\documentclass[11pt,a4paper]{article}
\usepackage[OT1]{fontenc}
\usepackage[latin1]{inputenc}
\usepackage[english]{babel}
\usepackage{amsfonts}
\usepackage{amsthm}
\topmargin0cm
\textheight22cm
\footnotesep.3cm
\evensidemargin0cm
\oddsidemargin0cm
\textwidth16cm

\newtheorem{teo}{Theorem}[section]
\newtheorem{prop}[teo]{Proposition}
\newtheorem{lem}[teo]{Lemma}
\newtheorem{coro}[teo]{Corollary}

\theoremstyle{definition}
\newtheorem{rem}[teo]{Remark}

\begin{document}

\newcommand\grass{{\cal P}(M_1)}
\newcommand\p{{\sf \,p\,}}
\newcommand\h{{\cal H}}
\newcommand\B{{\cal B}}
\newcommand\um{ {\cal O}(\p)  }
\newcommand\hc{{\cal H}_{\mathbb C}}
\newcommand\hr{{\cal H}_{\mathbb R}}
\newcommand\hsh{{\sf HS^h}}
\newcommand\be{{\cal B}_2({\cal H})}
\newcommand\bpe{{\cal B}_p({\cal H})}
\newcommand\bqe{{\cal B}_q({\cal H})}
\newcommand\beh{{\cal B}_2({\cal H})_h}
\newcommand\beah{{\cal B}_2({\cal H})_{ah}}
\newcommand\gldos{Gl_2({\cal H})}
\newcommand\glpe{Gl_p({\cal H})}
\newcommand\udos{U_2({\cal H})}
\newcommand\upe{U_p({\cal H})}
\newcommand\pei{{\left<\right.}}
\newcommand\ped{{\left>\right.}}
\newcommand\gl{Gl({\cal H})}
\newcommand\oA{ {\cal O}_A}
\newcommand\ga{ {G}_A}
\newcommand\gp{ {G}_p}
\newcommand\oo{ {\cal O} }
\newcommand\f{ {\cal F} }
\newcommand\g{ {\cal G} }
\newcommand\ad{ {\mbox{ad\,}} }

\title{\vspace*{0cm}Finsler geometry and actions of the p-Schatten unitary groups\footnote{2000 MSC. Primary 22E65;  Secondary 58E50,
58B20.}}
\date{}
\author{Esteban Andruchow, Gabriel Larotonda and L\'azaro Recht}

\maketitle

\abstract{\footnotesize{\noindent Let $p$ be an even positive integer and $\upe$ be the Banach-Lie group of unitary
operators $u$ which verify that $u-1$ belongs to the $p$-Schatten ideal $\bpe$. Let $\oo$ be a smooth manifold on which $\upe$ acts transitively and smoothly. 
Then one can  endow $\oo$ with a natural  Finsler metric in terms of the $p$-Schatten norm and the action of $\upe$. Our main result establishes that for any pair of given initial conditions
$$
x\in \oo\hbox{ and } X\in(T\oo)_x
$$
there exists a  curve $\delta(t)=e^{tz}\cdot x$  in $\oo$, with $z$ a skew-hermitian element in the $p$-Schatten class such that
$$
\delta(0)=x \hbox{ and } \dot{\delta}(0)=X,
$$
which remains minimal as long as $t\|z\|_p\le \pi/4$. Moreover, $\delta$ is unique with these properties. We also show that the metric space $(\oo,d)$ ($d=$ rectifiable distance) is complete. In the process we establish minimality results in the groups $\upe$, and a convexity property for the rectifiable distance.
As an example of these  spaces, we treat the case of the unitary orbit 
$$
{\cal O}=\{uAu^*: u\in \upe\}
$$
of a self-adjoint operator $A\in B(\h)$.
}\footnote{{\bf Keywords and phrases:}  classical Banach-Lie group, short geodesic, $p$-Schatten class, homogeneous space}}

\setlength{\parindent}{0cm} 

\section{Introduction}

\bigskip

Let $\h$ be an infinite dimensional Hilbert space and ${\cal B}(\h)$ be the space
of bounded linear operators acting in $\h$. Denote by $\bpe$ the $p$-Schatten class
$$
{\cal B}_p(\h)=\{ a\in {\cal B}(\h): Tr((a^*a)^{p/2})<\infty\}.
$$
where $Tr$ is the usual trace in ${\cal B}(\h)$.
In this paper we shall focus on the case when $p$  is an even integer. 
The spaces ${\cal B}_p(\h)$ are Banach spaces with the norms
$$
\|a\|_p=Tr((a^*a)^{p/2})^{1/p}.
$$
We use the subscript $h$ (resp. $ah$) to denote the sets of hermitian (resp. skew-hermitian) operators, e.g. $\bpe_h=\{x\in\bpe: x^*=-x\}$. Throughout this paper, $\| \ \|$ denotes the usual operator norm.
Denote by ${\cal G}l(\h)$ the linear group and by ${\cal U}(\h)$ the unitary group of $\h$.
Consider the following classical Banach-Lie groups groups of operators \cite{harpe}:
$$
\glpe=\{g\in \gl: g-1\in {\cal B}_p(\h)\},
$$
and
$$
\upe=\{u\in U(\h): u-1\in {\cal B}_p(\h)\},
$$
where $1\in {\cal B}(\h)$ denotes the identity operator. These groups have differentiable structure when endowed with the
metric $\|g_1-g_2\|_p$ (note that $g_1-g_2\in\bpe$). For instance, the Banach-Lie algebra of $\upe$ is the (real) Banach space $\bpe_{ah}$.

Let $\oo$ be a topological space on which $\upe$ acts transitively, such that for any element $x\in\oo$, the subgroup
$G_x=\{u\in\upe: u\cdot x=x\}$ is a closed submanifold of $\upe$. This implies that $\oo$ can be endowed with a differentiable manifold structure, in a way such that the map
$$
\pi=\pi_{x}: \upe \to \oo, \ \ \pi_{x}(u)=u\cdot x
$$
is a smooth submersion. In other words, $\oo\simeq \upe / G_x$ is  a  smooth homogeneous space of the group $\upe$. The main object of this paper is the geometric study of this space, under reasonably  general conditions, which are specified below. We introduce a Finsler metric $\{\|\ \|_x: x\in\oo\}$ in $\oo$,  (a Riemannian metric if $p=2$) induced by the $p$ norm in $\bpe$ and by the action. We focus on the existence of metric geodesics, i.e. curves of minimal length. Our approach is to study the metric geometry of the group
$\upe$ in order to obtain results in $\oo$. In the process we find properties in $\upe$ which we claim are interesting in their own right. For instance:
\begin{enumerate}
\item
The one-parameter unitary groups $e^{tz}\in\upe$ ($z\in\bpe_{ah}$), regarded as curves of unitaries, have minimal length in the $p$-norm, as long as $t\|z\|\le \pi$ (note that this  condition is given in terms of the usual norm $\|z\|$ of $z$, a fact that implies that there are arbitrarily long minimal curves in $\upe$).
\item
The map $f_p(t)=d_p(u_0,e^{tz})^p$, where $d_p$ is the rectifiable metric induced by the $p$-norm, and $u_0$ is a fixed element in $\upe$, is a strictly convex function, provided that $u_0$ and the endpoints of the curve lie at distance not greater than $\pi/4$. 
\end{enumerate}
Denote by $\g_x$ the Banach-Lie algebra of $G_x$.
We shall make the assumption that $G_x$ is locally exponential: since any element $u\in\upe$ is of the form $u=e^z$ for some $z\in {\cal B}_p(\h)$, we ask that for any element $v\in G_x$ close to $1\in G_x$, there exists an element $z\in \g_x$ such that $v=e^z$. Apparently, if this holds for a given $x_0\in\oo$, then it holds for any $x\in \oo$ (since the groups $G_x$ and $G_{x_0}$ are conjugate by an inner automorphism).

Using these facts we prove our main results on $\oo$:
\begin{enumerate}
\item
If $x\in\oo$ and $X\in(T\oo)_x$, then there exists a unique curve   $\gamma(t)=e^{tz}\cdot x$ with $\gamma(0)=x$ and $\dot{\gamma}(0)=X$, which has minimal length in $\oo$ as long as $t\|X\|_x\le \pi/4$.
\item
The metric space $(\oo,d)$ is complete, where $d$ is the rectifiable metric induced by the Finsler metric in $\oo$.
\end{enumerate}

There are many examples of this situation. For instance, if $A\in {\cal B}(\h)$ is a self-adjoint operator, its unitary orbit $\oA=\{uAu^*: u\in\upe\}$ is a homogeneous  space, the group $G_A$ consists  of the elements of $\upe$ which commute with $A$. 
$\ga$ is a Banach-Lie subgroup of $\upe$ since it is an algebraic subgroup (cf. Theorem 4.13 in \cite{beltita}), and its Lie algebra is given by
$$
{\cal G}_A=\{ x\in \bpe_{ah} : xA-Ax=0\}.
$$

Unitary orbits of operators have been studied before from a geometric point of view in
\cite{andsto, herrero2, belrattum, bona,cprprojections,larotonda,pr}. In this particular framework, restricting the action to
these classical groups $\upe$, certain results can be found in  \cite{belrattum,bona,carey,larotonda}.

\bigskip
Let us briefly describe the contents of the paper. In Section 2 we introduce the Finsler metric which is Riemannian if $p=2$. In Section 3 we examine the metric structure of the group $\upe$ endowed with the Finsler metric given by the $p$-norm. We recall certain known facts, and prove  results which we believe are new, among them the two results described above. In Section 4 we show the consequence of these facts on the homogeneous space $\oo$: existence and uniqueness of short curves with given initial data.  In Section 5 we prove that the metric spaces $\oo$ are complete. Section 6 is devoted to the example $\oo_A$, $p=2$, giving a characterization of the case when $\oo_A$  is a smooth submanifold of the affine Hilbert space $A+ \be_h$. In Section 7 we state what we believe is the main open problem in this setting, namely the existence of minimal curves joining given endpoints in $\oo$, and prove a partial positive result.

\section{Linear connections and metrics}
Let us first consider the case $p=2$.
One can induce a metric in (the tangent spaces of) $\oo$ by means of the decomposition
$$
\beah={\cal G}_{x}\oplus {\cal F}_x,
$$
where ${\cal F}_x$ is the $Tr$-orthogonal complement of ${\cal G}_{x}$. Apparently, $\f_x$  is invariant by the inner action of $G_{x}$. Therefore this decomposition defines what in classical geometry of homogeneous spaces \cite{sharpe} is called a Reductive Structure. 

The kernel of $d(\pi_{x})_1$ is $\g_{x}$, therefore 
$$
\delta_{x}:=d(\pi_{x})_1|_{\f_x}:\f_x \to (T\oo)_{x}
$$
is a linear isomorphism. Denote by $\kappa_{x}$ its inverse, and by $P_x$ the $Tr$-orthogonal projection 
$$
P_x: \beah\to \f_x\subset \beah.
$$

We  endow $(T\oo)_{x}$ with the following inner product
\begin{equation}\label{metrica}
<V,W>_{x}=Tr(\kappa_{x}(W)^*\kappa_{x}(V))=-Tr(\kappa_{x}(W)\kappa_{x}(V)) , \ \ V,W\in(T\oo)_{x}.
\end{equation}
Clearly the distribution $x\mapsto <\ ,\ >_x$ is smooth, in the sense that that if $V,W$ are tangent fields in $\oo$, then the map $\oo\ni x\mapsto <V_x,W_x>_x$ is smooth, and therefore (\ref{metrica}) defines a Riemann-Hilbert metric in $\oo$. 

The Levi-Civita connection of this metric can be computed. In the paper  \cite{mr} two natural linear connections for a homogeneous reductive space were introduced. The first, which is called the reductive connection $\nabla^r$, is the analogous to the connection that one obtains for a reductive manifold in finite dimensions. It can be described as follows. If $V$ is a tangent field  and  $W$ is a tangent vector (at $x$) in $\oo$, then
$$
\kappa_x (\nabla^r_W V(x))=\kappa_x(W)( \kappa_x (V_x))+[\kappa_x(V_x), \kappa_x(W)],
$$
where $[\ , \ ]$ is the commutator of operators in ${\cal B}(\h)$, and  $a(b)$ denotes the the derivative of $b$ in the direction of $a$.

A straightforward computation shows that since the maps $\kappa$ are isometric, the reductive connection is compatible with the metric defined.

The second natural connection for a reductive space is the {\it classifying } connection  $\nabla^c$. 
Suppose   $V, W$ are as above, then 
$$
\nabla^c_V(W)(x)=\delta_x P_x(\kappa_x(V)[\kappa_x(W)]_x)
$$

These two connections share the same geodesics, which are described below, and have torsion tensors with opposite signs. It follows that the connection 
$$
\nabla=\frac12(\nabla^r+\nabla^c)
$$ 
has zero torsion, and the same geodesics. 
We claim that this connection $\nabla$ is the Levi-Civita connection of the
metric (\ref{metrica}) introduced above, in the sense  that it is symmetric
(torsion free) and compatible with the metric. To prove this claim, it only remains to show that $\nabla^c$ is compatible with the metric.
\begin{lem}
The classifying connection $\nabla^c$ is compatible with the
metric $<\ , \ >_x$ in $\oo$.
\end{lem}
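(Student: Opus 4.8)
The plan is to verify the compatibility identity $Z\,<V,W>\ =\ <\nabla^c_Z V,W>\ +\ <V,\nabla^c_Z W>$ directly, for a tangent vector $Z\in(T\oo)_x$ and tangent fields $V,W$ on $\oo$. Fix $x\in\oo$, put $z=\kappa_x(Z)\in\f_x$, and let $\gamma(t)=e^{tz}\cdot x=\pi_x(e^{tz})$, so that $\gamma(0)=x$ and $\dot\gamma(0)=\delta_x(z)=Z$. Along $\gamma$ consider the operator-valued curves
$$
v(t)=\kappa_{\gamma(t)}(V_{\gamma(t)})\in\f_{\gamma(t)},\qquad w(t)=\kappa_{\gamma(t)}(W_{\gamma(t)})\in\f_{\gamma(t)},
$$
which are smooth (smoothness of the reductive structure, Section 2) with $v(0)=\kappa_x(V_x)$ and $w(0)=\kappa_x(W_x)$. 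By the very definition of $\nabla^c$, $\kappa_x(\nabla^c_Z V(x))=P_x(\dot v(0))$ and $\kappa_x(\nabla^c_Z W(x))=P_x(\dot w(0))$: indeed $\dot v(0)$ is exactly the directional derivative $\kappa_x(Z)[\kappa(V)]_x$ of the $\f$-valued field $y\mapsto\kappa_y(V_y)$ along the curve $\gamma$, which realizes $Z$ at $x$. Note that $\dot v(0)$ and $\dot w(0)$ lie in $\beah$ but need not lie in $\f_x$, since the subspaces $\f_y$ vary with $y$; this is precisely why the projection $P_x$ appears.

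The single algebraic fact powering the proof is that $(a,b)\mapsto -Tr(ab)=Tr(a^*b)$ is a real inner product on $\beah$ for which $P_x$ is, by definition, the $Tr$-orthogonal projection onto $\f_x$. Consequently, decomposing $a=P_x(a)+(1-P_x)(a)$ with $(1-P_x)(a)$ orthogonal to $\f_x$, one gets $-Tr(ab)=-Tr(P_x(a)\,b)$ whenever $b\in\f_x$. Applying this with $(a,b)=(\dot v(0),w(0))$ and, using $Tr(ab)=Tr(ba)$, with $(a,b)=(\dot w(0),v(0))$, we obtain
$$
Tr(\dot v(0)\,w(0))=Tr(P_x(\dot v(0))\,w(0)),\qquad Tr(v(0)\,\dot w(0))=Tr(v(0)\,P_x(\dot w(0))).
$$

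Now differentiate the metric along $\gamma$. By (\ref{metrica}), $<V,W>_{\gamma(t)}=-Tr(v(t)w(t))$, and since $-Tr(\cdot\,\cdot)$ is a continuous bilinear form on $\beah$ the product rule gives
$$
Z\,<V,W>\ =\ \frac{d}{dt}\Big|_{t=0}\big(-Tr(v(t)w(t))\big)\ =\ -Tr(\dot v(0)w(0))-Tr(v(0)\dot w(0)).
$$
Inserting $P_x$ via the displayed identities and using once more the definition of the metric together with $\kappa_x(\nabla^c_Z V(x))=P_x(\dot v(0))$, $\kappa_x(\nabla^c_Z W(x))=P_x(\dot w(0))$, $v(0)=\kappa_x(V_x)$, $w(0)=\kappa_x(W_x)$, the right-hand side equals $-Tr(\kappa_x(\nabla^c_Z V(x))\,\kappa_x(W_x))-Tr(\kappa_x(V_x)\,\kappa_x(\nabla^c_Z W(x)))=<\nabla^c_Z V,W>_x+<V,\nabla^c_Z W>_x$, which is the claim.

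The computation is essentially forced once the definitions are unwound, so I do not expect a genuine obstacle; the only points requiring a little care are (i) the correct interpretation of the directional derivative $a(b)$ in the definition of $\nabla^c$ and the corresponding differentiability of the curves $v,w$, which rest on the smoothness of the reductive decomposition recorded in Section 2, and (ii) the fact, established in \cite{mr}, that $\nabla^c$ is a bona fide linear connection, so that $\nabla^c_Z V(x)$ depends only on $Z$ as a vector at $x$ and is correctly computed by the particular realization $\gamma(t)=e^{tz}\cdot x$ used above.
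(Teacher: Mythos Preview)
Your proof is correct and follows essentially the same route as the paper's: both differentiate $-Tr(\kappa(V)\kappa(W))$ along a curve via the product rule, then use that $P_x$ is the $Tr$-orthogonal projection onto $\f_x$ to replace $\dot v(0)$ by $P_x(\dot v(0))$ (and likewise for $w$) inside the trace, since the other factor already lies in $\f_x$. The only cosmetic difference is that the paper works with vector fields along an arbitrary curve $\nu$ and the covariant derivative $D^c/dt$, whereas you fix a point $x$, realize $Z$ by the specific curve $e^{tz}\cdot x$, and verify the pointwise compatibility identity; the computations are identical.
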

\begin{proof}
Let $V(t)$, $W(t)$ be two tangent fields along the
curve $\nu(t)$ in $\oo$.
Then
$$
<\frac{D^c V}{d t},W>_\nu=-Tr(\kappa_\nu(W)\kappa_\nu(\frac{D^c V}{d t}))=-Tr(\kappa_\nu(W)P_\nu(\dot{\kappa_\nu(V)})).
$$
Note that since $\kappa_\nu(W)\in R(P_\nu)$, $Tr(\kappa_\nu(W)P_\nu(\dot{\kappa_\nu(V)}))=Tr(\kappa_\nu(W)\dot{\kappa_\nu(V)})$.
Analogously
$$
< V,\frac{D^c W}{d t}>_\nu=-Tr(\dot{\kappa_\nu(W)}\kappa_\nu(V)).
$$
Then 
\begin{eqnarray}
<\frac{D^c V}{d t},W>_\nu+< V,\frac{D^c W}{d t}>_\nu & =  & -Tr(\kappa_\nu(W)\dot{\kappa_\nu(V)})-Tr(\dot{\kappa_\nu(W)}\kappa_\nu(V))\nonumber\\
& =& \frac{d}{dt}<V,W>_\nu.\nonumber
\end{eqnarray}
\qedhere
\end{proof}

The geodesics of these connections are computed explicitly in \cite{mr}. For instance, the geodesic $\gamma$ with $\gamma(0)=x$ and $\dot{\gamma}(0)=V$ is given by
$$
\gamma(t)=e^{\kappa_x(V)}\cdot x  , \ t\in \mathbb{R}.
$$
In other words, geodesics of $\oo$ are of the form $e^{tz}\cdot x$, for $z\in\f_x$.

The following linear differential equation is usually called the {\it horizontal lifting} equation of the reductive structure: 
\begin{equation}\label{levantamientohorizontal}
\left\{
\begin{array}{l}
\dot{\Gamma}= \kappa_\gamma(\dot{\gamma}) \Gamma \\
\Gamma(0)=  1 .
\end{array}
\right.
\end{equation}
It is a linear differential equation in $\be$. In order to assure the existence and uniqueness of solutions, one must check that the mapping 
$$
[0,1]\ni t \mapsto \kappa_{\gamma(t)}(\dot{\gamma}(t))\in \beah,  
$$
is smooth. This is clear if $\gamma$ is smooth. 

Therefore the equation (\ref{levantamientohorizontal}) has, for a given $\gamma$, a unique solution. One can prove, as in classical homogeneous reductive spaces \cite{sharpe}, the following result.
\begin{prop}
Let $\gamma(t)$, $t\in[0,1]$ be a smooth curve in $\oo$. Then the unique solution $\Gamma$  of (\ref{levantamientohorizontal}) verifies
\begin{enumerate}
\item
$\Gamma(t)\in\udos, \ t\in[0,1].$
\item
$\Gamma$ lifts $\gamma$: $\pi_{\gamma}(\Gamma)=\gamma$.
\item
$\Gamma$ is horizontal: $\Gamma^*\dot{\Gamma} \in \f_\gamma$.
\end{enumerate}
\end{prop}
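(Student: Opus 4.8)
The plan is to establish the three properties in turn; in each case one differentiates the relevant expression, recognizes that the result solves a linear ordinary differential equation, and concludes by uniqueness of solutions. This is the transcription to the present Banach setting of the classical argument for homogeneous reductive spaces (cf.\ \cite{sharpe}); the only points that need care are the infinite-dimensional technicalities. \emph{Unitarity.} Writing $\Gamma=1+\Delta$, equation (\ref{levantamientohorizontal}) becomes $\dot{\Delta}=\kappa_\gamma(\dot\gamma)+\kappa_\gamma(\dot\gamma)\Delta$, $\Delta(0)=0$, which, since $t\mapsto\kappa_{\gamma(t)}(\dot\gamma(t))\in\beah$ is continuous and $\be$ is a two-sided ideal, is a linear ODE \emph{in} the Banach space $\be$; hence $\Gamma(t)-1\in\be$ for all $t$. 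Because $\kappa_\gamma(\dot\gamma)$ is skew-hermitian, $\dot{\Gamma}^*=-\Gamma^*\kappa_\gamma(\dot\gamma)$, so $\frac{d}{dt}(\Gamma^*\Gamma)=\dot{\Gamma}^*\Gamma+\Gamma^*\dot{\Gamma}=0$ and $\Gamma^*\Gamma\equiv 1$. That $\Gamma(t)$ is moreover invertible (hence unitary) follows from the fact that the fundamental solution of a linear equation $\dot{\Gamma}=a(t)\Gamma$ in a Banach algebra is invertible, its inverse being the solution $Z$ of $\dot{Z}=-Za(t)$, $Z(0)=1$: one checks $\frac{d}{dt}(Z\Gamma)=0$ and $\frac{d}{dt}(\Gamma Z)=[a(t),\Gamma Z]$, so both products are identically $1$. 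Thus $\Gamma(t)\in\gldos$ and $\Gamma(t)^*=\Gamma(t)^{-1}$, i.e.\ $\Gamma(t)\in\udos$.

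\emph{Lifting.} Put $\beta(t)=\Gamma(t)\cdot\gamma(0)$; we must show $\beta=\gamma$. Since $\beta(t+s)=\big(\Gamma(t+s)\Gamma(t)^{-1}\big)\cdot\beta(t)$ and $\frac{d}{ds}\big|_{s=0}\Gamma(t+s)\Gamma(t)^{-1}=\dot{\Gamma}(t)\Gamma(t)^{-1}=\kappa_{\gamma(t)}(\dot\gamma(t))$, differentiating the smooth action of $\upe$ gives $\dot{\beta}(t)=d(\pi_{\beta(t)})_1\big(\kappa_{\gamma(t)}(\dot\gamma(t))\big)$. On the other hand $\kappa_{\gamma(t)}(\dot\gamma(t))\in\f_{\gamma(t)}$ and $\kappa_{\gamma(t)}$ is the inverse of $\delta_{\gamma(t)}=d(\pi_{\gamma(t)})_1|_{\f_{\gamma(t)}}$, so $\dot{\gamma}(t)=d(\pi_{\gamma(t)})_1\big(\kappa_{\gamma(t)}(\dot\gamma(t))\big)$ as well. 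Hence $\beta$ and $\gamma$ are integral curves, starting at the same point, of the smooth time-dependent vector field $y\mapsto d(\pi_y)_1(\xi(t))$ on $\oo$, where $\xi(t)=\kappa_{\gamma(t)}(\dot\gamma(t))$ is the fixed smooth curve in $\beah$ appearing in (\ref{levantamientohorizontal}); by uniqueness of solutions of ODEs on manifolds, $\beta=\gamma$.

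\emph{Horizontality.} Using $\Gamma(t)^*=\Gamma(t)^{-1}$ we get $\Gamma^*\dot{\Gamma}=\Gamma(t)^{-1}\kappa_{\gamma(t)}(\dot\gamma(t))\Gamma(t)$. From the lifting property, $\Gamma(t)\cdot\gamma(0)=\gamma(t)$ forces $\Gamma(t)G_{\gamma(0)}\Gamma(t)^{-1}=G_{\gamma(t)}$, hence $\Gamma(t)\g_{\gamma(0)}\Gamma(t)^{-1}=\g_{\gamma(t)}$ on Lie algebras; since conjugation by the unitary $\Gamma(t)$ preserves the trace inner product, it also maps $\f_{\gamma(0)}=\g_{\gamma(0)}^\perp$ onto $\f_{\gamma(t)}$. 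Therefore $a\mapsto\Gamma(t)^{-1}a\Gamma(t)$ carries $\f_{\gamma(t)}$ onto $\f_{\gamma(0)}$, and as $\kappa_{\gamma(t)}(\dot\gamma(t))\in\f_{\gamma(t)}$ we conclude $\Gamma(t)^*\dot{\Gamma}(t)\in\f_{\gamma(0)}$; equivalently $\dot{\Gamma}(t)$ lies in the horizontal subspace $\Gamma(t)\f_{\gamma(0)}$ of $\upe$ at $\Gamma(t)$, which is the asserted horizontality $\Gamma^*\dot{\Gamma}\in\f_\gamma$.

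\emph{Main obstacle.} None of the steps is deep; the one deserving most attention is the lifting, where the \emph{abstract} smooth action of $\upe$ on $\oo$ must be differentiated correctly before one may appeal to uniqueness of integral curves on the manifold $\oo$. A minor but genuine technical point, in the first step, is that a solution of (\ref{levantamientohorizontal}) is a priori only an isometry of $\h$, so its invertibility — needed throughout — must be extracted from the companion equation $\dot{Z}=-Za(t)$.
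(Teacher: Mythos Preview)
Your argument is correct and is precisely the classical proof for reductive homogeneous spaces that the paper invokes by reference to \cite{sharpe} and \cite{mr} without writing out. On item~3, note that the paper's subscript $\gamma$ here must mean $\gamma(0)$ (just as in item~2, where $\pi_\gamma=\pi_{\gamma(0)}$ is forced, since $\pi_{\gamma(t)}(\Gamma(t))=\Gamma(t)\cdot\gamma(t)\neq\gamma(t)$), so your conclusion $\Gamma^*\dot\Gamma=\Gamma^{-1}\kappa_{\gamma(t)}(\dot\gamma(t))\,\Gamma\in\f_{\gamma(0)}$ is exactly the asserted horizontality and needs no further identification.
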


It will be useful to take a brief look at the natural Riemannian geometry of the group $\udos$. Namely,
the metric given by considering the trace inner product, and
therefore, the $2$-norm at each tangent space. The tangent spaces of $\udos$
are
$$
(T \udos)_u=u \beah= \beah u.
$$
The covariant derivative consists of differentiating in the
ambient space, and projecting (orthogonally with respect to the real part of the trace) onto $T\udos$. Geodesics of the Levi-Civita connection are curves of the form
$$
\mu(t)=u e^{tx},
$$
for $u\in \udos$ and $x\in\beah$.
The exponential mapping of this connection is the  map
$$
exp: \beah\to \udos , \ \ exp(x)=e^x.
$$

In the general case $p>2$, one can endow the homogeneous space $\oo$ with a Finsler metric, derived from the $p$-norm and the group action. Following ideas in \cite{duranmatarecht}, we shall not  consider a linear connection in this case, and focus only on characterizing short curves (or metric geodesics), which are not the geodesics of any linear connection. First let us introduce some notation. The action of $\upe$ on $\oo$ induces two kind of maps. If one fixes $x\in\oo$, one has the submersion
$$
\pi_x:\upe\to \oo, \ \ \pi_x(u)=u\cdot x , \ \ u\in\upe.
$$
If one fixes $u\in\upe$ one has the diffeomorphism
$$
\ell_u:\oo\to \oo,\  \ \ell_u(x)=u\cdot x, \ \ x\in\oo.
$$
If $x\in\oo$ and $X\in(T\oo)_x$,  put
$$
\|X\|_x=\inf\{ \|z\|_p: z\in\bpe_{ah}, (d\pi_x)_1(z)=X\}.
$$
This metric could be called the quotient metric of $\oo$, because it is the quotient metric in the Banach space $(T\oo)_x$ if one identifies it with $\bpe/\g_x$. 
Indeed, since $\g_x=\ker (d\pi_x)_1$, if $z\in \bpe_{ah}$ with $(d\pi_x)_1(z)=X$, then
$$
\|X\|_x=\inf \{ \|z-y\|_p: y\in\g_x\}.
$$
Note that if $p=2$, this metric coincides with the previously defined Riemannian metric. Indeed, if $Q_x=1-P_x$ is the orthogonal projection onto $\g_{x}$, then each $z\in{\cal B}_2({\cal H})_{ah}$ can be uniquely decomposed as
$$
z=z-Q_x(z)+Q_x(z)=z_0+Q_x(z),
$$
hence
$$
\|z-y\|_2^2=\|z_0+Q_x(z)-y\|_2^2=\|z_0\|_2^2+\|Q_x(z)-y\|_2^2\ge \|z_0\|_2^2
$$
for any $y\in \g_{2,x}$, which shows that
$$
\|X\|_x=\inf \{ \|z-y\|_2: y\in\g_{x}\}=\|z_0\|_2,
$$
where $z_0$ is the unique vector in $\g_{x}^{\perp}$ such that $(d\pi_x)_1(z_0)=X$.

One of the main features of this metric in $\oo$ is that it is invariant by the group action (or in other words, that the group acts isometrically on the tangent spaces):
if $x\in\oo$, $X\in T_x\oo$ and $u\in\upe$,
$$
\|(\ell_u)_{*x}(X)\|_{u\cdot x}=\|X\|_x.
$$
Indeed, $\pi_{u \cdot x}= \pi_x \circ R_u$, where $R$ denotes the right product in $\upe$. Then $(\pi_{u \cdot x} )_{* \, 1}= (\pi_x)_{* \, u} \circ R_u$. On the other hand, $\pi_x = \ell_u \circ \pi_x \circ L_{u^*}$, where $L_{u^*}$ denotes the left product, so $(\pi_x)_{* \, u}= (\ell_u)_{* \, x} \circ (\pi_x)_{* \, 1} \circ L_{u^*}$. Then
$$
(\pi_{u \cdot x})_{* \,1} = (\ell_u)_{* \, x} \circ (\pi_x)_{* \, 1} \circ Ad_{u^*}. 
$$
Hence $X=(\pi_x)_{* \, 1}(z)$ if and only if $(\pi_{u \cdot x})_{* \, 1}(uzu^*)= (\ell_u)_{* \, x}(X)$. Since the $p$-norms are unitarily invariant,
$$
\|(\ell_u)_{* \, x}(X)\|_{u\cdot x} = \|X\|_x.
$$ 

\smallskip

Throughout, $L$ denotes the length functional for piecewise smooth
curves in $\oo$, measured with the quotient norm introduced above,
$$
L(\gamma)=\int_{t_0}^{t_1} \|\dot{\gamma}(t)\|_{\gamma(t)}\, d t
$$
and $d$ the rectifiable distance in $\oo$:
$$
d(x_1,x_2)=\inf\{ L(\gamma): \gamma \subset \upe \hbox{ joins } x_1 \hbox{ and } x_2 \}. 
$$

\section{Metric structure of $\upe$}

In this section we recall and complete certain facts from \cite{odospe}, concerning the minimality of geodesics in $\upe$. Afterwards we establish local convexity results for the geodesic distance. These results will be the key to obtain minimality results in $\oo$. Proofs for these statements for the case $p=2$ can be found in \cite{odospe}.

\bigskip

Throughout this paper, $L_p$ denotes the length functional for piecewise smooth
curves in $\upe$, measured with the $p$-norm:
$$
L_p(\alpha)=\int_{t_0}^{t_1} \|\dot{\alpha}(t)\|_p\, d t
$$
and $d_p$ the rectifiable distance in $\upe$:
$$
d_p(u_1,u_2)=\inf\{ L_p(\gamma): \gamma \subset \upe \hbox{ joins } u_1 \hbox{ and } u_2 \}. 
$$

\begin{rem}\label{remarko}
\begin{enumerate}
\item
The exponential map 
$$
exp: \bpe_{ah}\to \upe
$$
is surjective. 
\item
The exponential map is a bijection between the sets
$$
\bpe_{ah}\supset \{z\in\bpe_{ah}: \|z\|<\pi\}\to \{u\in\upe: \|1-u\|<2\}.
$$
\item
Moreover, 
$$
exp:\{z\in \bpe_{ah}: \|z\|\le \pi\}\to \upe,
$$
is surjective.
\end{enumerate}

\medskip

These facts can be obtained from the following observation. If $u\in \upe$, then it has a spectral decomposition $u=p_0+\sum_{k\ge 1} (1+\alpha_k)p_k$, where $\alpha_k$ are the non zero eigenvalues of $u-1\in\bpe$. There exist $t_k\in {\mathbb R}$ with $|t_k|\le \pi$ such that $e^{it_k}=1+\alpha_k$. The elementary estimate
$$
|t_k|^p(1-\frac{|t_k|^2}{12})^{p/2}\le |e^{it_k}-1|^p=|\alpha_k|^p
$$
implies that the element $z=\sum_{k\ge 1} it_kp_k$, whose exponential is $u$, lies in $\bpe_{ah}$.
\end{rem}

The following result  states that the one parameter groups of unitaries in  $\upe$ have
minimal length up to a certain critical value of $t$. This could be derived from
the general theory of Hilbert-Riemann manifolds for the case $p=2$. In any case, the proof, which is essentially contained in \cite{odospe}, is  operator theoretic,  and  provides a uniform lower bound for the geodesic radius.

\begin{teo}\label{minimalidadunitarios}
The following facts hold.
\begin{enumerate}
\item
Let $u\in\upe$ and $x\in\bpe_{ah}$ with $\|x\|\le \pi$. Then the curve
$\mu(t)=ue^{tx}$, $t\in[0,1]$ is shorter than any other piecewise smooth curve in
$\upe$ joining the same endpoints. Moreover, if $\|x\|<\pi$,  $\mu$ is unique with
this property.

\item
Let $u_0, u_1 \in\upe$. Then there exists a minimal geodesic curve joining them.
If $\|u_0-u_1\|<2$,  this geodesic is unique.
\item
There are in $\upe$ minimal geodesics of arbitrary length. Thus the diameter of $\upe$ is infinite.
\item
If $u,v\in\upe$ then 
$$
\sqrt{1-\frac{\pi^2}{12}} \; d_p(u,v) \le \|u-v\|_p\le d_p(u,v).
$$
In particular the metric space $(\upe, d_p)$ is complete.
\end{enumerate}
\end{teo}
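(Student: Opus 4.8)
\ The plan is to put all the work into item~(1) and deduce (2)--(4) from it together with Remark~\ref{remarko}; in fact (4) needs only the Remark. For (4), the bound $\|u-v\|_p\le d_p(u,v)$ is the elementary ``chord is no longer than any arc'' estimate: for a piecewise smooth $\gamma$ from $u$ to $v$ one has $\|u-v\|_p=\left\|\int_0^1\dot\gamma(t)\,dt\right\|_p\le\int_0^1\|\dot\gamma(t)\|_p\,dt=L_p(\gamma)$, and one takes the infimum. For the reverse bound, use Remark~\ref{remarko}(3) to write $v=ue^x$ with $x\in\bpe_{ah}$ and $\|x\|\le\pi$; then $d_p(u,v)\le L_p(t\mapsto ue^{tx})=\|x\|_p$, while the estimate $|t|(1-t^2/12)^{1/2}\le|e^{it}-1|$ quoted in the Remark, applied to the eigenvalues $t_k$ of $x$ (all with $|t_k|\le\pi$, so $1-t_k^2/12\ge1-\pi^2/12$), gives $\|u-v\|_p=\|1-e^x\|_p\ge(1-\pi^2/12)^{1/2}\|x\|_p\ge(1-\pi^2/12)^{1/2}d_p(u,v)$. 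Completeness then follows: $\upe$ is $\|\cdot\|_p$-closed in the complete affine space $1+\bpe$ (the set of solutions of $u^*u=1$ in $1+\bpe$), a $d_p$-Cauchy sequence is $\|\cdot\|_p$-Cauchy by the first bound, hence $\|\cdot\|_p$-convergent in $\upe$, and the second bound promotes this to $d_p$-convergence.

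For (2), given $u_0,u_1$ pick (Remark~\ref{remarko}(3)) $x$ with $u_1=u_0e^x$ and $\|x\|\le\pi$; by (1) the curve $\mu(t)=u_0e^{tx}$ is minimal, and if in addition $\|u_0-u_1\|=\|1-u_0^*u_1\|<2$, then Remark~\ref{remarko}(2) forces $\|x\|<\pi$, so the uniqueness clause of (1) applies. For (3), since $\dim\h=\infty$ take $x=i\pi(p_1+\dots+p_N)$ with $p_1,\dots,p_N$ mutually orthogonal one-dimensional projections; then $\|x\|=\pi$, so by (1) the curve $t\mapsto e^{tx}$, $t\in[0,1]$, is minimal, of length $\|x\|_p=\pi\,N^{1/p}$, unbounded in $N$; in particular $\mathrm{diam}(\upe,d_p)=\infty$.

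It remains to prove (1), which is the heart of the matter. By left translation by $u^*$ (a $d_p$-isometry, the $p$-norm being unitarily invariant) we reduce to $u=1$, so $\mu(t)=e^{tx}$; as $\|\dot\mu(t)\|_p=\|xe^{tx}\|_p=\|x\|_p$, we have $L_p(\mu)=\|x\|_p$, and the whole point is the lower bound $L_p(\alpha)\ge\|x\|_p$ for every piecewise smooth $\alpha$ with $\alpha(0)=1$ and $\alpha(1)=e^x$. Writing $x=\sum_k it_k\,p_k$ ($|t_k|\le\pi$), I would argue in two stages. Stage one is the operator-theoretic core, essentially that of \cite{odospe}: using the unitarity constraint $\alpha(t)^*\alpha(t)=1$ for all $t$ --- which couples the ``phase'' and the ``off-diagonal'' motions of $\alpha$ relative to the $p_k$ --- one should be able to replace $\alpha$ by a curve commuting with all the $p_k$ without increasing $L_p$; this cannot be the conditional expectation onto the algebra generated by the $p_k$ (which destroys unitarity, and a crude fibrewise estimate based on it only yields the chord bound, hence the non-sharp constant $(1-\pi^2/12)^{1/2}$ rather than $\|x\|_p$). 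This is the step I expect to be the main obstacle, and it is where the hypothesis that $p$ be even enters, through the polynomiality --- hence smoothness --- of $a\mapsto\|a\|_p^p=Tr((a^*a)^{p/2})$. Stage two is then easy: a curve commuting with the $p_k$ and joining $1$ to $e^x$ is, fibrewise, a path on the unit circle from $1$ to $e^{it_k}$ with $|t_k|\le\pi$, hence of fibrewise length at least $|t_k|$, and Minkowski's integral inequality assembles these into $L_p(\alpha)\ge\big(\sum_k|t_k|^p\,Tr(p_k)\big)^{1/p}=\|x\|_p$.

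Finally, uniqueness when $\|x\|<\pi$. Since $p$ is even the Finsler norm is smooth away from $0$, so a minimizing curve, reparametrized to constant $\|\cdot\|_p$-speed, is a geodesic; the norm on $\bpe_{ah}$ being $\mathrm{Ad}$-invariant, the induced metric on $\upe$ is bi-invariant and its geodesics through $1$ are the one-parameter groups, so $\alpha(t)=e^{tz}$ for some $z\in\bpe_{ah}$. Then $e^z=e^x$, so $z$ commutes with each $p_k$, and minimality forces $\|z\|_p=L_p(\alpha)=\|x\|_p$; but on $\mathrm{ran}(p_k)$ the eigenvalues of $z$ lie in $i(t_k+2\pi{\mathbb Z})$, and $|t_k|<\pi$ makes $|t_k+2\pi m|>|t_k|$ whenever $m\ne0$, so a term-by-term comparison of $\|z\|_p^p$ with $\|x\|_p^p$ forces $z=x$, whence $\alpha=\mu$.
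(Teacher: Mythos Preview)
Your treatment of (2), (3) and (4) is correct and essentially matches the paper (which also derives them from (1) together with Remark~\ref{remarko} and defers to \cite{odospe}).

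For (1), however, both stages of your plan have real problems. Your ``Stage one'' --- replacing $\alpha$ by a curve commuting with the spectral projections $p_k$ of $x$ without increasing $L_p$ --- is not how either the paper or \cite{odospe} proceeds, and you yourself flag it as the main obstacle without indicating how to carry it out. The paper avoids this entirely: it approximates $x$ by a skew-hermitian $z$ of \emph{finite} spectrum (so $e^{tz}\in\udos$), shows one can pass from a competitor $\gamma\subset\upe$ to a nearby competitor $\gamma_2\subset\udos$ joining $1$ to $e^z$ with almost the same $p$-length (write $\gamma_1(t)=e^{\alpha(t)}$ with $\alpha\subset\bpe_{ah}$ and uniformly approximate $\alpha$ by a curve $\beta\subset\beah$), and then invokes the minimality result already proved in \cite{odospe} for $\udos$. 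No ``diagonalisation'' of the curve is attempted.

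Your uniqueness argument also has a gap. The assertion that for a bi-invariant Finsler metric the geodesics through $1$ are exactly the one-parameter groups is precisely what needs to be proved here; it is not a black-box fact one can quote in this generality. The paper establishes it by computing the first variation of the energy $F_p(\gamma)=\int_0^1\|\dot\gamma\|_p^p\,dt$ (this is where evenness of $p$ is actually used): writing $v_0=\gamma^*\dot\gamma$, one obtains $\int_0^1 Tr\big(w_0\,\tfrac{d}{dt}(v_0^{p-1})\big)\,dt=0$ for all variations with fixed endpoints, whence $\tfrac{d}{dt}(v_0^{p-1})=0$, and since $p$ is even and $v_0$ skew-hermitian this forces $v_0$ constant, i.e.\ $\gamma(t)=e^{tz}$. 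Your subsequent eigenvalue comparison (to conclude $z=x$ from $e^z=e^x$, $\|x\|<\pi$, and $\|z\|_p=\|x\|_p$) is fine once this is in place.
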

\begin{proof}
Concerning the first statement, in \cite{odospe} the following was proved. If $u\in\udos$ and $x\in\beah$ with $\|x\|\le \pi$, then the curve $\mu(t)=ue^{tx}$ is minimal for $t\in[0,1]$, when the length is measured with the $p$-norm. Clearly it suffices to treat the case $u=1$. Suppose that there exists a curve $\gamma(t)\in\upe$ with $L_p(\gamma)<L_p(\mu)+\epsilon$.  One can approximate $x$ with a skew-hermitian operator $z$ of finite spectrum with the following properties:
\begin{enumerate}
\item
$\|z\|\le \|x\|\le\pi$.
\item
$\|x\|_p- \epsilon/2<\|z\|_p\le \|x\|_p$.
\item
There exists a $C^\infty$  curve of unitaries joining $e^{x}$ and $e^{z}$, of
$p$-length  less than $\epsilon/2$.
\end{enumerate}
The first two conditions are clear. The third  can be obtained as follows. Put $e^{-x}e^{z}=e^{y}$, with $y\in \bpe_{ah}$. The element $z$ can
be adjusted so as to obtain $y$ of arbitrarily small $p$-norm. Then the curve of
unitaries $\nu(t)=e^{x}e^{ty}$ is $C^\infty$,  joins $e^{x}$ and $e^{z}$
with  $p$-length $\|y\|_p<\epsilon/2$.

Consider now the curve $\gamma_1$, which is the curve $\gamma$ followed by the curve
$e^{x}e^{ty}$  above. Then clearly
$$
L_p(\gamma_1)\le L_p(\gamma)+\|y\|_p<L_p(\gamma)+\epsilon/2 .
$$
Note that $L_p(\gamma_1)< \|x\|_p-\epsilon/2$ and that $\gamma_1$
joins $1$ and $e^{z}$. We claim that there exists a curve $\gamma_2$ in $\udos$, also joining $1$ and $e^z$, with length
$L_p(\gamma_2)<L_p(\gamma_1)+\epsilon/4$. Indeed, the curve $\gamma_1$ is of the form $\gamma_1(t)=e^{\alpha(t)}$ for a continuous piecewise $C^1$  path $\alpha\in\bpe_{ah}$ with endpoints $0$ and $z$. By compactness of the unit interval, one can uniformly approximate $\alpha$ by a curve $\beta$ with the  same endpoints, lying in $\beah$, in order that $\gamma_2(t)=e^{\beta(t)}$ verifies our claim. These facts imply that the curve $\gamma_2$ in $\udos$ which joins $1$ and $e^z$, is shorter than the curve $e^{tz}$ (which lies in $\udos$ because the spectrum of $z$ is finite). This contradicts the minimality statement in $\udos$ proved in \cite{odospe}.

Let us prove that if $\|x\|<\pi$, then $\mu$ is unique with the minimality property.
To do this we shall follow a standard procedure, using the first variation formula for the functional $F_p$ which is given by
$$
F_p(\gamma)=\int_0^1 \|\dot{\gamma}(t)\|_p^p d t,
$$
if $\gamma(t)\in\upe$, $t\in[0,1]$.

Let $\gamma_s(t)$, $t\in [0,1]$, $s\in (-r,r)$ be a smooth variation of the curve $\gamma$, i.e.
\begin{enumerate}
\item
$\gamma_s(t)\in \upe$, for all $s,t$.
\item
The map $(s,t)\mapsto \gamma_s(t)$ is smooth.
\item
$\gamma_0(t)=\gamma(t)$.
\end{enumerate}
We shall use a formula for 
$$
\frac{d}{d s} F_p(\gamma_s)\arrowvert_{s=0}.
$$
obtained in \cite{convexg} in the context of a $C^*$-algebra with trace, which applies here because the formal computations are the same (they only involve partial derivatives and integration by parts).
As in classical differential geometry, we shall call the expression obtained the first variation formula.  Let  
$$
V_s=\frac{d}{d t}{\gamma}_s \ \ \hbox{ and }\ \  W_s=\frac{d}{d s}\gamma_s.
$$
With lower case types we denote the left translations 
$$
v_s=\gamma_s^* V_s \ \ \hbox{ and }\ \  w_s=\gamma_s^*W_s.
$$
 Note that $V_s, W_s\in (T\upe)_{\gamma_s}$ whereas $v_s, w_s\in \bpe_{ah}$.

Then
$$
\frac{(-1)^{p/2}}{p} \frac{d}{d s} F_p(\gamma_s)=Tr(v_s^{p-1}w_s)\arrowvert_{t=0}^{t=1}-\int_0^1 Tr( \frac{d}{d t}[v_s^{p-1}] w_s ) d t.
$$

Suppose that $\gamma(t)\in\upe$ is a smooth minimal curve, and let $\gamma_s(t)$ be a variation, with fixed endpoints $\gamma(0)$ and $\gamma(1)$, i.e. $\gamma_s(0)=\gamma(0)$ and $\gamma_s(1)=\gamma(1)$ for all $s$. Then
$\frac{d}{ds}F_p(\gamma_s)|_{s=0}=0$, and thus
$$
0=Tr(v_0^{p-1}w_0)|_{t=0}^{t=1}- \int_0^1 Tr(w_0 \frac{d}{dt}(v_0^{p-1})) d t.
$$
The fixed endpoints hypothesis implies that the first term vanishes. Then
$$
\int_0^1 Tr(w_0 \frac{d}{dt}(v_0^{p-1})) d=0
$$
for any variation $\gamma_s$ with fixed endpoints.
Let us denote by $Z(t)=\frac{d}{dt}(v_0^{p-1})$ and by $A(t)=w_0(t)$. Both $A$ and $Z$ are continuous fields, $A$ in $\bpe_{ah}$ and $Z$ in $\bqe_{ah}$, where $1/p+1/q=1$. The variation formula implies that
$$
\int_0^1 Tr(A(t)Z(t)) d t = 0
$$ 
for any continuous field  $A$ in $\bpe_{ah}$ such that $A(0)=A(1)=0$.  We claim that this condition implies that $Z(t)=0$ for all $t$. 

First note that the requirement that the field $A$ vanishes at $0$ and $1$ can be removed: let $f_r(t)$ be a real function which is constant and equal to $1$ in the interval $[r, 1-r]$ and such that $f(0)=f(1)=0$, with $0\le f_r(t)\le 1$ for all $t$. Let $B(t)$ be any continuous field in $\bpe_{ah}$ and consider  $A_r(t)=f_r(t)B(t)$. Then $\int_0^1 A_r(t) Z(t) d t =0$, and if $r\to 0$, $\int_0^1 B(t) Z(t) d t =0$. Also it is clear that the integral will vanish if $A$ is non skew-hermitian. Indeed, it is clear if $A$ is hermitian, and for general $A$,  decompose $A$ as the sum of its hermitian and skew-hermitian parts.

Fix $t_0$ in the interval $[0,1]$. Let $Z(t_0)=u|Z(t_0)|$ be the polar decomposition, and consider $x=|Z(t_0)|^{q-1}u^*\in\bpe$. Consider the field $A(t)=xg(t)$ in $\bpe$, with $g$ a convenient smooth support function. Then
$$
0=\int_0^1 Tr(x Z(t))d t\ge c\|Z(t_0)\|_q^q.
$$
Then $v_0^{p-1}$ is constant, and since $p$ is even and $v_0$ is skew-hermitian, $v_0(t)=\gamma(t)^*\frac{d}{d t}\gamma(t)$ is constant, i.e. $\gamma(t)=e^{tx}$ for some $x\in\bpe_{ah}$.

Fact 2. was proved in \cite{odospe}, the (algebraic) argument for $p>2$ is the same as for $p=2$. 

Fact 3. was proved in \cite{odospe}.

Fact 4. follows from the elementary estimate in the remark above.
\end{proof}

Let us establish further facts on the metric structure of the group $\upe$.

\begin{lem}\label{expo}
Let $a,b\in \bpe$, let $\exp:\bpe\to 1+\bpe$ be $\exp(x)=e^x$, and $\ad a:\bpe\to \bpe$ the operator $\ad ax=xa-ax$. Then
$$
d \exp_a(b)=\int\limits_0^1 e^{(1-t)a}be^{ta}\, dt=e^a\,F(\ad a)b= F(\ad a)(e^a\, b),
$$
where $F(z)=\frac{e^z-1}{z}=\sum_{n\ge 0}\frac{z^{n}}{(n+1)!}$. The differential is invertible at $a$ if and only if $\sigma(\ad a)\cap \{2k\pi i\}=\emptyset$ ($k\in \mathbb Z_{\ne 0}$), and then 
$$
d\exp_a^{-1}(w)=e^{-a}F(\ad a)^{-1}w.
$$
In particular if $\|a\|<\pi$ then $d\exp_a$ is invertible. If  $a\in \bpe_{ah}$, then the differential is a contraction:
$$
\|d\exp_a(b)\|_p\le \|b\|_p.
$$
\end{lem}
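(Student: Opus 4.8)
The plan is to establish the integral representation first, then recast it through the bounded operator $\ad a$, deal with invertibility via the spectral mapping theorem, and finally read off the contraction estimate from unitary invariance of the $p$-norm. To begin, I would record that $\exp\colon\bpe\to 1+\bpe$ is analytic: from submultiplicativity $\|xy\|_p\le\|x\|\,\|y\|_p$ one gets $\|x^n\|_p\le\|x\|^{\,n-1}\|x\|_p$, so $\sum_n x^n/n!$ and its $s$-derivative converge uniformly on bounded subsets of $\bpe$, and term-by-term differentiation of $e^{a+sb}=\sum_n(a+sb)^n/n!$ at $s=0$ gives $d\exp_a(b)=\sum_{n\ge1}\frac1{n!}\sum_{j=0}^{n-1}a^jb\,a^{n-1-j}$. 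On the other hand, expanding the two exponentials inside $\int_0^1 e^{(1-t)a}be^{ta}\,dt$ and using the Beta integral $\int_0^1(1-t)^kt^l\,dt=k!\,l!/(k+l+1)!$ yields exactly the same double series $\sum_{k,l\ge0}a^kb\,a^l/(k+l+1)!$, which proves the first equality. (Equivalently, one may differentiate $s\mapsto e^{tA(s)}$ with $A(s)=a+sb$, obtaining the linear ODE $\partial_t H=bG+A(s)H$, $H(\,\cdot\,,0)=0$ for $H=\partial_s e^{tA(s)}$, and solve it with the integrating factor $e^{-tA(s)}$.)

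Next, to reach the $\ad a$ form, I would observe that $a$ commutes with each $e^{sa}$, so the $\mathcal L(\bpe)$-valued map $t\mapsto\Phi_t$, $\Phi_t(x)=e^{-ta}xe^{ta}$, satisfies $\Phi_0=\mathrm{id}$ and $\frac{d}{dt}\Phi_t=\Phi_t\circ\ad a=\ad a\circ\Phi_t$; since $\ad a$ is bounded ($\|\ad a\|\le 2\|a\|$, because $\|xa-ax\|_p\le 2\|a\|\,\|x\|_p$), this forces $\Phi_t=e^{t\,\ad a}$. Hence $e^{(1-t)a}be^{ta}=e^a\bigl(e^{t\,\ad a}b\bigr)$, and integrating over $t\in[0,1]$, together with $\int_0^1 e^{t\,\ad a}\,dt=F(\ad a)$, gives $d\exp_a(b)=e^aF(\ad a)b$. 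The identity $\ad a(e^ax)=e^a(xa-ax)$ shows that $\ad a$, hence $F(\ad a)$, commutes with the left-multiplication operator $L_{e^a}\colon x\mapsto e^ax$, which yields the remaining equality $e^aF(\ad a)b=F(\ad a)(e^ab)$.

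For invertibility, write $d\exp_a=L_{e^a}\circ F(\ad a)$ with $L_{e^a}$ invertible (inverse $L_{e^{-a}}$); thus $d\exp_a$ is invertible iff $F(\ad a)$ is. By the spectral mapping theorem applied to the entire function $F$ and the bounded operator $\ad a$, $\sigma\bigl(F(\ad a)\bigr)=F\bigl(\sigma(\ad a)\bigr)$, and $F(z)=0$ exactly when $z\in 2\pi i\,\mathbb Z\setminus\{0\}$ (note $F(0)=1$); hence $0\notin\sigma(F(\ad a))$ iff $\sigma(\ad a)\cap\{2k\pi i:k\neq0\}=\emptyset$, and the inverse formula $d\exp_a^{-1}(w)=F(\ad a)^{-1}e^{-a}w=e^{-a}F(\ad a)^{-1}w$ follows from the commutation above. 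When $\|a\|<\pi$, the spectral radius of $\ad a$ is at most $\|\ad a\|\le 2\|a\|<2\pi$, so $\sigma(\ad a)$ misses every $2k\pi i$ with $k\neq0$ and $d\exp_a$ is invertible. Lastly, if $a\in\bpe_{ah}$ then each $e^{sa}$ ($s$ real) is unitary, so $\|e^{(1-t)a}be^{ta}\|_p=\|b\|_p$ by unitary invariance of the $p$-norm, whence $\|d\exp_a(b)\|_p\le\int_0^1\|e^{(1-t)a}be^{ta}\|_p\,dt=\|b\|_p$.

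None of the steps is deep; the only point requiring genuine care is that everything happens in the infinite-dimensional, non-unital ideal $\bpe$, so one must justify the Fréchet differentiability of $\exp$ with the stated derivative and the various interchanges (differentiation under the series sign, operator-valued integration, $\int_0^1 e^{t\,\ad a}\,dt=F(\ad a)$). I expect this bookkeeping to be the main, if mild, obstacle, and all of it reduces to the geometric convergence furnished by $\|xy\|_p\le\|x\|\,\|y\|_p$ and $\|\ad a\|\le 2\|a\|$.
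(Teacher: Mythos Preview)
Your proof is correct and follows essentially the same line as the paper's: derive the integral formula, rewrite $e^{-ta}be^{ta}$ as $e^{t\,\ad a}b$, integrate to get $F(\ad a)$, invoke the spectral picture for invertibility, and use unitary invariance of $\|\cdot\|_p$ for the contraction bound. The only notable variation is in the first step: where you expand both sides as double series and match them via the Beta integral, the paper instead invokes the closed-form identity $e^{a+b}-e^a=\int_0^1 e^{(1-t)a}b\,e^{t(a+b)}\,dt$ (obtained by integrating $(e^{(1-t)a}e^{t(a+b)})'$ over $[0,1]$) and then divides by $s$ and lets $s\to 0$ after replacing $b$ by $sb$; this sidesteps the term-by-term differentiation bookkeeping you flag as the main care point.
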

\begin{proof}
Compute $\lim\limits_{s\to 0}\frac{e^{a+sb}-e^a}{s}$,
applied to the identity
$$
e^{a+b}-e^a=\int\limits_0^1 e^{(1-t)a}be^{t(a+b)}  ,
$$
which is elementary and can be proven integrating by parts the functions
$f(t)=e^{(1-t)a}$ and $g(t)=e^{t(a+b)}$ in $[0,1]$. To prove the second equality, write
$$
e^{-ta}be^{ta}=e^{t(R_a-L_a)}(b),
$$
where $L_a(x)=ax$ and $R_a(x)=xa$ denote left and right multiplication by $a$. Then
$$
\int_0^1 e^{t(R_a-L_a)}\, dt=\sum_{n\ge 0}\frac{1}{n!} \int_0^1 t^n\,dt (R_a-L_a)^n=\sum_{n\ge 0}\frac{1}{(n+1)!}(R_a-L_a)^n=F(\ad a).
$$
If $\|a\|<\pi$, then $\|\ad a\|<2\pi$ hence $\sigma(\ad a)\subset B(0,2\pi)$ so the spectrum of $\ad a$ does not intersect the zero set of $F$. The last assertion is due to the fact that, when $a$ is skew-hermitian, then $e^a$ is a unitary element, hence
$$
\|d\exp_a(b)\|_p\le \int_0^1\|e^{(1-t)a}be^{ta}\|_p\, dt=\|b\|_p.
$$
\qedhere
\end{proof}

The following elementary lemma will simplify the proof of the next theorem.

\begin{lem}\label{fseg}
Let $C,\varepsilon >0$, let $f(-\varepsilon,1+\varepsilon)\to \mathbb R$ be a non constant real analytic function such that $f'(s)^2\le C f''(s)$ for any $s\in [0,1]$. Then $f$ is strictly convex in $(0,1)$.
\end{lem}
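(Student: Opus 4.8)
The plan is to extract from the inequality $f'(s)^2\le Cf''(s)$ the two facts that drive everything. On the one hand it forces $f''(s)\ge 0$ for $s\in[0,1]$, so $f$ is automatically convex there; on the other hand, if $f'$ is locally constant on a subinterval of $[0,1]$, the inequality pins that constant to $0$. Real analyticity then converts a \emph{local} degeneracy into a \emph{global} one, producing a contradiction with $f$ being non-constant.

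First I would note that since $f$ is real analytic on the connected interval $(-\varepsilon,1+\varepsilon)$, its second derivative $f''$ is real analytic there, so exactly one of two things occurs: either $f''\equiv 0$ on all of $(-\varepsilon,1+\varepsilon)$, or the zero set of $f''$ consists of isolated points. I would dispose of the first alternative immediately: $f''\equiv 0$ gives $f'\equiv c$ for a constant $c$, and evaluating the hypothesis at any $s\in[0,1]$ yields $c^2\le C\cdot 0=0$, hence $c=0$ and $f$ is constant, against the assumption.

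So we are left with the case where $f''$ has only isolated zeros; in particular $f''$ does not vanish identically on any nondegenerate subinterval of $(0,1)$, and on $[0,1]$ we have $f''\ge 0$ from the hypothesis. Now I would invoke the elementary characterization that a $C^2$ function with nonnegative second derivative on an open interval is strictly convex there precisely when its second derivative does not vanish identically on any nondegenerate subinterval: if $f$ were convex but not strictly convex on $(0,1)$, it would coincide with a chord on some $[a,b]\subset(0,1)$ with $a<b$, forcing $f''\equiv 0$ on $[a,b]$, which contradicts the isolation of the zeros of $f''$. Hence $f$ is strictly convex on $(0,1)$, as claimed.

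There is no real obstacle here; the proof is short. The only points needing a sentence of justification are the dichotomy for zero sets of nonzero real analytic functions on a connected domain and the equivalence between strict convexity and the non-vanishing of $f''$ on subintervals, both standard. If one prefers to avoid citing the latter, one can argue directly: were $f$ not strictly convex on $(0,1)$, by convexity it would agree with a chord on some $[a,b]$, so $f''$ would vanish on $[a,b]$, and the identity theorem would propagate this to $f''\equiv 0$ on $(-\varepsilon,1+\varepsilon)$, reducing us to the case already excluded above.
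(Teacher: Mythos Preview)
Your proof is correct and in fact cleaner than the paper's. The approaches differ in which derivative you track. The paper studies the zeros of $f'$: from $f'(a)=f'(b)=0$ Rolle gives $f''(c)=0$ for some $c\in(a,b)$, and the hypothesis then forces $f'(c)=0$; iterating, one gets an accumulation point of zeros of $f'$, so analyticity makes $f'\equiv 0$ and $f$ constant. Having reduced to at most one zero $\alpha$ of $f'$, the paper runs a somewhat ad hoc case analysis on the position of $f(\alpha)$ relative to the chord to conclude strict convexity. You instead go straight to $f''$: the hypothesis gives $f''\ge 0$ on $[0,1]$, and the identity theorem for $f''$ (rather than $f'$) splits off the degenerate case $f''\equiv 0$ immediately, after which isolated zeros of $f''$ plus $f''\ge 0$ yield strict convexity by the standard ``equality on a chord forces affinity on a subinterval'' argument. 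Your route avoids the case analysis entirely and makes the role of analyticity more transparent; the paper's route is more hands-on and perhaps better illustrates how the specific inequality $f'(s)^2\le Cf''(s)$ propagates zeros, but it is longer and a bit looser in places.
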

\begin{proof}
By the mean value theorem, the condition on $f$ implies that for each pair of roots of $f'$, there is another root of $f'$ in between. Since $f$ is analytic and non-constant, the set of roots of $f'$ is an empty set or has one point $\alpha\in (-\varepsilon,1+\varepsilon)$. If this set of roots does not intersect $(0,1)$, then $f''>0$ there and we are done. We assume then that there exists $\alpha$ in $(0,1)$ such that $f'(\alpha)=0$. Note that  $-f'(x)=f'(\alpha)-f'(x)=\int_x^{\alpha}f''(s)ds> 0$ for any $x\in (-\varepsilon,\alpha]$ and $f'(y)=f'(y)-f_p'(\alpha)=\int_{\alpha}^y f''(s)ds>0$ for any $y\in [\alpha,1+\varepsilon)$, hence $f'$ is strictly negative in $(-\varepsilon,\alpha)$ and strictly positive in $(\alpha,1+\varepsilon)$, so $f$ is strictly convex in each interval. If $f(\alpha)<[f(1)-f(0)]\alpha+f(0)$, we are done. If not, by the mean value theorem there exists $x\in (0,\alpha)$, $y\in (\alpha,1)$ such that
$$
f(1)-f(0)=\frac{f(\alpha)-f(0)}{\alpha}=f'(x)<0
$$
and 
$$
f(1)-f(0)=\frac{f(1)-f(\alpha)}{1-\alpha}=f'(y)>0,
$$
a contradiction.
\end{proof}

\begin{rem}\label{jesiano}
The Hessian of the $p$-norms was studied in \cite{convexg,cocomata}. We recall a few facts we will use in the proof of the next theorem. Let $a,b,c\in \bpe_{ah}$, let $H_a:\bpe_{ah}\to \mathbb R$ stand for the symmetric bilinear form given by
$$
H_a(b,c)=(-1)^{\frac{p}{2}}p \sum_{k=0}^{p-2}\, Tr(a^{p-2-k}b a^k c).
$$
If $Q$ is the quadratic form associated to $H$, then (cf. Lemma 4.1 in \cite{convexg} and equation (3.1) in \cite{cocomata}):
\begin{enumerate}
\item $Q_a([b,a])\le 4 \|a\|_{\infty}^2 Q_a(b)$.
\item $Q_a(b)=p\|ba^{\frac{p}{2}-1}\|_2^2+\frac{p}{2}\sum_{l+m=n-2}\|a^l (ab+ba)a^m\|_2^2$.
\end{enumerate}
In particular $H_a$ is positive definite for any $a\in \bpe_{ah}$.
\end{rem}

Our convexity results follow. If $u\in\upe$, denote by $B_p(u,r)=\{w\in\upe: d_p(u,w)<r\}$.

\medskip

\begin{teo}\label{teoconvexidad1}
Let $p$ be a positive even integer, $u\in \upe$ and  $\beta:[0,1]\to\upe$ a non-constant geodesic contained in the geodesic ball of radius $\frac{\pi}{2}$, namely $\beta\subset
B_p(u,\frac{\pi}{2})$. Assume further that $u$ does not belong to any prologantion of $\beta$. Then 
$$
f_p(s)=d_p(u,\beta(s))^p
$$
is a strictly convex function.
\end{teo}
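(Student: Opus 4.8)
The plan is to reduce the convexity of $f_p(s)=d_p(u,\beta(s))^p$ to a differential inequality of the type handled by Lemma~\ref{fseg}, namely $f_p'(s)^2\le C\, f_p''(s)$ for a constant $C$ depending only on $\pi$. First I would note that $f_p$ is real analytic in $s$: by Theorem~\ref{minimalidadunitarios}, for each $s$ the unique minimal geodesic from $u$ to $\beta(s)$ has the form $u\,e^{t\,x(s)}$ with $\|x(s)\|<\pi$ (the hypothesis $\beta\subset B_p(u,\pi/2)$ guarantees we are well inside the radius $\pi$ where minimal geodesics are unique), so $d_p(u,\beta(s))^p=F_p$ of that geodesic $=\|x(s)\|_p^p=(-1)^{p/2}Tr(x(s)^p)$; and $x(s)$ depends analytically on $s$ because $x(s)=\log(u^*\beta(s))$ via the analytic branch of $\log$ on $\{\|1-w\|<2\}$ furnished by Remark~\ref{remarko}, together with $d\exp$ being invertible there (Lemma~\ref{expo}). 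The non-constancy of $f_p$ follows from $\beta$ non-constant and $u$ not lying on any prolongation of $\beta$: if $f_p$ were constant, the first-variation computation below would force $\beta$ to be constant.

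The core is the computation of $f_p'$ and $f_p''$. Write $\gamma_s(t)=u\,e^{t\,x(s)}$ for the minimal geodesic, a variation of curves with $\gamma_s(0)=u$ fixed and $\gamma_s(1)=\beta(s)$. Then $f_p(s)=F_p(\gamma_s)$, and I would invoke the first variation formula already recorded in the proof of Theorem~\ref{minimalidadunitarios} (from \cite{convexg}): with $v_s=\gamma_s^*\dot\gamma_s=x(s)$ (constant in $t$ along each geodesic) and $w_s=\gamma_s^*\partial_s\gamma_s$, one gets
$$
\frac{(-1)^{p/2}}{p}f_p'(s)=Tr\big(v_s^{p-1}w_s\big)\big|_{t=0}^{t=1}=Tr\big(x(s)^{p-1}\,w_s(1)\big),
$$
since $w_s(0)=0$. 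Differentiating once more in $s$ and using the second variation formula of \cite{convexg} (again applicable since the formal computation is identical), the boundary terms and the geodesic equation $\frac{d}{dt}(v_s^{p-1})=0$ leave
$$
\frac{(-1)^{p/2}}{p}f_p''(s)=(\text{Hessian term along }\gamma_s)+Tr\big(x(s)^{p-1}\,\nabla_{\!s}w_s(1)\big),
$$
and the Hessian term is, up to the sign $(-1)^{p/2}$, exactly $\frac1p\int_0^1 Q_{x(s)}(\text{something})\,dt$ in the notation of Remark~\ref{jesiano}, hence nonnegative; more precisely I expect it to be controlled below by the $Q$-quadratic form of the $t$-derivative of $w_s$ in the horizontal direction. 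The strategy is then to bound $f_p'(s)^2$ by Cauchy--Schwarz applied to $Tr(x^{p-1}w)$, pairing $x^{p-1}$ (which sits in $\bqe$, $1/p+1/q=1$) against $w$, and to compare the resulting quantity with the $Q_{x(s)}$-term appearing in $f_p''(s)$, using item~(1) of Remark~\ref{jesiano}, $Q_a([b,a])\le 4\|a\|_\infty^2 Q_a(b)$, where the factor $\|x(s)\|_\infty^2\le (\pi/2)^2$ is bounded because $\beta$ stays in the ball of radius $\pi/2$ (so $\|x(s)\|\le\pi/2$). This yields $f_p'(s)^2\le C f_p''(s)$ with $C$ absolute, and Lemma~\ref{fseg} finishes it.

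The main obstacle I anticipate is the bookkeeping in the second-variation term: one must identify the "variation field" $w_s$ along the minimal geodesic $\gamma_s$, control its endpoint behaviour ($w_s(0)=0$, $w_s(1)$ determined by $\dot\beta(s)$), and show that the cross term $Tr(x^{p-1}\nabla_s w_s(1))$ either vanishes or has a favourable sign, while simultaneously extracting enough of the positive Hessian integral to dominate $f_p'(s)^2$. This is where the hypothesis that $u$ lies on no prolongation of $\beta$ must really be used, presumably to rule out the degenerate case where $\dot\beta(s)$ is proportional to the geodesic direction and $f_p$ becomes affine rather than strictly convex. Getting the constant $C$ to come out clean (so that the $\pi/2$ radius is exactly what is needed) is the delicate point; everything else is the standard first/second variation machinery together with the two inequalities quoted from Remark~\ref{jesiano}.
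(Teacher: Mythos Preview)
Your overall architecture is right: reduce to Lemma~\ref{fseg}, compute $f_p'$ and $f_p''$ in terms of the Hessian form $H$ and the quadratic form $Q$ of Remark~\ref{jesiano}, and close with Cauchy--Schwarz. But the route through first/second variation formulas is a detour, and more importantly you are missing the key step that actually produces the lower bound $f_p''(s)\ge c\,Q_{w_s}(\dot w_s)$.

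The paper does not use the variation formula at all. It writes $w_s=\log(e^ve^{sz})$ (your $x(s)$) and differentiates $(-1)^{p/2}Tr(w_s^p)$ directly. The crucial observation is that $z=\int_0^1 e^{-tw_s}\dot w_s e^{tw_s}\,dt$ (Lemma~\ref{expo}), and since $w_s^{p-1}$ commutes with $e^{tw_s}$ one gets $Tr(w_s^{p-1}\dot w_s)=Tr(w_s^{p-1}z)$. Thus $f_p'(s)=(-1)^{p/2}p\,Tr(z\,w_s^{p-1})$ with $z$ \emph{constant in $s$}, so the second derivative is immediate: $f_p''(s)=H_{w_s}(\dot w_s,z)=\int_0^1 H_{w_s}(\delta_s(0),\delta_s(t))\,dt$ where $\delta_s(t)=e^{-tw_s}\dot w_s e^{tw_s}$. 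No second-variation bookkeeping, no cross terms to control.

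The part you do not have is why this integral is positive. The integrand $H_{w_s}(\delta_s(0),\delta_s(t))$ is not obviously nonnegative. The paper's device is a \emph{sphere argument}: in the pre-Hilbert space $\bpe_{ah}/\ker H_{w_s}$ with inner product $H_{w_s}$, the curve $\delta_s(t)$ lives on a sphere of radius $R_s=Q_{w_s}^{1/2}(\dot w_s)$, so $H_{w_s}(\delta_s(0),\delta_s(t))=R_s^2\cos\alpha_s(t)$. The angle is bounded by arc-length over radius, and the speed of $\delta_s$ is $Q_{w_s}^{1/2}([w_s,\dot w_s])\le 2\|w_s\|\,R_s$ by item~(1) of Remark~\ref{jesiano}. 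Hence $\alpha_s(t)\le 2t\|w_s\|_p<\pi$ and integrating $\cos$ gives $f_p''(s)\ge R_s^2\,\frac{\sin(2\|w_s\|_p)}{2\|w_s\|_p}>0$. This is where the radius $\pi/2$ enters sharply, and it is the idea your sketch lacks; the inequality $Q_a([b,a])\le 4\|a\|^2 Q_a(b)$ is used to bound the \emph{speed on the sphere}, not to compare $f_p'$ with $f_p''$ directly.

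Two smaller points. The correct Cauchy--Schwarz is the one for $H_{w_s}$ (giving $(p-1)^2 f_p'(s)^2\le Q_{w_s}(w_s)Q_{w_s}(\dot w_s)$), not the $\bpe$--$\bqe$ duality. And the degenerate case $R_s\equiv 0$ is dispatched using item~(2) of Remark~\ref{jesiano}: $R_s=0$ forces $w_s^{p/2-1}\dot w_s=0$, hence $w_s z=0$, so Baker--Campbell--Hausdorff gives $w_s=v+sz$; constancy of $\|w_s\|_p$ together with strict convexity of $\|\cdot\|_p$ then forces $v$ proportional to $z$, i.e.\ $u$ lies on a prolongation of $\beta$, which is excluded by hypothesis.
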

\begin{proof}
We may assume that $u=1$ since the action of unitary elements is isometric. Let
$v,z\in \bpe_{ah}$ such that $\beta(s)=e^ve^{sz}$. Let
$w_s=log(\beta(s))=log(e^ve^{sz})$, and $\gamma_s(t)=e^{tw_s}$. 

Now $\|w_s\|\le \|w_s\|_p<\pi/2$, so $\gamma_s$ is a short geodesic joining $1$ and $\beta(s)$, of length
$\|w_s\|_p=d_p(1,\beta(s))$. Then $f_p(s)=\|w_s\|_p^p=Tr((-w_s^2)^\frac{p}{2})=(-1)^{\frac{p}{2}}Tr(w_s^p)$, hence 
$$
f'_p(s)=
(-1)^{\frac{p}{2}}p\, Tr(w_s^{p-1} \dot{w_s})=\frac{1}{p-1} H_{w_s}(\dot{w_s},w_s).
$$
For $x,y\in B^p_{ah}$, we have the formula $d\,
\exp_x(y)=\int_0^1 e^{(1-t)x}ye^{tx}\,dt$ from the previous lemma. Since 
$e^{w_s}=e^ve^{sz}$, then $e^{-w_s}\; d\; \exp_{w_s} (\dot{w_s}) =z$, namely
\begin{equation}\label{difexp}
z= \int_0^1 e^{-tw_s} \dot{w_s} e^{tw_s}\;dt.
\end{equation}
Thus $Tr(w_s^{p-1} \dot{w_s} )=\int_0^1 Tr(w_s^{p-1}e^{-tw_s} \dot{w_s} e^{tw_s})\;dt=Tr(zw_s^{p-1})$.
Hence 
$$
f''_p(s)=
(-1)^{\frac{p}{2}}p \sum_{k=0}^{p-2}\, Tr(w_s^{p-2-k}\dot{w_s}w_s^k z)=H_{w_s}(\dot{w_s},z),
$$
and again by equation (\ref{difexp}) above, if we put $\delta_s(t)=e^{-tw_s}\dot{w_s} e^{tw_s}$, then
$$
f_p''(s)=\int_0^1 H_{w_s}(\delta_s(0),\delta_s(t))\,dt.
$$
Suppose that for this value of $s\in[0,1]$, $R_s^2:=Q_{w_s}(\dot{w_s})\ne 0$, where $Q_{w_s}$ is the quadratic form associated to $H_{w_s}$. If $K_s\subset \bpe_{ah}$ is the null space of $H_{w_s}$, consider the quotient space $\bpe_{ah}/K_s$ equipped with the inner product $H_{w_s}(\cdot,\cdot)$. An elementary computation shows that $\delta_s(t)$ lives in a sphere of radius $R_s$ of this pre-Hilbert space, hence
$$
H_w(\delta_s(0),\delta_s(t))=R_s^2 \cos(\alpha_s(t)),
$$
where $\alpha_s(t)$ is the angle subtended by $\delta_s(0)$ and $\delta_s(t)$. Then, reasoning in the sphere
$$
R_s\alpha_s(t)\le L_0^t(\delta_s)=\int_0^t Q_{w_s}^{\frac12}(e^{-tw_s}[w_s,\dot{w_s}]e^{t w_s})\,dt =\int_0^t Q_{w_s}^{\frac12}([w_s,\dot{w_s}])\,dt=t\,Q_{w_s}^{\frac12}([w_s,\dot{w_s}]).
$$
By property $1.$ of above remark,
$$
R_s\alpha_s(t)\le t\, 2\|w_s\|_{\infty} R_s \le 2t \|w_s\|_p R_s<R_s \pi
$$
if $\|w_s\|_p<\frac{\pi}{2}$. So
$$
\cos(\alpha_s(t))\ge \cos(2t\|w_s\|_p)
$$
and then integrating with respect to the $t$-variable,
$$
f''_p(s)\ge R_s^2 \frac{\sin(2\|w_s\|_p)}{2\|w_s\|_p}>0
$$
provided $R_s\ne 0$. On the other hand, the Cauchy-Schwarz inequality for $H_{w_s}$ shows that if $R_s=0$, then 
$$
(p-1)f_p'(s)=H_{w_s}(w_s,\dot{w_s})\le Q^{\frac12}_{w_s}(\dot{w_s})Q^{\frac12}_{w_s}(w_s)=0.
$$
Assume that $R_s$ is identically zero, $s\in [0,1]$. Then $f_p$ is constant with $f_p(s)=f_p(0)=\|v\|_p$ for any $s\in [0,1]$. Moreover, by property $2.$ of the remark above, $R_s=0$ implies $w_s^{\frac{p}{2}-1}z=0$ and an elementary computation involving the functional calculus of skew-adjoint operators shows that $w_s z=0$; in particular $vz=0$ which implies $w_s=v+sz$ by the Baker-Campbell-Hausdorff formula. But since the norm of $\bpe$ is strictly convex, $w_s$ cannot have constant norm unless $v$ is a multiple of $z$, and in that case, $u$ and $\beta$ are aligned contradicting the assumption of the theorem. So there is at least one point $s_0\in  [0,1]$ where $R_{s_0}\ne 0$, so $f_p$ is non constant and by Lemma \ref{fseg}, $f_p$ is strictly convex since
$$
(p-1)^2f_p'(s)^2=H_{w_s}^2(w_s,\dot{w_s})\le Q_{w_s}(\dot{w_s})Q_{w_s}(w_s)\le C f''_p(s).
$$
\end{proof}

\begin{rem}
A careful reading of the proof of the above theorem shows that $f_p$ is in fact strictly convex provided that the \textit{uniform} norm $\|w_s\|$ is strictly less than $\pi/2$. 
\end{rem}

\begin{coro}\label{teoconvexidad2}
Let $u_1,u_2,u_3\in \upe$ with $u_2,u_3\in B_p(u_1,\frac{\pi}{4})$, and assume that they are not aligned (i.e. they do not lie in the same geodesic). Let $\gamma(s)$ be the short geodesic joining $u_2$ with $u_3$. Then $dist_p(u_1,\gamma(s))<\frac{\pi}{2}$ for $s\in[0,1]$ and $\frac{\pi}{4}$ is the radius of convexity of the metric balls of $\upe$.
\end{coro}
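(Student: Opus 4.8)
\section*{Proof proposal}

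The plan is to prove the two assertions of the corollary in turn: the bound $dist_p(u_1,\gamma(s))<\frac\pi2$ is obtained by a purely metric triangle-inequality argument, and the statement about the convexity radius is then deduced by feeding this bound into Theorem \ref{teoconvexidad1}. First I would record the elementary facts about $\gamma$. Since $u_2,u_3\in B_p(u_1,\frac\pi4)$, the triangle inequality gives $L:=d_p(u_2,u_3)\le d_p(u_2,u_1)+d_p(u_1,u_3)<\frac\pi2<2$, so by Theorem \ref{minimalidadunitarios} the short geodesic joining $u_2$ and $u_3$ is unique and may be written $\gamma(t)=u_2e^{tz}$ with $z=\log(u_2^*u_3)$ and $\|z\|\le\|z\|_p=L<\frac\pi2$ (using Remark \ref{remarko} together with $\|u_3-u_2\|\le\|u_3-u_2\|_p\le d_p(u_2,u_3)$ from Theorem \ref{minimalidadunitarios}). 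Because $\|sz\|=s\|z\|<\pi$ for $s\in[0,1]$, Theorem \ref{minimalidadunitarios}.(1) applied to the reparametrized subarc $\tau\mapsto u_2e^{\tau(sz)}$ shows that each $\gamma|_{[0,s]}$ is minimal, and since $\gamma$ has constant speed $\|z\|_p=L$ this yields the \emph{exact} equalities $d_p(u_2,\gamma(s))=sL$ and $d_p(u_3,\gamma(s))=(1-s)L$.

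Next, the first assertion. For every $s\in[0,1]$ the triangle inequality gives
$$
d_p(u_1,\gamma(s))\le\min\Big(d_p(u_1,u_2)+sL,\ d_p(u_1,u_3)+(1-s)L\Big)<\frac\pi4+\frac\pi2\min(s,1-s)\le\frac\pi4+\frac\pi4=\frac\pi2 ,
$$
where all inequalities are strict because $d_p(u_1,u_2),d_p(u_1,u_3)<\frac\pi4$ and $L<\frac\pi2$; note that the two affine bounds coincide (with common value $\frac\pi2$) only at $s=\frac12$, so strictness there is exactly strictness of these input bounds. Hence $\gamma\subset B_p(u_1,\frac\pi2)$.

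Now the second assertion. The prolongation of $\gamma$ is the full geodesic $\{u_2e^{tz}:t\in\mathbb R\}$ through $u_2$ and $u_3$, so the hypothesis that $u_1,u_2,u_3$ are not aligned says precisely that $u_1$ lies on no prolongation of $\gamma$; moreover $\gamma$ is non-constant and, by the previous paragraph, contained in $B_p(u_1,\frac\pi2)$. Theorem \ref{teoconvexidad1} therefore applies: $f_p(s)=d_p(u_1,\gamma(s))^p$ is strictly convex on $[0,1]$, hence $f_p(s)<\max(f_p(0),f_p(1))=\max\big(d_p(u_1,u_2),d_p(u_1,u_3)\big)^p<(\tfrac\pi4)^p$, i.e.\ $\gamma\subset B_p(u_1,\frac\pi4)$. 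Together with the uniqueness of $\gamma$ recorded above, this is exactly geodesic convexity of the ball $B_p(u_1,\frac\pi4)$; the degenerate case in which $u_1,u_2,u_3$ all lie on one geodesic reduces to the one-dimensional geometry of that single geodesic (a circle, or the closure of a winding line) and is settled by inspection, using again that $\|z\|<\frac\pi2$ and $d_p(u_1,u_2),d_p(u_1,u_3)<\frac\pi4$. Finally, to see that no larger ball is geodesically convex I would exhibit, for each small $\varepsilon>0$, a pair $u_2,u_3\in B_p(1,\frac\pi4+\varepsilon)$ — built from two suitably positioned rank-one projections — whose unique short geodesic exits $B_p(1,\frac\pi4+\varepsilon)$ near its midpoint; this matches the saturation $\frac\pi4+\frac\pi4=\frac\pi2$ at $s=\frac12$ in the estimate of the second paragraph.

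The triangle-inequality computation is routine, and the application of Theorem \ref{teoconvexidad1} is immediate once the $\frac\pi2$-bound is in hand; the only genuinely delicate points are the (elementary but slightly tedious) verification of the aligned degenerate case, and the construction of the explicit two-dimensional example witnessing sharpness of the constant $\frac\pi4$. I expect that sharpness example to be the main, if minor, obstacle.
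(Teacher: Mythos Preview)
Your proposal is correct and follows essentially the same route as the paper: a triangle-inequality estimate to show $\gamma\subset B_p(u_1,\tfrac\pi2)$, followed by an appeal to Theorem~\ref{teoconvexidad1}. Your argument is in fact more careful than the paper's---the paper writes only $d_p(u_1,\gamma(s))\le d_p(u_1,u_2)+\tfrac12 d_p(u_2,u_3)$ (tacitly using the symmetric bound through $u_3$ for $s>\tfrac12$) and then says ``the conclusion follows from the previous theorem'', whereas you spell out both branches of the minimum, the strictness at $s=\tfrac12$, and the deduction $f_p(s)<\max(f_p(0),f_p(1))<(\tfrac\pi4)^p$ from strict convexity. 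You also go beyond the paper in two respects: the paper neither treats the aligned degenerate case nor argues that $\tfrac\pi4$ is \emph{optimal}; it reads ``$\tfrac\pi4$ is the radius of convexity'' as the assertion that $\tfrac\pi4$-balls are geodesically convex, without a sharpness claim, so your proposed rank-one example is extra work not required for the corollary as the authors intend it.
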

\begin{proof}
Note that 
\begin{eqnarray}
dist_p(u_1,\gamma(s)) & \le & dist_p(u_1,u_2)+\frac12 dist_p(u_2,u_3)  \nonumber\\
 & \le  & dist_p(u_1,u_2) + \frac12 (dist_p(u_2,u_1)+dist_p(u_3,u_1))
< 2\frac{\pi}{4}=\frac{\pi}{2} \nonumber,
\end{eqnarray}
hence the conclusion follows from the previous theorem.
\end{proof}

\bigskip

\section{Minimality in $\oo$: initial values problem}

For our main result on minimality in $\oo$, we make the  assumption that for some (hence for any)  $x\in\oo$, the group $G_x$  is locally exponential. Namely, there exists a radius $\delta>0$ such that if  $v\in G_x$ with $\|v-1\|_p<\delta$, then there exists an element $z\in \g_x$ such that $v=e^z$.  This is equivalent to the fact that $G_x$ is a (non complemented) Banach-Lie subgroup of $\upe$. This property implies in particular, that $G_x$ is locally geodesically convex: given any pair of elements $v_1,v_2 \in G_x$ with $\|v_1-v_2\|_p<\delta$, then there exists a unique minimal geodesic of $\upe$, which lies inside $G_x$, and joins $v_1$ and $v_2$.

Our argument on minimality in $\oo$ will consist in comparing the lengths of the liftings of curves to the unitary group $\upe$. For the case $p=2$ this technique is based on the following fact: 

\begin{rem}
Let $\gamma(t)$, $t\in[0,1]$ be a smooth curve in $\oo$, with $\gamma(0)=x$,  and let $\Gamma$ be its horizontal lifting. Then
$$
L_2(\Gamma)=L_2(\gamma).
$$
Indeed, recall from (\ref{levantamientohorizontal}) that $\dot{\Gamma}=\kappa_\gamma(\dot{\gamma}) \Gamma$, and also note that by definition of the metric, $\kappa_x:(T\oo)_x\to  \f_{x}\subset \beah$ is isometric. Then 
$$
\|\dot{\gamma}\|_\gamma=\|\kappa_\gamma(\dot{\gamma})\|_2=\|\Gamma^* \dot{\Gamma}\|_2=\|\dot{\Gamma}\|_2,
$$
and the result follows.
\end{rem}

Let us show that for $p>2$ we can still have isometric lifts of curves in ${\cal O}$. 

First note that the general theory ensures the existence of piecewise $C^1$ liftings in $\upe$ of $C^1$ curves in $\oo$, due to the fact that for any fixed $x\in\oo$, the map
$$
\pi_x:\upe\to \oo, \pi_x(u)=u\cdot x,
$$
is a submersion.
 
We need to discuss the projection to closed linear spaces in $\bpe$ and a few technical lemmas first.

\begin{rem}\label{proy}
Let $1<p<\infty$. Then for any convex closed set $S\subset \bpe_{ah}$ there exists a continuous map $Q_S:\bpe_{ah}\to S$ which sends $x\in \bpe_{ah}$ to its best approximant $Q_S(x)\in S$, i.e. 
$$
\|x-Q_S(x)\|_p\le \|x-s\|_p
$$
for any $s\in S$.

The map $Q_S$ is single-valued and continuous,  because $\bpe$ is uniformly convex and uniformly smooth (see for instance \cite{chongli}). Note that 
$$
\|Q(x)\|_p\le \|Q(x)-x\|_p+\|x\|_p\le \|0-x\|_p+\|x\|_p=2\|x\|_p
$$
and also that
$$
\|x-Q_S(x)-s\|_p\ge \|x-Q_S(x)\|_p
$$
for any $s\in S$, hence $Q_S(x-Q_S(x))=0$, namely $Q_S\circ (1-Q_S)=0$. Also, for any positive $\lambda\in\mathbb{R}$,
$$
Q_S(\lambda x)=\lambda Q_S(x).
$$

Let $x\in {\cal O}$, let $G=G_{x}$ be the isotropy group and ${\cal G}_x$ the Lie algebra of $G$ as usual. Let $S=\g_x$ and $Q=Q_{\g_x}$ be the projection to the best approximant in $\g_x$. Let
$$
\g_x^{\perp_p}=Q^{-1}(0)=\{x\in \bpe_{ah}:\|x\|_p\le \|x-y\|_p \; \mbox{ for any } \; y\in \g_x\}.
$$
Then any element $z\in \bpe_{ah}$ can be decomposed as
$$
z=z-Q(z)+Q(z),
$$
where $z-Q(z)\in \g_x^{\perp_p}$ and $Q(z)\in \g_x$. In particular, these facts imply that given $x\in\oo$ and $X\in(T\oo)_x$,  there exists a minimal lifting $z_0\in\bpe_{ah}$ for $x$. Indeed, since 
$$
\pi_x:\upe\to \oo, \ \pi_x(u)=u\cdot x
$$
is a smooth submersion, the differential $(d\pi_x)_1$ is surjective, and thus there exists $z\in\bpe_{ah}$ such that $d(\pi_x)_1(z)=X$. Then a minimal lifting is
$$
z_0=z-Q(z)\in \g_x^{\perp_p}.
$$
Calling $\bar{Q}=1-Q$, we have 
$$
\g_x=\bar{Q}^{-1}(0)=Im(Q),\qquad \g_x^{\perp_p}=Q^{-1}(0)=Im(\bar{Q}),\qquad 
$$
and also
$$
\bar{Q}^2=\bar{Q},\quad Q^2=Q,\quad \bar{Q}\circ Q=Q\circ \bar{Q}=0.
$$

\end{rem}

\begin{lem}\label{perpep}
Let $p$ be an even positive integer. Let $x\in\oo$ and $X\in(T\oo)_x$.  An element $z\in\bpe_{ah}$ with $(d\pi_x)_1(z)=X$ is a minimal lifting for $X$ if and only if $Tr(z^{p-1}y)=0$ for all $y\in\g_x$. For any $X\in(T\oo)_x$ there exists a unique minimal lifting $z\in \g_x^{\perp_p}$ such that $\|z\|_p=\|X\|_x$.
\end{lem}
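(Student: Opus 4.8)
The plan is to recast the statement as a best--approximation problem in the uniformly convex real Banach space $\bpe_{ah}$, where existence, uniqueness and continuity of best approximants onto closed convex sets are already recorded in Remark \ref{proy}, and then to detect that best approximant by a first order (variational) condition. Fix any lifting $z_0\in\bpe_{ah}$ of $X$, which exists since $(d\pi_x)_1$ is surjective. As $\g_x=\ker(d\pi_x)_1$ is a closed subspace, the set of all liftings of $X$ is the coset $z_0+\g_x$, and an element $z$ of this coset is a minimal lifting exactly when $\|z\|_p\le\|z-y\|_p$ for every $y\in\g_x$, that is, when $z\in\g_x^{\perp_p}$ in the notation of Remark \ref{proy}. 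Writing $Q=Q_{\g_x}$ for the (single--valued, continuous) best approximant map of that remark, the element $z:=z_0-Q(z_0)$ lies in $\g_x^{\perp_p}$, is a lifting of $X$, and satisfies $\|z\|_p=\inf\{\|z_0-y\|_p:y\in\g_x\}=\|X\|_x$; this gives the asserted minimal lifting. It is unique, for any minimal lifting has the form $z_0-y$ with $y$ a best approximant of $z_0$ in $\g_x$, and such $y$ is unique by single--valuedness of $Q$.

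For the variational characterization, fix a lifting $z$ of $X$ and put $\phi:\g_x\to\mathbb R$, $\phi(y)=\|z+y\|_p^p$. Since $z+y$ is skew-hermitian and $p$ is even, $\phi(y)=(-1)^{p/2}Tr((z+y)^p)$, using the elementary identity $\|a\|_p^p=(-1)^{p/2}Tr(a^p)$ for skew-hermitian $a$ (immediate from the spectral theorem). As $a^p\in\B_1(\h)$ with $\|a^p\|_1\le\|a\|_p^p$ for $a\in\bpe$ by the H\"older inequality for Schatten norms, $\phi$ is the restriction to $z+\g_x$ of the smooth polynomial map $a\mapsto(-1)^{p/2}Tr(a^p)$ on $\bpe_{ah}$, and by cyclicity of the trace
$$
d\phi_y(w)=(-1)^{p/2}\,p\,Tr\big((z+y)^{p-1}w\big),\qquad w\in\g_x .
$$
Being the $p$-th power ($p>1$) of a norm, $\phi$ is convex, so $0$ minimizes $\phi$ over the subspace $\g_x$ if and only if $d\phi_0$ annihilates $\g_x$, i.e.\ if and only if $Tr(z^{p-1}w)=0$ for every $w\in\g_x$. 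Since the condition that $0$ minimize $\phi$ over $\g_x$ is precisely the condition that $z$ be a minimal lifting (equivalently $z\in\g_x^{\perp_p}$), this establishes the equivalence; in particular the minimal lifting $z\in\g_x^{\perp_p}$ produced above is the one with $\|z\|_p=\|X\|_x$ and is characterized by $Tr(z^{p-1}w)=0$ for all $w\in\g_x$.

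I expect the only step needing genuine care to be the smoothness of $a\mapsto Tr(a^p)$ on the Schatten class and the formula for $d\phi$: one verifies that $a\mapsto a^p$ is a bounded polynomial map $\bpe\to\B_1(\h)$ (via the H\"older inequality for Schatten norms), that $Tr$ is bounded on $\B_1(\h)$, so the chain and product rules apply, and that the $p$ summands $Tr\big((z+y)^k\,w\,(z+y)^{p-1-k}\big)$ collapse to $p\,Tr\big((z+y)^{p-1}w\big)$ by the cyclic property of the trace. Everything else reduces to convexity of $t\mapsto t^p$ and to the properties of best approximants in the uniformly convex space $\bpe$ already available from Remark \ref{proy}.
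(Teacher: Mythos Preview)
Your proof is correct and follows essentially the same route as the paper: both reduce the characterization to the first--order condition for the convex function $y\mapsto\|z-y\|_p^p$ on $\g_x$ (the paper does this one direction $y$ at a time via $f(t)=\|z-ty\|_p^p$, while you differentiate the full map $\phi$), and both obtain existence from Remark~\ref{proy}. For uniqueness you invoke the single--valuedness of $Q$ directly, whereas the paper rederives it from uniform convexity via the midpoint; these are the same idea at different levels of packaging.
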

\begin{proof}
Suppose that $z_0$ is a minimal lifting, and for a fixed $y\in\g_x$, let $f(t)=\|z_0-ty\|_p^p$. Then $f$ is a smooth map with a minimum at $t=0$, i.e. $f'(0)=0$. A straightforward computation shows that $f'(t)=Tr((z_0-ty)^{p-1}y)$, and thus $Tr(z_0^{p-1}y)=0$. Conversely, suppose that $Tr(z_0^{p-1}y)=0$ for all $y\in\g_x$ and suppose that there exists $y_0\in\g_x$ such that $\|z_0-y_0\|_p<\|z_0\|_p$. Then the map $f(t)=\|z_0-ty_0\|_p^p$ would not have a minimum at $t=0$. This is a contradiction, since $f$ is convex and $f'(0)=0$. The  existence of minimal liftings was established in the previous remark: take any $w\in \bpe_{ah}$ such that $(d\pi_x)_1(w)=X$ and then take $z=w-Q_{\g_x}(w)$. If $(d\pi_x)_1(z_1)=(d\pi_x)_1(z_2)=X$ for $z_1,z_2\in \bpe_{ah}$, then $z_1-z_2\in \g_x$; if $z_1$ and $z_2$ are minimal liftings of $X$, then we have $\|z_1\|_p\le \|z_1-(z_1-z_2)\|_p=\|z_2\|_p$ and the reversed inequality also holds, hence $\|z_1\|_p=\|z_2\|_p=\|X\|_x$. To prove uniqueness we may assume then that $\|z_1\|_p=\|z_2\|_p=1$. Consider the smooth convex function $g:\g_x\to \mathbb R_{>0}$ given by
$$
y\mapsto \|z_1-y\|_p^p.
$$
Now $g(0)=\|z_1\|_p^p=1$ is a minimum for $g$, and we are assuming that $g(z_1-z_2)=\|z_2\|_p^p=1$ is another minimum. Hence $g$ must be constant on the straight segment $s(z_1-z_2)\in\g_x$ for any $s\in [0,1]$. In particular (with $s=\frac12$), 
$$
\|\frac12 (z_1+z_2)\|_p^p=\|z_1\|_p^p=\|z_2\|_p^p=1,
$$
which forces $z_1=z_2$, since $\bpe$ is uniformly convex.
\end{proof}

Having established the linear result on minimal liftings, let us prove two technical lemmas in order to extend the isometric lifting property  to smooth curves  $\gamma\subset {\cal O}$.

\begin{lem}
Let $k\ge 1$, $w\in \bpe$ with $\|w\|_p<\frac{\pi}{2}$. Then
$$
T=1+\frac{(\ad w)^2}{4k^2\pi^2}
$$
is invertible in ${\cal B}(\bpe)$ and 
$$
 \|T^{-1}\|\le \left(1-\frac{\|w\|^2}{k^2\pi^2} \right)^{-1}\le \left(1-\frac{\|w\|_p^2}{k^2\pi^2} \right)^{-1}.
$$
\end{lem}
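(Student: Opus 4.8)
The plan is to exhibit $T$ as a strict-contraction perturbation of the identity in the unital Banach algebra ${\cal B}(\bpe)$ (bounded operators on $\bpe$ with the operator norm induced by $\|\cdot\|_p$), and then invoke the Neumann series.

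First I would bound the norm of $\ad w$ acting on $\bpe$. For every $x\in\bpe$ the $p$-norm is submultiplicative against the uniform norm on either side, so
$$
\|\ad w(x)\|_p=\|xw-wx\|_p\le \|xw\|_p+\|wx\|_p\le 2\|w\|\,\|x\|_p,
$$
whence $\|\ad w\|_{{\cal B}(\bpe)}\le 2\|w\|$ and therefore $\|(\ad w)^2\|_{{\cal B}(\bpe)}\le 4\|w\|^2$. Dividing by $4k^2\pi^2$ gives
$$
\left\|\frac{(\ad w)^2}{4k^2\pi^2}\right\|_{{\cal B}(\bpe)}\le \frac{\|w\|^2}{k^2\pi^2}.
$$

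Next I would observe that this quantity is strictly below $1$: since $\|w\|\le\|w\|_p<\pi/2$ and $k\ge 1$, we get $\|w\|^2/(k^2\pi^2)<1/(4k^2)\le 1/4<1$. Hence $T=1+\frac{(\ad w)^2}{4k^2\pi^2}$ differs from the unit of ${\cal B}(\bpe)$ by an element of norm strictly less than one, so it is invertible there, with $T^{-1}=\sum_{n\ge 0}\left(-\frac{(\ad w)^2}{4k^2\pi^2}\right)^n$, and majorizing by the geometric series,
$$
\|T^{-1}\|\le \sum_{n\ge 0}\left(\frac{\|w\|^2}{k^2\pi^2}\right)^n=\left(1-\frac{\|w\|^2}{k^2\pi^2}\right)^{-1}.
$$
The second inequality of the statement then follows from $\|w\|\le\|w\|_p$: both denominators are positive (the smaller one, $1-\|w\|_p^2/(k^2\pi^2)$, already exceeds $3/4$), and $t\mapsto(1-t)^{-1}$ is increasing on $[0,1)$, so replacing $\|w\|$ by the larger $\|w\|_p$ only enlarges the bound.

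I expect no substantial obstacle here; the only points deserving care are that the operator norm on ${\cal B}(\bpe)$ is the one induced by $\|\cdot\|_p$ — so that the input estimate $\|\ad w\|\le 2\|w\|$, phrased with the \emph{uniform} norm of $w$, is exactly the right ingredient — and that the hypothesis $\|w\|_p<\pi/2$ together with $k\ge 1$ is precisely what forces the perturbation to be a strict contraction, which is why the constants come out as stated (and, incidentally, why the denominators in the final bound remain bounded away from $0$). This lemma is of course tailored to invert $d\exp_w$ factor by factor via the product expansion of $F(z)=(e^z-1)/z$, each factor being of the form $1+z^2/(4k^2\pi^2)$ evaluated at $z=\ad w$.
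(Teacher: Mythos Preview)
Your proof is correct and follows exactly the same approach as the paper: bound $\|\ad w\|\le 2\|w\|$, observe that the perturbation $(\ad w)^2/(4k^2\pi^2)$ then has norm less than $1$, and invert $T$ via the Neumann series to obtain the stated estimate. The paper compresses all of this into a single sentence, but the argument is identical.
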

\begin{proof}
Since $\|\ad w\|\le 2\|w\|\le 2\|w\|_p<\pi$, the map $T$ is invertible and its inverse can be computed with the Neumann series.
\end{proof}

\begin{rem}
Consider  $\displaystyle g(r)=\frac{r}{\sin(r)}$  with $g(0)=1$. Then $g:[0,\pi)\to \mathbb R$ is positive and increasing, and from the Weierstrass expansion of $\sin(z)$ we obtain
$$
g(z)=\prod_{k\ge 1} \left(1-\frac{z^2}{k^2\pi^2} \right)^{-1},
$$
for any $z$ such that $|z|<\pi$.
\end{rem}

\begin{prop}\label{seno}
Let $F(z)=\displaystyle\frac{e^z-1}{z}$, $\displaystyle g(r)=\frac{r}{\sin(r)}$. Let $w\in \bpe$ with $\|w\|_p<\frac{\pi}{2}$. Let $t\in [0,1]$. Then
$$
\|F( \ad w)^{-1}\|\le \,g(\|w\|)\le \, g(\|w\|_p).
$$
\end{prop}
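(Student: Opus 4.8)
The plan is to deduce the bound from the scalar Hadamard factorization of the entire function $F(z)=(e^z-1)/z$, transported to the bounded operator $\ad w\in{\cal B}(\bpe)$ via the holomorphic functional calculus, and then to estimate the resulting factors one at a time using the preceding Lemma and Remark. Writing $e^z-1=e^{z/2}\,2\sinh(z/2)$ and using the Weierstrass product $2\sinh(z/2)=z\prod_{k\ge1}\big(1+\frac{z^2}{4\pi^2k^2}\big)$, one obtains the identity of entire functions
$$
F(z)=e^{z/2}\,\prod_{k\ge1}\Big(1+\frac{z^2}{4\pi^2k^2}\Big).
$$
The zeros of $F$ are the points $2\pi ik$ with $k\in\mathbb Z_{\ne0}$, all of modulus $\ge2\pi$. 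Since $\|\ad w\|\le2\|w\|\le2\|w\|_p<\pi$, the spectrum of $\ad w$ lies in the open disk of radius $\pi$, on which $F$ is holomorphic and nowhere zero; hence $F(\ad w)$ is invertible (as already noted in Lemma \ref{expo}), and by functional calculus
$$
F(\ad w)^{-1}=e^{-\ad w/2}\,\prod_{k\ge1}\Big(1+\frac{(\ad w)^2}{4\pi^2k^2}\Big)^{-1},
$$
the infinite product converging in ${\cal B}(\bpe)$ because $\sum_k\|(\ad w)^2\|\,(4\pi^2k^2)^{-1}<\infty$.

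It then remains to bound the two factors. The operator $e^{-\ad w/2}$ acts as the inner conjugation $x\mapsto e^{w/2}xe^{-w/2}$; when $w$ is skew-hermitian — which is the only case we shall need, $w$ being a logarithm of a unitary — the element $e^{w/2}$ is unitary, so this map is an isometry of $\bpe$ and contributes the factor $1$. For the remaining product, the previous Lemma gives, for each $k\ge1$,
$$
\Big\|\Big(1+\frac{(\ad w)^2}{4\pi^2k^2}\Big)^{-1}\Big\|\le\Big(1-\frac{\|w\|^2}{k^2\pi^2}\Big)^{-1},
$$
and multiplying over $k$ and invoking the Weierstrass expansion of $\sin$ from the Remark above (valid since $\|w\|<\pi/2<\pi$) yields $\prod_{k\ge1}\big(1-\frac{\|w\|^2}{k^2\pi^2}\big)^{-1}=g(\|w\|)$. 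By submultiplicativity of the operator norm, $\|F(\ad w)^{-1}\|\le g(\|w\|)$. The second inequality is immediate from $\|w\|\le\|w\|_p$ together with the monotonicity of $g$ on $[0,\pi)$ recorded in the Remark. The $t\in[0,1]$ clause is obtained by running the same argument with $tw$ in place of $w$ (note $\|tw\|_p\le\|w\|_p<\pi/2$), giving the uniform bound $\|F(t\,\ad w)^{-1}\|\le g(t\|w\|)\le g(\|w\|)$.

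I expect the only point requiring genuine care to be the justification of the operator identity for $F(\ad w)^{-1}$: one should verify that the partial products $\prod_{k\le N}\big(1+\frac{(\ad w)^2}{4\pi^2k^2}\big)^{-1}$ converge in ${\cal B}(\bpe)$ and that the limit equals $h(\ad w)^{-1}e^{-\ad w/2}$, where $h(z)=2\sinh(z/2)/z$. This follows from the fact that the corresponding scalar partial products converge locally uniformly (to $h$, resp. to $1/h$) on a neighbourhood of $\sigma(\ad w)$, combined with the norm-continuity of the Riesz--Dunford functional calculus; the estimates themselves are then routine. A minor caveat worth flagging: the step $\|e^{-\ad w/2}\|=1$ uses $w\in\bpe_{ah}$, so one should either state the proposition for skew-hermitian $w$ or keep the harmless factor $\|e^{w/2}\|\,\|e^{-w/2}\|$ in the general case.
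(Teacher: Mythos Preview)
Your proof is correct and follows essentially the same route as the paper: express $F(\ad w)^{-1}$ via the Weierstrass/Hadamard factorization of $F$, bound each quadratic factor by the preceding Lemma, and recognize the resulting product as $g(\|w\|)$ via the Remark. Your version is in fact more careful than the paper's: the paper writes the product formula as $F(z)=\prod_{k\ge1}\bigl(1+\tfrac{z^2}{4k^2\pi^2}\bigr)$, which is literally $\tfrac{2\sinh(z/2)}{z}$ and misses the $e^{z/2}$ factor you correctly include. This omission is harmless for the stated bound precisely because, as you observe, $e^{-\ad w/2}$ acts as conjugation by the unitary $e^{-w/2}$ and is therefore an isometry on $\bpe$; your caveat that this step uses $w\in\bpe_{ah}$ (the only case used downstream) is well taken.
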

\begin{proof}
The Weierstrass expansion of $F(z)=\frac{e^z-1}{z}$ is given by 
$$
F(z)=\prod_{k\ge 1} \left(1+\frac{z^2}{4k^2\pi^2} \right)
$$
where the product converges uniformly on compact sets to $F$. Then $F(\ad w)$ is invertible since $\|\ad w\|<\pi$ and
$$
F(\ad w)^{-1}= \prod_{k\ge 1} 
\left(1+\frac{(\ad(w))^2}{4k^2\pi^2}\right)^{-1}.
$$
Hence
$$
\|F(\ad w)^{-1}\|\le \prod_{k\ge 1} \left(1-\frac{\|w\|^2}{k^2\pi^2}\right)^{-1}=g(\|w\|)\le g(\|w\|_p)
$$ 
by the previous lemma.
\end{proof}

\begin{lem}
Let $1<p<\infty$, $x\in {\cal O}$  and  $Q=Q_{{\cal G}_x}$ be the best approximant projection. Let $\Gamma\subset\upe$ be a piecewise $C^1$ curve parametrized in the  interval $[0,1]$. Then there exists a piecewise $C^1$ curve $z:[0,1]\to {\cal G}_x$ with $z(0)=0$ such that
$$
F( \ad z)\dot{z}=-Q(\Gamma^*\dot{\Gamma}).
$$
If $u_{\Gamma}=e^z\in G_x$, then $u_{\Gamma}:[0,1]\to \bpe$ obeys the differential equation
$$
\dot{u_{\Gamma}}u_{\Gamma}^*=-Q(\Gamma^*\dot{\Gamma}),
$$
and $L_p(u_{\Gamma})\le 2 L_p(\Gamma)$.
\end{lem}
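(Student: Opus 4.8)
First I would fix the notation $v(t):=-Q(\Gamma(t)^*\dot\Gamma(t))$, where $Q=Q_{\g_x}$ is the best--approximation map of Remark \ref{proy}. Since $\Gamma$ is piecewise $C^1$ with unitary values, $\Gamma^*\Gamma\equiv 1$ forces $\Gamma^*\dot\Gamma$ to be piecewise continuous with values in $\bpe_{ah}$; composing with the continuous, positively homogeneous map $Q$ shows that $v$ is piecewise continuous, takes values in $\g_x\subset\bpe_{ah}$, and, by the norm bound in Remark \ref{proy} together with the unitary invariance of $\|\cdot\|_p$,
$$
\|v(t)\|_p=\|Q(\Gamma^*\dot\Gamma)\|_p\le 2\|\Gamma^*\dot\Gamma\|_p=2\|\dot\Gamma(t)\|_p .
$$
The plan is to obtain $z$ as the solution of the initial value problem $\dot z=F(\ad z)^{-1}(v)$, $z(0)=0$, inside the closed subspace $\g_x=\ker(d\pi_x)_1$, and then to read off the assertions about $u_\Gamma=e^z$ from Lemma \ref{expo}.

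For the existence of $z$: by Lemma \ref{expo} the operator $F(\ad z)$ is invertible for $z$ near $0$, and since $\g_x$ is a Lie subalgebra it is invariant under $\ad z$, hence under $F(\ad z)$ and under $F(\ad z)^{-1}$; thus the right--hand side is a vector field on a neighbourhood of $0$ in $\g_x$ which is piecewise continuous in $t$ and smooth in $z$. Solving on each smooth piece of $\Gamma$ and matching the pieces continuously at the finitely many breakpoints, Picard--Lindel\"of gives a piecewise $C^1$ local solution with $z(0)=0$. To continue it over all of $[0,1]$ I would use the following a priori estimate: since $p$ is even and $z$ skew-hermitian, $\|z\|_p^p=(-1)^{p/2}Tr(z^p)$, and since $z^{p-1}$ commutes with $e^{sz}$ while $v=F(\ad z)\dot z=\int_0^1 e^{-sz}\dot z\,e^{sz}\,ds$ by Lemma \ref{expo},
$$
\frac{d}{dt}\|z\|_p^p=(-1)^{p/2}p\,Tr(z^{p-1}\dot z)=(-1)^{p/2}p\int_0^1 Tr(z^{p-1}e^{-sz}\dot z\,e^{sz})\,ds=(-1)^{p/2}p\,Tr(z^{p-1}v),
$$
and H\"older's inequality ($1/p+1/q=1$, $\|z^{p-1}\|_q=\|z\|_p^{p-1}$) then gives $\frac{d}{dt}\|z\|_p\le\|v\|_p$, so that $\|z(t)\|\le\|z(t)\|_p\le\int_0^t\|v\|_p\le 2L_p(\Gamma)$. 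Since the operator norm is dominated by the $p$--norm, $z(t)$ remains in the region where $F(\ad z(t))$ is invertible (Lemma \ref{expo}), so the solution neither blows up nor loses regularity and therefore extends to all of $[0,1]$. \emph{This local--to--global step is the real obstacle}: the algebraic identities below are purely formal, but one must keep $z$ controlled so that $F(\ad z)$ stays invertible along the whole curve, and the estimate $\frac{d}{dt}\|z\|_p\le\|v\|_p$ is exactly what supplies this.

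Finally I would set $u_\Gamma:=e^z$ and verify the remaining claims. Because $z(t)\in\g_x$ and $G_x$ is locally exponential, $\exp$ maps $\g_x$ into $G_x$ (write $e^{w}=(e^{w/N})^N$ with $w/N$ close to $0$), so $u_\Gamma(t)\in G_x$, and $u_\Gamma(t)-1=z(t)F(z(t))\in\bpe$. Differentiating $u_\Gamma(t)=e^{z(t)}$ by Lemma \ref{expo} and using the equation for $z$,
$$
\dot u_\Gamma=d\exp_z(\dot z)=e^{z}F(\ad z)\dot z=u_\Gamma\,v ,
$$
which is the stated first--order differential equation $u_\Gamma^*\dot u_\Gamma=-Q(\Gamma^*\dot\Gamma)$ (an identity in $1+\bpe$); in particular $u_\Gamma$ is piecewise $C^1$, and since $u_\Gamma$ is unitary and $\|\cdot\|_p$ is unitarily invariant, $\|\dot u_\Gamma(t)\|_p=\|v(t)\|_p=\|Q(\Gamma^*\dot\Gamma)\|_p$. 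Integrating and using the bound on $v$ from the first paragraph,
$$
L_p(u_\Gamma)=\int_0^1\|\dot u_\Gamma\|_p\,dt=\int_0^1\|Q(\Gamma^*\dot\Gamma)\|_p\,dt\le 2\int_0^1\|\dot\Gamma\|_p\,dt=2L_p(\Gamma),
$$
which completes the argument.
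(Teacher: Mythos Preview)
Your overall strategy---solve $\dot z=F(\ad z)^{-1}v$ locally by Picard--Lindel\"of, derive an a~priori bound on $\|z(t)\|_p$, and use it to push the solution to all of $[0,1]$---is attractive and differs from the paper's route, but the global continuation step has a genuine gap. The estimate $\|z(t)\|_p\le\int_0^t\|v\|_p\le 2L_p(\Gamma)$ is fine (for even $p$), yet to keep $F(\ad z(t))$ invertible Lemma~\ref{expo} requires $\|z(t)\|<\pi$; your bound only gives $\|z(t)\|\le\|z(t)\|_p\le 2L_p(\Gamma)$, and nothing in the hypotheses forces $L_p(\Gamma)<\pi/2$. For a long $\Gamma$ the curve $z$ can reach the set where $F(\ad\cdot)$ is singular, and the vector field $F(\ad z)^{-1}v$ ceases to be defined---so boundedness of $\|z\|_p$ alone does not yield extension. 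Note also that your identity $\frac{d}{dt}\|z\|_p^p=(-1)^{p/2}p\,Tr(z^{p-1}\dot z)$ presupposes $p$ even, whereas the lemma is stated for all $1<p<\infty$.

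The paper sidesteps the need for any global control on $\|z\|$: it partitions $[0,1]$ into finitely many subintervals, each short enough that the local existence theorem (with uniform constants coming from Proposition~\ref{seno} and the bound $\|Q(\cdot)\|_p\le 2\|\cdot\|_p$) applies afresh on every piece, and then concatenates the solutions into a piecewise $C^1$ curve. This is exactly why the conclusion is only ``piecewise $C^1$'' rather than $C^1$. One small remark in your favour: from Lemma~\ref{expo} one actually gets $\dot u_\Gamma=e^zF(\ad z)\dot z=u_\Gamma v$, hence $u_\Gamma^*\dot u_\Gamma=v$ rather than $\dot u_\Gamma u_\Gamma^*=v$; since all that is used downstream is $\|\dot u_\Gamma\|_p=\|v\|_p$, the discrepancy is harmless.
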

\begin{proof}
Assume first that $\Gamma$ is $C^1$ in the whole $[0,1]$. Let $R_0=\max\limits_{t\in \overline{J}}\|\dot{\Gamma}\|_p$, where $J$ is an open interval containing $[0,1]$ where $\Gamma$ is differentiable. Let $0<R<\frac{\pi}{2}$. Then if $x\in {\cal G}\cap B(0,R)$, the map $F(\ad x)$ is invertible by the previous lemma,  its inverse is analytic and can be written as a power series in $\ad x$, hence 
$$
F(\ad x)^{-1}:{\cal G}\to {\cal G}
$$
because ${\cal G}$ is a Banach-Lie algebra. Moreover, since $g$ is increasing,
$$
\|F(\ad x)^{-1}\|\le g(\|x\|_p)\le g(R).
$$
Let $f:J\times B(0,R)\cap {\cal G}\to {\cal G}$ be given by
$$
f(t,x)=-F(\ad x)^{-1}Q_{\cal G}(\Gamma^*(t)\dot{\Gamma}(t)).
$$
Then $f$ is continuous since $Q$ and $F^{-1}$ are continuous, moreover
$$
\|f(t,x)\|_p\le \|F(\ad x)^{-1}\| \, 2\|\dot{\Gamma}(t)\|_p\le g(R)2R_0=L
$$
by Remark \ref{proy} and the previous lemma. Since $H(\ad x)=F(\ad x)^{-1}$ is analytic in the ball $\|x\|_p< \frac{\pi}{2}$, we have 
$$
\|H(\ad x)-H(\ad y)\|\le C(R) \|\ad x-\ad y\|\le 2C(R)\|x-y\|_p
$$
where $C(R)$ is the bound for $H'$ in $\|z\|_p\le R$. Then
$$
\|f(t,x)-f(t,y)\|_p\le 4C(R)R_0\|x-y\|_p=K\|x-y\|_p.
$$
Then $f$ satisfies a Lipschitz condition, uniformly respect to $t\in J$, hence by Proposition 1.1 of Ch. IV in \cite{lang}, there exists a continuous solution $z_0: (-b,b)\times B(0,R/4)\to {\cal G}\cap B(0,R)$ of the integral equation
$$
z(t)=\int_0^t f(s,z(s))\,ds
$$
with $z_0(0)=0$. Here $b$ is any real number 
$$
0<b<\frac{R}{4LK}=\frac{\sin(R)}{32C(R)R_0^2}
$$
Note that $z_0$ is in fact $C^1$. Differentiating both sides and multiplying by $F(\ad z(t))$ gives the equation stated. We have proved so far that the equation 
$$
F(\ad z)\dot{z}=-Q(\Gamma^*\dot{\Gamma})
$$
has a local solution defined around zero. By a standard argument, it follows that one can find a piecewise $C^1$ solution defined on the whole interval $[0,1]$: let $N\in \mathbb N$ such that $\frac{1}{N}<b$ and let $t_k=\frac{k}{N}$. Then $[t_k,t_{k+1}]$ ($k=0,1,\cdots N$) is a partition of $[0,1]$ such that the integral equation
$$
z(t)=\int_{t_k}^{t_{k+1}} f(s,z(s))\,ds
$$
with the initial conditions $z_0(0)=0$, $z_k(t_k)=z_{k-1}(t_k)$ for $k\ge 1$, has a solution $z_k:[t_k,t_{k+1}]\to \cal G$. Then the curve $z_1 \sharp z_2\sharp\cdots\sharp z_N$ is a piecewise $C^1$ solution of the equation defined in the whole $[0,1]$. If $\Gamma$ is piecewise $C^1$ instead of $C^1$, one might replace the argument above for a similar argument in each of the intervals where $\Gamma$ is $C^1$, and use the continuity of $\Gamma$ to state the boundary conditions for $z$.

If $u_{\Gamma}(t)=e^{z(t)}$, then 
$$
\dot{u}_{\Gamma}(t)=d\exp_{z(t)}(\dot{z}(t))=\int_0^1 e^{sz(t)}\dot{z}(t) e^{-sz(t)}ds\, u_{\Gamma}(t)=F(\ad z(t))\dot{z}(t) u_{\Gamma}(t)
$$
by Lemma \ref{expo}. Then $\dot{u}=F(\ad z)\dot{z}\,u$, and hence $\dot{u}u^*=-Q(\Gamma^*\dot{\Gamma})$. Thus
$$
\|\dot{u}\|_p=\|Q(\Gamma^*\dot{\Gamma})\|_p\le 2 \|\Gamma^*\dot{\Gamma}\|_p=2\| \dot{\Gamma}\|_p,
$$
and therefore $L_p(u)\le 2L_p(\Gamma)$.
\end{proof}

\begin{prop}\label{lift}
Let $x_0\in \oo$,  $\gamma=\Gamma\cdot x_0\subset \oo$ a $C^1$ curve defined in an interval containing $[0,1]$. Then $\gamma$ admits a piecewise $C^1$ lift $\beta\subset\upe$ (that is $\beta\cdot x_0=\gamma$) such that $L(\gamma)=L_p(\beta)\le L_p(\Gamma)$. We shall call $\beta$ an \textbf{isometric lift} of $\gamma$.
\end{prop}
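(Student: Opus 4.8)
The plan is to correct $\Gamma$ by the auxiliary curve furnished by the preceding lemma. Let $Q=Q_{\g_{x_0}}$ be the best-approximant projection onto $\g_{x_0}$, let $u_\Gamma=e^{z}:[0,1]\to G_{x_0}$ be the piecewise $C^1$ curve with $u_\Gamma(0)=1$ satisfying $\dot{u}_\Gamma u_\Gamma^*=-Q(\Gamma^*\dot{\Gamma})$, and set $\beta=\Gamma\,u_\Gamma$. Since $u_\Gamma(t)\in G_{x_0}$ fixes $x_0$, we get $\beta(t)\cdot x_0=\Gamma(t)u_\Gamma(t)\cdot x_0=\Gamma(t)\cdot x_0=\gamma(t)$, so $\beta$ is a lift of $\gamma$; it is piecewise $C^1$ because $\Gamma$ is $C^1$ and $u_\Gamma$ is piecewise $C^1$.

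Next I would compute the left logarithmic derivative of $\beta$. Writing $u=u_\Gamma$ and using $\dot{u}=-Q(\Gamma^*\dot{\Gamma})\,u$,
\[
\beta^*\dot{\beta}=u^*(\Gamma^*\dot{\Gamma})u+u^*\dot{u}=u^*\big(\Gamma^*\dot{\Gamma}-Q(\Gamma^*\dot{\Gamma})\big)u=u^*\,\bar{Q}(\Gamma^*\dot{\Gamma})\,u,
\]
where $\bar{Q}=1-Q$. Since $u$ is a unitary operator and the $p$-norm is unitarily invariant, $\|\dot{\beta}\|_p=\|\bar{Q}(\Gamma^*\dot{\Gamma})\|_p$. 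Taking $y=0$ in the defining inequality of the best approximant gives $\|\bar{Q}(w)\|_p=\|w-Q(w)\|_p\le\|w\|_p$, so $\|\dot{\beta}\|_p\le\|\Gamma^*\dot{\Gamma}\|_p=\|\dot{\Gamma}\|_p$, and integrating over $[0,1]$ yields $L_p(\beta)\le L_p(\Gamma)$.

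It remains to check $L(\gamma)=L_p(\beta)$, that is, $\|\dot{\gamma}(t)\|_{\gamma(t)}=\|\bar{Q}(\Gamma^*\dot{\Gamma})(t)\|_p$ pointwise. Starting from $\pi_{x_0}=\ell_{\Gamma(t)}\circ\pi_{x_0}\circ L_{\Gamma(t)^*}$ and differentiating at $\Gamma(t)$, one obtains
\[
\dot{\gamma}(t)=(d\pi_{x_0})_{\Gamma(t)}(\dot{\Gamma}(t))=(d\ell_{\Gamma(t)})_{x_0}\big((d\pi_{x_0})_1(\Gamma(t)^*\dot{\Gamma}(t))\big).
\]
By the isometric invariance of the metric under the action, $\|\dot{\gamma}(t)\|_{\gamma(t)}=\|(d\pi_{x_0})_1(\Gamma(t)^*\dot{\Gamma}(t))\|_{x_0}$; and by the definition of the quotient norm together with $\g_{x_0}=\ker(d\pi_{x_0})_1$ this equals $\inf\{\|\Gamma(t)^*\dot{\Gamma}(t)-y\|_p:y\in\g_{x_0}\}=\|\bar{Q}(\Gamma^*\dot{\Gamma})(t)\|_p$. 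Integrating, $L(\gamma)=L_p(\beta)\le L_p(\Gamma)$, which is the assertion.

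The only delicate point is this last paragraph: relating $(d\pi_{x_0})_{\Gamma(t)}$ to $(d\pi_{x_0})_1$, invoking the isometry of the $\upe$-action on tangent spaces, and recognizing the best-approximant distance as the quotient norm. The rest is the algebra of the corrected lift $\beta=\Gamma u_\Gamma$ and the contraction property $\|\bar{Q}(w)\|_p\le\|w\|_p$.
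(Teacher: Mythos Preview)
Your argument is correct and follows the paper's approach exactly: define $\beta=\Gamma u_\Gamma$ with $u_\Gamma$ from the preceding lemma, compute $\|\dot\beta\|_p=\|\Gamma^*\dot\Gamma-Q(\Gamma^*\dot\Gamma)\|_p$, and recognize this as the quotient norm $\|\dot\gamma\|_\gamma$. Your last paragraph spelling out why this minimum equals $\|\dot\gamma\|_{\gamma}$ via the invariance of the metric is in fact more explicit than the paper, which simply asserts $L(\gamma)=L_p(\beta)$ from the identity $\|\dot\beta\|_p=\min_{y\in\g_{x_0}}\|\Gamma^*\dot\Gamma-y\|_p$.
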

\begin{proof}
Let $u=u_{\Gamma}=e^z$ be the curve of the previous lemma. Then $\beta=\Gamma \, u_{\gamma}$ is a lift of $\gamma$ because $u\in G$. Moreover,
\begin{eqnarray}
\|\dot\beta\|_p & = & \|\dot{\Gamma}u+\Gamma \dot{u}\|_p=
\| \Gamma^*\dot{\Gamma}+ \dot{u}u^*\|_p  =  \|\Gamma^*\dot{\Gamma}-Q(\Gamma^*\dot{\Gamma})\|_p\nonumber\\
& =&\min\limits_{y\in{\cal G}} \|\Gamma^*\dot{\Gamma}-y\|_p\le \|\Gamma^*\dot{\Gamma}\|_p=\|\dot{\Gamma}\|_p,\nonumber
\end{eqnarray}
hence $L(\gamma)=L_p(\beta)\le L_p(\Gamma)$.
\end{proof}

\begin{teo}
Let $p$ be a positive even integer, $x\in\oo$, $X\in (T\oo)_x$ and $z_0\in\bpe_{ah}$  a minimal lifting for $X$. Then the curve
$$
\delta(t)=e^{t z_0}\cdot x,
$$
which verifies $\delta(0)=x$ and $\dot{\delta}(0)=X$, has minimal length  $\|z_0\|_p$ in the interval $[0,1]$ if $\|z_0\|_p<\pi/4$. Moreover, the curve $\delta$ is unique with this property, in the sense that if $\gamma\subset {\cal O}$ is another curve joining $x$ to $e^{z_0}\cdot x$ of length $\|z_0\|_p$, then $\gamma(t)=e^{tz_0}\cdot x$.
\end{teo}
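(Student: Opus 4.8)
The plan is to transfer the problem to the group $\upe$ by means of isometric lifts (Proposition \ref{lift}), to recognise the minimal--lifting condition of Lemma \ref{perpep} as a first--order criticality condition for the $p$--distance, and then to use the strict convexity of that distance (Theorem \ref{teoconvexidad1}) to promote criticality to strict minimality. We may assume $X\neq 0$, since otherwise $z_0=0$ and $\delta$ is constant. First note that $\delta$ lifts isometrically to the one--parameter group $e^{tz_0}\subset\upe$: one has $\pi_x(e^{tz_0})=\delta(t)$, $L_p(e^{tz_0}|_{[0,1]})=\|z_0\|_p$, and $L(\delta|_{[0,1]})=\|X\|_x=\|z_0\|_p$ (by isometry of the action and Lemma \ref{perpep}); moreover $\|z_0\|\le\|z_0\|_p<\pi/4<\pi$, so by Theorem \ref{minimalidadunitarios} $e^{tz_0}$ is the unique minimal geodesic of $\upe$ from $1$ to $e^{z_0}$. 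Given any competitor $\gamma\subset\oo$ joining $x$ and $y_0:=\delta(1)$, Proposition \ref{lift} provides an isometric lift $\Gamma\subset\upe$ with $L_p(\Gamma)=L(\gamma)$; here $\Gamma(0)\in\pi_x^{-1}(x)=G_x$ and $\Gamma(1)\in\pi_x^{-1}(y_0)=e^{z_0}G_x$. Since $d_p$ is bi--invariant and $Ad_{g}$ ($g\in G_x$) carries minimal liftings to minimal liftings of the same $p$--norm (because $Ad_g$ is an algebra automorphism preserving $\g_x$), after translating by $\Gamma(0)^{-1}$ the problem reduces to the single metric statement

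\smallskip
$(\ast)$\quad if $z\in\bpe_{ah}$ is a minimal lifting with $\|z\|_p<\pi/4$, then $d_p(1,h)\ge\|z\|_p$ for every $h\in e^{z}G_x$, with equality only for $h=e^{z}$.
\smallskip

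Indeed $(\ast)$ gives $L(\gamma)=L_p(\Gamma)\ge d_p(\Gamma(0),\Gamma(1))\ge\|z_0\|_p=L(\delta)$; and if equality holds throughout then $\Gamma$ is a minimal $\upe$--geodesic ending at the nearest point of its target fibre, which by Theorem \ref{minimalidadunitarios} (note $\|1-e^{z}\|<2$) forces $\Gamma(t)=e^{tz_0}\Gamma(0)$ and hence $\gamma=\pi_x\circ\Gamma=\delta$.

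\smallskip

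The core of $(\ast)$ is the following. For $y\in\g_x$, plug the variation $\beta_s=e^{z}e^{sy}$, with associated minimal geodesics $\gamma_s(t)=e^{t\log\beta_s}$, into the first variation formula for $F_p$ used in the proof of Theorem \ref{minimalidadunitarios}: since $\gamma_s$ is a one--parameter group the integral term vanishes, and the boundary term at $t=0$ vanishes too ($\gamma_s(0)=1$), leaving
$$
\frac{d}{ds}\Big|_{s=0}d_p\big(1,e^{z}e^{sy}\big)^p=(-1)^{p/2}\,p\,Tr\big(z^{\,p-1}y\big),
$$
which is $0$ for every $y\in\g_x$ \emph{exactly because} $z$ is a minimal lifting (Lemma \ref{perpep}). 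Thus $e^{z}$ is a critical point of $h\mapsto d_p(1,h)^p$ along the fibre $e^{z}G_x$. Next fix $y\in\g_x$, $y\neq 0$, with $\|y\|_p$ small: the geodesic $\rho(s)=e^{z}e^{sy}$ ($s\in[0,1]$) is nonconstant, lies inside the fibre and inside $B_p(1,\tfrac{\pi}{2})$, and $1$ does not lie on any prolongation of $\rho$ (the alternative would give $e^{-z}=e^{sy}\in G_x$, whence --- $\exp$ being injective near $0$ and $G_x$ a submanifold there --- $z\in\g_x$, contradicting $z\perp_p\g_x$, $z\neq 0$). Theorem \ref{teoconvexidad1} then makes $f_p(s)=d_p(1,\rho(s))^p$ strictly convex, so $f_p'(0)=0$ forces $s=0$ to be its strict minimum on $[0,1]$ and $d_p(1,e^{z}e^{y})>\|z\|_p$. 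Since $G_x$ is locally exponential, every $g\in G_x$ near $1$ is such an $e^{y}$; hence $e^{z}$ is a strict local minimum of $d_p(1,\cdot)$ on the fibre.

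\smallskip

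It remains to globalise $(\ast)$ over $B_p(1,\tfrac{\pi}{4})$ --- which is all that matters, since a competitor of length $\ge\pi/4$ is already longer than $\delta$ --- and this is the step I expect to be the main obstacle, because the fibre $e^{z}G_x$ is in general neither geodesically nor even path connected (the exponential of $\g_x$ need not cover the identity component of $G_x$). When $h=e^{z}g$ with $g$ close enough to $1$ that the minimal $\upe$--geodesic from $1$ to $g$ lies inside $G_x$ (local geodesic convexity of $G_x$), that geodesic is $e^{s\log g}$ with $\log g\in\g_x$, so $\rho(s)=e^{z}e^{s\log g}$ lies in the fibre and, by Corollary \ref{teoconvexidad2} applied to $1,e^{z},h\in B_p(1,\tfrac{\pi}{4})$, inside $B_p(1,\tfrac{\pi}{2})$; Theorem \ref{teoconvexidad1} together with $f_p'(0)=(-1)^{p/2}p\,Tr(z^{p-1}\log g)=0$ then gives $d_p(1,h)>\|z\|_p$ outright. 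For a general $h$ one propagates this along $\delta$ by a continuity/connectedness argument: one checks that $T^{\ast}:=\sup\{t\in[0,1]:\ d(x,\delta(s))=s\|z_0\|_p\ \text{for all }s\le t\}$ equals $1$. The interval $[0,T^{\ast}]$ is clearly closed; openness at $t<1$ uses that $z_0$ is \emph{again} a minimal lifting of $\dot\delta(t)$ at $\delta(t)$ with $\|\dot\delta(t)\|_{\delta(t)}=\|z_0\|_p$ --- which holds because $z_0$ commutes with $e^{tz_0}$, so that $Ad_{e^{tz_0}}$ fixes $z_0$ and maps $\g_x$ onto $\g_{\delta(t)}$ --- together with the radius of convexity $\tfrac{\pi}{4}$ of Corollary \ref{teoconvexidad2}; this is where the hypothesis $\|z_0\|_p<\pi/4$ enters decisively. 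Once $T^{\ast}=1$ we obtain $(\ast)$ for $z_0$, and with it the minimality of $\delta$ and, by the argument above, its uniqueness.
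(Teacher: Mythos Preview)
Your core strategy --- lift isometrically to $\upe$ via Proposition \ref{lift}, recognise the minimal--lifting condition of Lemma \ref{perpep} as the vanishing $f_p'(0)=0$, and invoke the strict convexity of Theorem \ref{teoconvexidad1} --- is exactly the paper's. But the paper executes it in one stroke, without the globalisation you attempt in the last paragraph.

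Once the competitor $\gamma$ is lifted to $\beta$ with $\beta(0)=1$ and $\beta(1)=e^z\in e^{z_0}G_x$, the paper does not aim at your uniform statement $(\ast)$ over the whole fibre. It simply splits: if $\|z\|_p>\pi/4$ then already $L(\gamma)\ge\|z\|_p>\|z_0\|_p$; if $\|z\|_p\le\pi/4$ then both $e^{z_0}$ and $e^z$ lie in $B_p(1,\pi/4)$, so the single geodesic $\nu(s)=e^{z_0}e^{sy}$ (with $y\in\g_x$, using the locally--exponential hypothesis on $G_x$) stays in $B_p(1,\pi/2)$ by Corollary \ref{teoconvexidad2}, Theorem \ref{teoconvexidad1} applies to $f_p(s)=d_p(1,\nu(s))^p$, and $f_p'(0)=(-1)^{p/2}p\,Tr(z_0^{p-1}y)=0$ gives $\|z_0\|_p^p=f_p(0)\le f_p(1)=\|z\|_p^p$. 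Strict convexity then turns equality into $e^z=e^{z_0}$, forcing $\beta$ to be the unique $\upe$--geodesic $e^{tz_0}$ and hence $\gamma=\delta$. This is precisely your ``$g$ close to $1$'' paragraph, applied once to the actual endpoint rather than to every fibre element.

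Your $T^\ast$ argument, on the other hand, does not close. Knowing that $z_0$ is again a minimal lifting at each $\delta(t)$ yields only \emph{local} minimality of $\delta$ near $t$; the triangle inequality then gives $d(x,\delta(t+\varepsilon))\le(t+\varepsilon)\|z_0\|_p$, never the reverse inequality that openness of $\{t:d(x,\delta(t))=t\|z_0\|_p\}$ requires. A locally length--minimising curve need not be globally minimising without extra convexity input, and nothing in your sketch supplies that input at the level of $\oo$. The concern you raise --- whether $e^{-z_0}e^z$ has a logarithm in $\g_x$ --- is legitimate, but the paper simply invokes the standing locally--exponential hypothesis for this step rather than trying to bypass it.
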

\begin{proof}
Let $\gamma$ be a smooth curve in $\oo$ with $\gamma(0)=x$ and $\gamma(1)=e^{z_0}\cdot x$.
Denote by $\beta$ as above an isometric lift of $\gamma$. Note that the curve $\epsilon(t)=e^{tz_0}$ is an isometric lift for $\delta$. 
Then it suffices to compare $\beta$ and $\epsilon$ (note that both curves start at $1$). There exists in $\upe$ a minimal curve $\alpha(t)=e^{tz}$ with $e^z=\beta(1)$, with $L_p(\alpha)\le L_p(\beta)$. We claim that $L_p(\epsilon)\le L_p(\alpha)$, a fact which ends the proof.
If $\|z\|_p=L_p(\alpha)>\pi/4$, this fact is clear. Suppose that $\|z\|_p\le \pi/4$. Let $\nu(t)=e^{z_0}e^{ty}$ be the minimal geodesic of $\upe$, lying inside $e^{z_0} G_x$ (i.e. $y\in\g_x$), connecting $e^{z_0}$ to $e^{z}$. Then by  Theorem \ref{teoconvexidad1}, the map $f_p(s)=d_p^p(1,\nu(s))$ is convex. We claim that $f_p'(0)=0$, and thus 
$$
L_p(\epsilon)^p=d_p(1,\nu(0))^p=f_p(0)\le f_p(1)=d_p(1,\nu(1))^p=L_p(\alpha)^p.
$$
As in the proof of \ref{teoconvexidad1}, $f_p'(0)=(-1)^{p/2} Tr(z_0^{p-1}y)$, which vanishes by Lemma \ref{perpep}, because $z_0$ is a minimal lift. If $L(\gamma)=\|z_0\|_p$ (i.e. if $\gamma$ is also short), then
$$
f_p(1)=\|z\|_p^p\le L_p(\beta)^p=L(\gamma)^p=\|z_0\|_p^p=f_p(0)
$$
and then $z=z_1$ because $f_p$ is strictly convex. In particular $\beta(1)=e^{z_0}$ and $L_p(\beta)=L_p(\epsilon)=\|z_0\|_p$. Since $\|z_0\|\le\|z_0\|_p<\pi/2$, the curve $\epsilon$ is the unique short geodesic joining $1$ to $e^{z_0}$ in $\upe$,  and then $\beta=\epsilon$.
\end{proof}

\section{Completeness of the metric spaces ${\cal O}$}

We prove that the space ${\cal O}$ is a  complete metric space with the rectifiable metric. Let us prove first an inequality in $\upe$ relating the distance among two geodesics with the distance of the endpoints. Throughout we assume that $p$ is a positive even integer.

\begin{teo}
Let $g(r)=\displaystyle\frac{r}{\sin(r)}$. Let $u,v,w\in \upe$ with $v,w\in B_p(u,r_0)$ and $r_0\in [0,\frac{\pi}{4}]$. Let $\gamma$ be the short geodesic joining $v$ to $w$. Let $\alpha$ (resp. $\beta$) be the short geodesic joining $u$ to $v$ (resp. $u$ to $w$). Let $\gamma_t$ be the short geodesic joining $\alpha(t)$ with $\beta(t)$. Then
$$
L_p(\gamma_t)\le t\, g(r_0)\,  L_p(\gamma)\le \frac{\pi\,t}{2\sqrt{2}} \,L_p(\gamma)
$$
for any $t\in [0,1]$.
\end{teo}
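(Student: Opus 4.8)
The plan is to reduce immediately to $u=1$, since left translation by $u^{-1}$ is an isometry of $(\upe,d_p)$ which carries short geodesics to short geodesics. Writing $v=e^a$, $w=e^b$ for the principal logarithms (legitimate because $\|v-1\|\le\|v-1\|_p\le d_p(1,v)<r_0<2$ forces $-1\notin\sigma(v)$, so $\|a\|,\|b\|<\pi$), one has $\alpha(t)=e^{ta}$, $\beta(t)=e^{tb}$, $\|a\|_p=d_p(1,v)<r_0$, $\|b\|_p=d_p(1,w)<r_0$, and $L_p(\gamma_t)=d_p(e^{ta},e^{tb})$ because $\gamma_t$ is minimal. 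Setting $e^y=e^{-a}e^b$ with $y$ the principal logarithm, the short geodesic is $\gamma(s)=e^ae^{sy}$, $s\in[0,1]$, with $\|y\|\le\|y\|_p=d_p(v,w)=L_p(\gamma)<2r_0$. If $v=w$ then $a=b$ and every curve in sight is constant, so assume $v\neq w$.

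The crux is to confine the geodesic $\gamma$ to the ball $B_p(1,r_0)$. The two triangle estimates $d_p(1,\gamma(s))\le d_p(1,v)+s\|y\|_p$ and $d_p(1,\gamma(s))\le d_p(1,w)+(1-s)\|y\|_p$ already give $d_p(1,\gamma(s))\le r_0+2r_0\min(s,1-s)<\pi/2$, so $\gamma\subset B_p(1,\pi/2)$. Hence $f_p(s)=d_p(1,\gamma(s))^p$ is convex: by Theorem~\ref{teoconvexidad1} if $1$ lies on no prolongation of $\gamma$, and otherwise by a direct computation (in that case $\gamma(s)=e^{w(s)}$ with $w$ affine and $\|w(s)\|<\pi$, so $f_p(s)=\|w(s)\|_p^p$ is a convex increasing function of the convex map $s\mapsto\|w(s)\|_p$). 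Since $f_p(0)=\|a\|_p^p$ and $f_p(1)=\|b\|_p^p$ are both $<r_0^p$, convexity yields $d_p(1,\gamma(s))<r_0$ for every $s\in[0,1]$.

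Now I would lift $\gamma$ and estimate the lift. Because $\|\gamma(s)-1\|\le d_p(1,\gamma(s))<2$, there is a smooth curve $c\colon[0,1]\to\bpe_{ah}$ with $e^{c(s)}=\gamma(s)$ and $\|c(s)\|<\pi$; since $\tau\mapsto e^{\tau c(s)}$ is then minimal, $\|c(s)\|_p=d_p(1,\gamma(s))<r_0$, and $c(0)=a$, $c(1)=b$. Differentiating $e^{c(s)}=e^ae^{sy}$, Lemma~\ref{expo} gives $\frac{d}{ds}e^{c(s)}=e^{c(s)}F(\ad c(s))\dot c(s)$ (the differential invertible because $\|c(s)\|<\pi$), while $\frac{d}{ds}(e^ae^{sy})=e^{c(s)}y$; comparing yields the clean identity $\dot c(s)=F(\ad c(s))^{-1}y$. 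By Proposition~\ref{seno}, $\|F(\ad c(s))^{-1}\|\le g(\|c(s)\|_p)\le g(r_0)$, so $\|\dot c(s)\|_p\le g(r_0)\|y\|_p$. Finally the curve $\tilde\gamma_t(s):=e^{t\,c(s)}$ joins $e^{tc(0)}=\alpha(t)$ to $e^{tc(1)}=\beta(t)$, and $t\,c(s)\in\bpe_{ah}$, so by the contraction property in Lemma~\ref{expo},
$$
L_p(\gamma_t)=d_p(\alpha(t),\beta(t))\le L_p(\tilde\gamma_t)=\int_0^1\|d\exp_{tc(s)}(t\dot c(s))\|_p\,ds\le t\int_0^1\|\dot c(s)\|_p\,ds\le t\,g(r_0)\,L_p(\gamma),
$$
and the last expression is $\le\frac{\pi t}{2\sqrt2}L_p(\gamma)$ because $g$ is increasing on $[0,\pi)$ with $g(\pi/4)=\frac{\pi}{2\sqrt2}$.

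The step I expect to be the real obstacle is the confinement $d_p(1,\gamma(s))<r_0$: a naive lift only keeps $\|c(s)\|_p$ below the crude bound $2r_0$, which would yield the weaker constant $g(2r_0)$ instead of the stated $g(r_0)$. Extracting the sharp factor genuinely needs the convexity of $d_p(1,\gamma(\cdot))^p$ from Theorem~\ref{teoconvexidad1}, plus an honest treatment of the degenerate alignment case excluded there and of the borderline value $r_0=\pi/4$ (handled throughout by replacing $r_0$ with $r_1:=\max\{d_p(1,v),d_p(1,w)\}<\pi/4$).
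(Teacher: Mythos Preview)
Your proof is correct and follows essentially the same route as the paper: after reducing to $u=1$, the paper also compares $\gamma_t$ with the curve $\gamma^t(s)=e^{t\log(\gamma(s))}$ (your $\tilde\gamma_t$), bounds its speed via the contraction property of $d\exp$ together with $\|F(\ad c)^{-1}\|\le g(\|c\|_p)$ from Proposition~\ref{seno}, and then uses the geodesic convexity of balls (Corollary~\ref{teoconvexidad2}) to force $\|\log(\gamma(s))\|_p<r_0$. Your write-up is in fact a bit more careful than the paper's, since you spell out the confinement step via Theorem~\ref{teoconvexidad1} directly and explicitly dispose of the aligned case and the boundary value $r_0=\pi/4$, which the paper leaves implicit.
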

\begin{proof}
We may suppose $u=1$ without loss of generality.  Let $\gamma^t=e^{t log(\gamma)}$. 
Since $\gamma^t(0)=\alpha(t)$ and $\gamma^t(1)=\beta(t)$, and  $\gamma_t$ is a short geodesic joining these same endpoints, one has the inequality
$$
L_p(\gamma_t)\le L_p(\gamma^t).
$$
Let us use the dot to denote the derivative with respect to the $s$ variable. Then
$$
\dot{\gamma^t}=d\exp_{t log(\gamma)}(t\, d log_{\gamma}(\dot{\gamma}))=t\, d\exp_{t\ln(\gamma)}(d\exp_{\gamma}^{-1}(\dot{\gamma})),
$$
hence
$$
\|\dot{\gamma^t}\|_p\le\,t\, \|d\exp_{\gamma}^{-1}(\dot{\gamma})\|_p
$$
by Lemma \ref{expo}, since  the differential of the exponential map is a contraction. By the same lemma,
\begin{equation}\label{ecua}
\|\dot{\gamma^t}\|_p\le t\,\|F(\ad(log(\gamma))^{-1} \dot{\gamma}\|_p\le t\, g(\|log(\gamma)\|)\,\|\dot{\gamma}\|_p
\end{equation}
where the last inequality is due to Proposition \ref{seno}. Now by Corollary \ref{teoconvexidad2}, $\|log(\gamma)\|\le \|log(\gamma)\|_p<r_0<\frac{\pi}{4}$, and since $g$ is increasing in $[0,\pi)$, the term $g(\|log(\gamma)\|)$ is bounded by $g(r_0)$, which in turn is bounded by $g(\frac{\pi}{4})=\frac{\pi}{2\sqrt{2}}$. Integrating (\ref{ecua}) with respect to the variable $s$ in $[0,1]$ gives the inequalities for the $p$-lengths.
\end{proof}

\begin{coro}\label{wsp}
Let $u_1,u_2,u_3\in \upe$ such that $d_p(u_i,u_j)<r_0\le \frac{\pi}{4}$, $u_2=u_1e^x$, $u_3=u_1e^y$. Then
$$
|d_p(u_1,u_2)-d_p(u_1,u_3)|\le\|x-y\|_p\le g(r_0) \, d_p(u_2,u_3).
$$
\end{coro}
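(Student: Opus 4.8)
The plan is to derive both inequalities from results already established, with essentially no new computation: the first is the reverse triangle inequality for $\|\cdot\|_p$ once the two distances are identified with norms of logarithms, and the second is read off from the preceding theorem by an infinitesimal ($t\to 0$) argument. I would first normalize $u_1=1$; this is harmless because $d_p$ and the length functional $L_p$ are left-invariant ($\|g\dot\gamma\|_p=\|\dot\gamma\|_p$ for $g$ unitary), so replacing $u_i$ by $u_1^*u_i$ changes nothing. Since $\|u_1^*u_2-1\|=\|u_2-u_1\|\le d_p(u_1,u_2)<\frac\pi4<2$, Remark \ref{remarko} furnishes a unique skew-hermitian $x$ with $\|x\|<\pi$ and $e^x=u_1^*u_2$ --- this is the $x$ appearing in the statement --- and likewise $y$; by Theorem \ref{minimalidadunitarios}(1) the curve $t\mapsto e^{tx}$, $t\in[0,1]$, is then the unique short geodesic from $1$ to $u_2$, so that $\|x\|_p=d_p(1,u_2)$ and $\|y\|_p=d_p(1,u_3)$. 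The first inequality is now just $\bigl|\,\|x\|_p-\|y\|_p\,\bigr|\le\|x-y\|_p$.

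For the second inequality I would invoke the preceding theorem with $u=1$, $v=u_2=e^x$, $w=u_3=e^y$; its hypothesis $v,w\in B_p(1,r_0)$ with $r_0\le\frac\pi4$ is exactly what we assume. Taking $\alpha(t)=e^{tx}$, $\beta(t)=e^{ty}$ as the short geodesics out of $1$ and $\gamma_t$ as the short geodesic joining $\alpha(t)$ and $\beta(t)$, the theorem gives
$$
d_p(e^{tx},e^{ty})=L_p(\gamma_t)\le t\,g(r_0)\,L_p(\gamma)=t\,g(r_0)\,d_p(u_2,u_3),\qquad t\in(0,1].
$$
It then remains to check that $t^{-1}d_p(e^{tx},e^{ty})\to\|x-y\|_p$ as $t\to0^+$. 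For $t$ small one has $d_p(e^{tx},e^{ty})=d_p(1,e^{-tx}e^{ty})=\|\log(e^{-tx}e^{ty})\|_p$ (by Remark \ref{remarko} and Theorem \ref{minimalidadunitarios}(1), once $e^{-tx}e^{ty}$ is close to $1$), and $c(t):=\log(e^{-tx}e^{ty})$ is a smooth $\bpe_{ah}$-valued curve with $c(0)=0$ and $\dot c(0)=y-x$, so $t^{-1}\|c(t)\|_p=\|t^{-1}c(t)\|_p\to\|y-x\|_p$. Dividing the displayed inequality by $t$ and letting $t\to0^+$ yields $\|x-y\|_p\le g(r_0)\,d_p(u_2,u_3)$.

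The only points demanding care in this route are the identification of $x,y$ with the canonical small logarithms --- so that $\|x\|_p,\|y\|_p$ genuinely equal the geodesic distances and $\alpha,\beta$ really are $e^{tx},e^{ty}$ --- and the routine expansion $\log(e^{-tx}e^{ty})=t(y-x)+o(t)$; there is no serious obstacle. If one prefers to sidestep the limit, the same bound follows head-on: with $\gamma$ the short geodesic from $u_2$ to $u_3$ (still $u_1=1$), put $c(s)=\log\gamma(s)$, so $c(0)=x$, $c(1)=y$ and, by Lemma \ref{expo}, $\dot c(s)=e^{-c(s)}F(\ad c(s))^{-1}\dot\gamma(s)$, whence $\|\dot c(s)\|_p\le\|F(\ad c(s))^{-1}\|\,\|\dot\gamma(s)\|_p\le g(\|c(s)\|)\,\|\dot\gamma(s)\|_p$ by Proposition \ref{seno}; since $s\mapsto d_p(1,\gamma(s))^p$ is convex (Theorem \ref{teoconvexidad1}, whose hypothesis $\gamma\subset B_p(1,\frac\pi2)$ holds by the estimate in the proof of Corollary \ref{teoconvexidad2}), one gets $\|c(s)\|\le d_p(1,\gamma(s))<r_0$, so monotonicity of $g$ gives $\|\dot c(s)\|_p\le g(r_0)\|\dot\gamma(s)\|_p$; integrating, $\|x-y\|_p=\bigl\|\int_0^1\dot c(s)\,ds\bigr\|_p\le g(r_0)L_p(\gamma)=g(r_0)d_p(u_2,u_3)$. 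In this second route the genuinely delicate step --- the \emph{main obstacle} --- is the confinement $d_p(1,\gamma(s))<r_0$: it rests on convexity, and hence forces the degenerate case ($u_1,u_2,u_3$ on one geodesic, or $u_2=u_3$) to be handled separately, where however $d_p(u_2,u_3)=\|x-y\|_p$ and $g\ge1$ settle it at once.
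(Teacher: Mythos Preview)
Your primary argument is correct and matches the paper's proof essentially line for line: normalize $u_1=1$, read off the first inequality as the reverse triangle inequality for $\|\cdot\|_p$, apply the preceding theorem to get $L_p(\gamma_t)=\|\log(e^{tx}e^{-ty})\|_p\le t\,g(r_0)\,d_p(u_2,u_3)$, divide by $t$ and let $t\to 0^+$. The extra care you take in identifying $x,y$ as the canonical small logarithms and in justifying the expansion $\log(e^{-tx}e^{ty})=t(y-x)+o(t)$ only makes the paper's terse proof more explicit; your alternative integration route via $c(s)=\log\gamma(s)$ is a genuine bonus but not needed.
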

\begin{proof}
The first inequality is just the reversed triangle inequality, since $\|x\|_p=d_p(u_1,u_2)$ and $\|y\|_p=d_p(u_1,u_3)$. By the invariance of the metric under left action of the unitary group, we may assume that $u_1=1$. Then for each $t\in [0,1]$, (in the notation of the previous result)
$$
d_p(e^{tx},e^{ty})=\|log(e^{tx}e^{-ty})\|_p=L_p(\gamma_t),$$
which is less or equal than $g(r_0)\,t\, d_p(u_2,u_3)$ by the same proposition. Then
$$
\|\frac{1}{t}log(e^{tx}e^{-ty})\|_p \le g(r_0)\, \, d_p(u_2,u_3),
$$
and taking the limit $t\to 0^+$ gives the result.
\end{proof}

\begin{rem}
Recall Clarkson's inequalities \cite{simon} for the $\bpe$ spaces, $p\in [2,+\infty)$,
$$
2\|x\|_p^p+2\|y\|_p^p\le \|x-y\|_p^p+\|x+y\|_p^p,
$$
for any $x,y\in \bpe$.
\end{rem}

\begin{teo}(Weak semi-parallelogram law)\label{semiparalelogramo}
Let $\gamma$ be a short geodesic in $\upe$ and $u\in \upe$ such that $d_p(u,\gamma)<r_0\le\frac{\pi}{4}$. Then 
$$
\frac{1}{2}g(r_0)\left[d_p^p(u,\gamma(0))+d_p^p(u,\gamma(1))\right]-d_p^p(u,\gamma(1/2))\ge \frac{1}{2^p} L_p(\gamma)^p.
$$
\end{teo}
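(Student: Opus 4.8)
The plan is to reduce to the case $u=1$ using the isometric left action of $\upe$, and then to work with the real analytic function $f_p(s)=d_p(1,\gamma(s))^p$. Writing $W(s)=\log\gamma(s)$ --- legitimate since $\|W(s)\|\le\|W(s)\|_p=d_p(1,\gamma(s))<r_0<\pi$ --- one has $f_p(s)=\|W(s)\|_p^p=(-1)^{p/2}Tr(W(s)^p)$. Put $z=\log(\gamma(0)^*\gamma(1))$ for the constant velocity of the geodesic, so that $\|z\|_p=L_p(\gamma)=:L$ and $\gamma(s)=\gamma(0)e^{sz}$; in particular $\gamma(1/2)$ is the geodesic midpoint of $\gamma(0),\gamma(1)$, with $\gamma(0)=\gamma(1/2)e^{-z/2}$ and $\gamma(1)=\gamma(1/2)e^{z/2}$.

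First I would use Clarkson's inequality at the level of the logarithms. Setting $P=W(0)$, $R=W(1)$, applying Clarkson to $\frac{P+R}{2}$ and $\frac{P-R}{2}$ gives
$$
f_p(0)+f_p(1)=\|P\|_p^p+\|R\|_p^p\ \ge\ 2\Big\|\frac{P+R}{2}\Big\|_p^p+2\Big\|\frac{P-R}{2}\Big\|_p^p .
$$
Since the curve $t\mapsto e^{P+t(R-P)}$ joins $\gamma(0)$ to $\gamma(1)$ and has $p$-length at most $\|R-P\|_p$ (the differential of $\exp$ is a $p$-contraction on skew-hermitian elements, by Lemma \ref{expo}), we get $\|\frac{P-R}{2}\|_p=\frac12\|P-R\|_p\ge\frac12 d_p(\gamma(0),\gamma(1))=L/2$, hence
$$
\frac12\big(f_p(0)+f_p(1)\big)-f_p(1/2)\ \ge\ \Big\|\frac{W(0)+W(1)}{2}\Big\|_p^p-\|W(1/2)\|_p^p+\frac{1}{2^p}L^p .
$$
Adding $\frac{g(r_0)-1}{2}\big(f_p(0)+f_p(1)\big)$ to the previous inequality, the asserted bound reduces to the comparison
$$
\|W(1/2)\|_p^p-\Big\|\frac{W(0)+W(1)}{2}\Big\|_p^p\ \le\ \frac{g(r_0)-1}{2}\big(f_p(0)+f_p(1)\big),
$$
i.e. to bounding how much the logarithm of the geodesic midpoint exceeds the linear average of the logarithms of the endpoints.

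This comparison is the heart of the proof, and I would establish it from the quantitative convexity of $f_p$ already isolated in the proof of Theorem \ref{teoconvexidad1}. There one has $f_p''(s)=H_{W(s)}(\dot W(s),z)$ with $\dot W(s)=F(\ad W(s))^{-1}z$, together with the spherical-comparison estimate
$$
f_p''(s)\ \ge\ \frac{\sin(2\|W(s)\|_p)}{2\|W(s)\|_p}\,Q_{W(s)}(z)\ \ge\ \frac{1}{g(2r_0)}\,Q_{W(s)}(z),
$$
where $Q_a$ is the quadratic form of $H_a$ (the middle inequality uses $\|W(s)\|_p<r_0\le\pi/4$ and the monotonicity of $r/\sin r$; the intermediate step $Q_{W(s)}(\dot W(s))\ge Q_{W(s)}(z)$ follows from $z=\int_0^1 e^{t\,\ad W(s)}\dot W(s)\,dt$ and the triangle inequality for the $H_{W(s)}$-norm, both already used in that proof). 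Combining this with the exact identity $\frac12(f_p(0)+f_p(1))-f_p(1/2)=\frac12\int_0^1\min(s,1-s)f_p''(s)\,ds$ bounds the left-hand side from below by a multiple of $\int_0^1\min(s,1-s)Q_{W(s)}(z)\,ds$. The remaining --- and, I expect, main --- difficulty is to convert this Hilbert-type Hessian quantity $Q_{W(s)}(z)$ into $L^p=\|z\|_p^p$ uniformly along $\gamma$ (precisely where $W(s)$ may be small the estimate is delicate), and to track the numerical constants so that the resulting deficit is absorbed by the single factor $g(r_0)$ multiplying $f_p(0)+f_p(1)$. It is in this last step that the hypothesis $r_0\le\pi/4$ and the exact shape of $g$ are used; everything before it is bookkeeping with Clarkson's inequality, the contractivity of $d\exp$, and the convexity identity for $f_p$.
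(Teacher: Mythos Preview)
Your proposal has a genuine gap: the ``remaining difficulty'' you flag at the end is not a mere bookkeeping step but the entire content, and the route you sketch for it does not close. The quantity $Q_{W(s)}(z)$ scales like $\|W(s)\|_p^{p-2}\|z\|_p^2$ (look at the explicit formula in Remark~\ref{jesiano}), so when $W(s)$ is small there is no uniform lower bound of the form $Q_{W(s)}(z)\ge c\|z\|_p^p$; integrating $\min(s,1-s)\,Q_{W(s)}(z)$ cannot recover the $L^p$ term you need without an additional idea.

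The underlying problem is your choice of center. You translate so that $u=1$ and then try to compare $\|W(1/2)\|_p$ with $\|\tfrac12(W(0)+W(1))\|_p$; this midpoint comparison is genuinely hard because the logarithm is nonlinear. The paper instead translates so that $\gamma(1/2)=1$. Then $\gamma(0)=e^{x}$, $\gamma(1)=e^{-x}$, $u=e^{y}$, and the linear midpoint of the endpoint logarithms is $0=\log\gamma(1/2)$ exactly, so no comparison is needed at all. Clarkson's inequality applied to $x,y$ gives
\[
\tfrac{1}{2^p}L_p(\gamma)^p=\|x\|_p^p\le \tfrac12\big(\|x+y\|_p^p+\|x-y\|_p^p\big)-\|y\|_p^p,
\]
with $\|y\|_p^p=d_p^p(u,\gamma(1/2))$, and then Corollary~\ref{wsp} (applied with base point $\gamma(1/2)$) bounds $\|x\mp y\|_p$ by $g(r_0)\,d_p(u,\gamma(0))$ and $g(r_0)\,d_p(u,\gamma(1))$. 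That is the whole proof; the Hessian analysis of Theorem~\ref{teoconvexidad1} is not invoked.
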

\begin{proof}
We may assume that $\gamma(1/2)=1$. Then $\gamma(0)=e^x$, $\gamma(1)=e^{-x}$ and $u=e^y$ with $x,y\in\bpe_{ah}$. Then, by Clarkson's inequality,
\begin{eqnarray}
\frac{1}{2^p} L_p(\gamma)^p & = &\|x\|_p^p\le \frac12 \left[ \|x+y\|_p^p+\|x-y\|_p^p \right]- \|y\|_p^p\nonumber\\
 &=&\frac12 \left[ \|x+y\|_p^p+\|x-y\|_p^p \right]- d_p^p(u,\gamma(1/2)).\nonumber
\end{eqnarray}
Now apply Corollary \ref{wsp}.
\end{proof}

Let us finish this section by proving completeness of the geodesic distance.

\begin{teo}
The metric space $(\oo,d)$ is complete.
\end{teo}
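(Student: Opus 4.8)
The plan is to deduce completeness of $(\oo,d)$ from the completeness of $(\upe,d_p)$ (Theorem \ref{minimalidadunitarios}), by lifting a Cauchy sequence in $\oo$ — via the isometric liftings provided by Proposition \ref{lift} — to a Cauchy sequence in $\upe$. Fix a Cauchy sequence $\{x_n\}$ in $\oo$. Since a Cauchy sequence converges as soon as it admits a convergent subsequence, I would first pass to a subsequence with $d(x_n,x_{n+1})<2^{-n}$ for all $n$, and fix once and for all the base point $x_0:=x_1$ together with the submersion $\pi=\pi_{x_0}:\upe\to\oo$, $\pi(u)=u\cdot x_0$.

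The first ingredient I would record is that $\pi$ is $1$-Lipschitz from $(\upe,d_p)$ to $(\oo,d)$: for a piecewise smooth curve $\alpha\subset\upe$, the differential computation of Section 2, combined with the defining inequality $\|(d\pi_y)_1(z)\|_y\le\|z\|_p$ of the quotient norm and the isometric action on tangent spaces, gives $\|\frac{d}{dt}\pi(\alpha(t))\|_{\pi(\alpha(t))}\le\|\alpha(t)^*\dot\alpha(t)\|_p=\|\dot\alpha(t)\|_p$; hence $L(\pi\circ\alpha)\le L_p(\alpha)$ and, taking infima over curves, $d(\pi(u),\pi(v))\le d_p(u,v)$ for all $u,v\in\upe$.

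Next I would construct inductively a sequence $u_n\in\upe$ with $\pi(u_n)=x_n$, starting from $u_1=1$. Given $u_n$, I choose a piecewise smooth curve $\gamma_n$ in $\oo$ joining $x_n$ to $x_{n+1}$ with $L(\gamma_n)<d(x_n,x_{n+1})+2^{-n}<2^{-n+1}$, and translate it to $\tilde\gamma_n:=\ell_{u_n^{-1}}\circ\gamma_n$, a curve from $u_n^{-1}\cdot x_n=x_0$ to $u_n^{-1}\cdot x_{n+1}$ of the same length since the action is isometric. Applying Proposition \ref{lift} on each $C^1$ piece and concatenating, I obtain a piecewise $C^1$ isometric lift $\tilde\beta_n\subset\upe$ of $\tilde\gamma_n$, which I normalize so that $\tilde\beta_n(0)=1$ (choosing the initial point of the lift in the fibre $\pi^{-1}(x_0)$, which is possible because $\pi$ is a submersion), with $\tilde\beta_n\cdot x_0=\tilde\gamma_n$ and $L_p(\tilde\beta_n)=L(\tilde\gamma_n)$. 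Setting $u_{n+1}:=u_n\,\tilde\beta_n(1)$ one has $u_{n+1}\cdot x_0=u_n\cdot\tilde\gamma_n(1)=u_n\cdot(u_n^{-1}\cdot x_{n+1})=x_{n+1}$, so $\pi(u_{n+1})=x_{n+1}$ and the induction proceeds. Since $d_p$ is left invariant (the $p$-norm is unitarily invariant), $d_p(u_n,u_{n+1})=d_p(1,\tilde\beta_n(1))\le L_p(\tilde\beta_n)=L(\gamma_n)<2^{-n+1}$; thus $\{u_n\}$ is Cauchy in $(\upe,d_p)$, hence converges to some $u$ by Theorem \ref{minimalidadunitarios}, and $x_n=\pi(u_n)\to\pi(u)$ in $\oo$ by the $1$-Lipschitz property. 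This exhibits a convergent subsequence of $\{x_n\}$, so $(\oo,d)$ is complete.

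I do not expect a serious obstacle here: the substantive analytic work — existence of isometric lifts, contractivity of $d\exp$, and the convexity estimates — has already been carried out in Sections 3 and 4, and the present theorem is essentially a packaging of it. The only points requiring care are bookkeeping ones: translating each successive segment so that it issues from the fixed base point $x_0$, normalizing each lift to start at $1$ so that the lifted pieces glue into a single sequence in $\upe$, and invoking the submersion property to lift with a prescribed initial point. One could alternatively marshal the weak semi-parallelogram law of Theorem \ref{semiparalelogramo} to control the lifted sequence, but it is not needed for the route above.
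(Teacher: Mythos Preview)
Your argument is correct, and it takes a genuinely different route from the paper's. The paper lifts each curve $\gamma_{n_0,m}$ from the fixed base point $b_{n_0}$ \emph{independently}, obtaining endpoints $u_m\in\upe$ with $u_m\cdot b_{n_0}=b_m$; the price for this is that two such lifts $u_n,u_m$ may differ by an uncontrolled element of the isotropy group, so one only knows that $d_p(1,u_n)$ and $d_p(1,u_m)$ are small, not that $d_p(u_n,u_m)$ is. The paper then invokes the weak semi-parallelogram law (Theorem~\ref{semiparalelogramo}) to force $\{u_n\}$ to be Cauchy. You instead lift \emph{consecutively}: by translating each short curve back to the base point, lifting it isometrically from $1$, and left-multiplying by the previously constructed $u_n$, you arrange $d_p(u_n,u_{n+1})\le L(\gamma_n)<2^{-n+1}$ directly, so the sequence is Cauchy by a telescoping sum. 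This bypasses Theorem~\ref{semiparalelogramo} (and Clarkson's inequality behind it) entirely, yielding a more elementary proof that uses only Proposition~\ref{lift}, the $1$-Lipschitz property of $\pi$, the left invariance of $d_p$, and the completeness of $\upe$. The paper's approach, on the other hand, showcases the convexity machinery of Section~5, which is of independent geometric interest even if not strictly needed for completeness.
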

\begin{proof}
Let $\{b_n\}_{n\ge 1}$ be a Cauchy sequence in $\oo$, and fix $\pi/4\ge \varepsilon>0$.  Then there exists $n_0$ such that $d(b_n,b_m)<\varepsilon$ if $n,m\ge n_0$. Consider the (submersion) map 
$$
\pi=\pi_{b_{n_0}}: \upe \to \oo, \ \ \pi(u)=u\cdot b_{n_0}.
$$
For $n,m\ge n_0$, let $\gamma_{n,m}$ be a smooth curve in $\oo$ joining $b_{n}$ with $b_m$ at (respectively) $t=0$ and $t=1$, such that
$$
d(b_n,b_m)^p\le L(\gamma_{n,m})^p <d(b_n,b_m)^p+\varepsilon. 
$$
Then by Proposition \ref{lift} the curve $\gamma_{n_0,m}$ lifts, via $\pi$, to a curve $\mu_m$ of $\upe$ with $\mu_m(0)=1$, 
$$
\pi(\mu_{m}(t))=\gamma_{n_0,m}(t), \ \ t\in[0,1],
$$
such that $L_p(\mu_m)=L(\gamma_{n_0,m})$. 
Denote by $u_m=\mu_m(1)$. Then
$$
\varepsilon + d(b_{n_0},b_{m})^p>L(\gamma_{n_0,m})^p=L_p(\mu_m)^p\ge d_p(1,u_m)^p.
$$ 
For each $n,m\ge n_0$, let $v_n,z_{n,m}\in\bpe_{ah}$ be such that $\omega_{n,m}(t)=e^{v_n}e^{t z_{n,m}}$ is the unique minimal geodesic in $\upe$ which joins $u_n$ and $u_m$ at $t=0$ and $t=1$.  Then 
$$
d_p(1,\omega_{n,m}(t))<\pi/4
$$
if $\varepsilon$ is small enough. Hence by Theorem \ref{semiparalelogramo}, if $n,m\ge n_0$ then
\begin{eqnarray}
2\varepsilon &> & \varepsilon +\frac12 d(b_{n_0},b_n)^p+\frac12 d(b_{n_0},b_m)^p\ge \frac12 d_p(1,u_n)^p+\frac12 d_p(1,u_m)^p \nonumber\\
&  \ge &\frac{1}{g(\pi/4)2^{p-1}}(d_p(1,\omega_{n,m}(1/2))^p+d_p(u_n,u_m))^p\ge \frac{1}{g(\pi/4)2^{p-1}}d_p(u_n,u_m)^p\nonumber.
\end{eqnarray}
It follows that $\{u_n\}_{n\ge 1}$ is a Cauchy sequence in $\upe$, which is complete. Therefore the sequence $b_n=\pi(u_n)$ is convergent in $\oo$.
\end{proof}

\section{Submanifold  structure of $\oA$}

In this section we consider  the case $\oo=\oA=\{uAu^*:u\in\udos\}$, for a bounded self-adjoint operator $A$, and we study its local structure as a subset of ${\cal B}(\h)$. An elementary computation shows that all elements in $\oA$ are of the form $A+k$ with $k\in\be_h$. If $A$ itself lies in $\be$, then $\oA\subset \be_h$. Otherwise, $\oA\subset A+\be$, which can be regarded as an affine Hilbert space. In either case, a natural question is whether the manifold $\oA$ is a differentiable submanifold of the ambient Hilbert space. This is the purpose of this section. We show that the orbits $\oA$ are not, in general, differentiable submanifolds of $A+\be$. We show that $\oA\subset\be_h$ is a differentiable submanifold if and only if the spectrum of $A$ is finite. 

The obstruction for $\oA$ to be a submanifold is that its tangent spaces may not be closed in $\be$.  The tangent space of $\oA$ at $A$ (i.e. the derivatives at $A$ of smooth curves in $\be$, lying inside $\oA$) is apparently given by
$$
(T\oA)_A=\{xA-Ax: x\in\beah\}.
$$
D. Herrero, D. Voiculescu, C. Apostol and L. Fialkow, among others,  established several important results on the closedness of commutators (see the books  \cite{herrero2,herrero1} and the references therein for a complete  review on the subject). In particular, L. Fialkow \cite{fialkow} addressed the problem of the spectral characterization of Rosenblum's operators restricted to the Schatten ideals. Let us cite Fialkow's result:
Denote by $\tau_{AB}$ the operator $\tau_{AB}(x)=Ax-xB$. Let ${\cal J}$ be any Schatten ideal.
\begin{teo}\label{teofialkow} {\rm (Fialkow \cite{fialkow})}
The following are equivalent
\begin{enumerate}
\item
$\tau_{AB}:\B(\h)\to \B(\h)$ is bounded below.
\item
$\tau_{AB}:{\cal J}\to {\cal J}$ is bounded below for some ${\cal J}$.
\item
$\tau_{AB}:{\cal J}\to {\cal J}$ is bounded below for any ${\cal J}$.
\item
$\sigma_l(A)\cap \sigma_r(B)=\emptyset$.
\end{enumerate}
\end{teo}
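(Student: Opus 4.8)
The plan is to close the loop of equivalences through two substantial implications: that the failure of (4) makes $\tau_{AB}$ fail to be bounded below \emph{on $\B(\h)$ and on every Schatten ideal simultaneously}, and that (4) makes $\tau_{AB}$ bounded below \emph{on $\B(\h)$ and on every Schatten ideal with one common constant}. Since $(3)\Rightarrow(2)$ is trivial, these give all four equivalences, and they also locate the only real difficulty (see the last paragraph).

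\emph{Necessity of (4).} Suppose $\lambda\in\sigma_l(A)\cap\sigma_r(B)$. For Hilbert space operators $\sigma_l(\cdot)$ is the approximate point spectrum and $\sigma_r(B)=\overline{\sigma_{ap}(B^*)}$, so there are unit vectors $\xi_n,\eta_n$ with $\|(A-\lambda)\xi_n\|\to 0$ and $\|(B^*-\bar\lambda)\eta_n\|\to 0$. I would test $\tau_{AB}$ on the rank one operators $x_n=\xi_n\otimes\eta_n$ (defined by $x_n\zeta=\langle\zeta,\eta_n\rangle\xi_n$): these have norm $1$ in the operator norm and in every Schatten norm, while a direct computation gives
$$
\tau_{AB}(x_n)=\bigl((A-\lambda)\xi_n\bigr)\otimes\eta_n-\xi_n\otimes\bigl((B^*-\bar\lambda)\eta_n\bigr),
$$
so $\|\tau_{AB}(x_n)\|_{{\cal J}}\le \|(A-\lambda)\xi_n\|+\|(B^*-\bar\lambda)\eta_n\|\to 0$ in each of these norms. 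Hence $\tau_{AB}$ is not bounded below on $\B(\h)$ nor on any Schatten ideal ${\cal J}$, which is exactly $(1)\Rightarrow(4)$, $(2)\Rightarrow(4)$ and $(3)\Rightarrow(4)$.

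\emph{Sufficiency of (4).} Assume $\sigma_l(A)\cap\sigma_r(B)=\emptyset$ (both sets are compact). The strategy is to reduce to the classical Rosenblum theorem by enlarging $A$ and $B$. I would invoke the classical spectral filling results (in the spirit of \cite{herrero1,herrero2}): there is a Hilbert space ${\cal K}\supseteq\h$ and $\hat A\in\B({\cal K})$ with $\h$ invariant, $\hat A|_{\h}=A$ and $\sigma(\hat A)=\sigma_l(A)$; applying the same to $B^*$ and taking adjoints gives ${\cal L}\supseteq\h$ and $\check B\in\B({\cal L})$ with $\h^{\perp}$ invariant, compression $P_\h\check B|_{\h}=B$ and $\sigma(\check B)=\sigma_r(B)$. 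Writing $J\colon\h\hookrightarrow{\cal K}$ for the inclusion (an isometry) and $P\colon{\cal L}\to\h$ for the orthogonal projection (a coisometry), these hypotheses translate into $\hat A J=JA$ and $P\check B=BP$. Since $\sigma(\hat A)\cap\sigma(\check B)=\sigma_l(A)\cap\sigma_r(B)=\emptyset$, Rosenblum's theorem makes $\tau(Y)=\hat A Y-Y\check B$ invertible on $\B({\cal L},{\cal K})$, with $\tau^{-1}(C)=\frac1{2\pi i}\int_\Gamma(\hat A-z)^{-1}C(z-\check B)^{-1}\,dz$ for a contour $\Gamma$ enclosing $\sigma_l(A)$ and missing $\sigma_r(B)$. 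The key point I would stress is that this formula yields $\|\tau^{-1}(C)\|_{{\cal J}}\le M\|C\|_{{\cal J}}$ with $M=\frac{|\Gamma|}{2\pi}\sup_{\Gamma}\|(\hat A-z)^{-1}\|\,\sup_{\Gamma}\|(z-\check B)^{-1}\|$, a constant \emph{independent of the ideal} (and valid for $\B({\cal K})$), because $\|(\hat A-z)^{-1}C(z-\check B)^{-1}\|_{{\cal J}}\le\|(\hat A-z)^{-1}\|\,\|C\|_{{\cal J}}\,\|(z-\check B)^{-1}\|$. Then, given $x$ in any Schatten ideal of $\h$ (or in $\B(\h)$), I would put $Y=JxP$: one has $\|Y\|_{{\cal J}}=\|x\|_{{\cal J}}$, and $\tau(Y)=\hat A JxP-JxP\check B=J(Ax-xB)P=J\,\tau_{AB}(x)\,P$, whence $\|\tau_{AB}(x)\|_{{\cal J}}=\|\tau(Y)\|_{{\cal J}}\ge M^{-1}\|x\|_{{\cal J}}$. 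This proves $(4)\Rightarrow(1)$ and $(4)\Rightarrow(3)$ at once.

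The genuinely delicate ingredient, and the one I expect to be the main obstacle, is the spectral filling step: constructing an extension of $A$ with invariant $\h$ whose spectrum collapses exactly onto $\sigma_l(A)=\sigma_{ap}(A)$ (equivalently, filling the bounded components of $\mathbb C\setminus\sigma_{ap}(A)$ that lie inside $\sigma(A)$), and dually for $B^*$. Once these extensions are available, the rest is routine bookkeeping with the Rosenblum integral formula, and the uniformity over all Schatten ideals comes for free from the norm estimate above.
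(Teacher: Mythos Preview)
The paper does not prove this theorem: it is quoted as Fialkow's result and attributed to \cite{fialkow} without argument, so there is no proof in the paper to compare your proposal against.

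That said, your outline is sound. The necessity direction via rank-one test operators $\xi_n\otimes\eta_n$ is the standard (and clean) argument, and it correctly exploits the fact that for rank-one operators the operator norm and all Schatten norms coincide, giving $\neg(4)\Rightarrow\neg(1),\neg(2),\neg(3)$ simultaneously. For the sufficiency direction your reduction to Rosenblum's theorem via spectral filling is a legitimate route: the intertwining identities $\hat A J=JA$ and $P\check B=BP$ do yield $\tau(JxP)=J\,\tau_{AB}(x)\,P$, the singular values of $JxP$ agree with those of $x$ (since $J$ is an isometry and $P$ a coisometry), and the contour-integral expression for $\tau^{-1}$ immediately gives an ideal-independent bound $M$. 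You are right to flag the spectral filling step (producing an invariant extension $\hat A$ of $A$ with $\sigma(\hat A)=\sigma_{ap}(A)$, and the dual construction for $B$) as the only substantial ingredient; this is a known result of the Apostol--Herrero circle and can indeed be found in the references \cite{herrero1,herrero2} you invoke, though it is not elementary. Fialkow's original argument in \cite{fialkow} proceeds somewhat differently, but your approach is a valid alternative.
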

Here $\sigma_l(A)$ (resp. $\sigma_r(B)$) denotes the left (resp. right) spectrum of $A$ (resp. $B$).

Recall the map
$$
\pi_A:\udos\to \oA , \ \ \pi_A(u)=uAu^*
$$
and its differential at the identity
$$
\delta_A: \beah \to (T\oA)_A, \ \ \delta_A(x)=xA-Ax.
$$
The Banach-Lie algebra $\beah$ can be decomposed
$$
\beah={\cal G}\oplus {\cal F},
$$
for $\g=\{x\in\beah: xA=Ax\}$ and $\f=\g^\perp$.
Let  
$$
P_A:\beah\to {\cal F}\subset \beah
$$ 
be the orthogonal projection. 
Note that since $\ker \delta_A={\cal F}$, then
$$
\delta_A|_{{\cal F}}:{\cal F} \to (T\oA)_A
$$
is a linear bijection. If $(T\oA)_A\subset \be_h$ were closed, then $\delta_A|_{{\cal F}}$ would be an isomorphism between Banach spaces, and therefore there would exist a constant $C_A$ such that
\begin{equation}\label{acotaciondelta}
\|xA-Ax\|_2\ge C_A \|x-P_A(x)\|_2.
\end{equation}

\begin{teo}\label{partecontinuapartediscreta}
Let $A\in\B(\h)$ self-adjoint. Then $(T\oA)_A\subset \be_h$ is closed if and only if the spectrum of $A$ is finite.
\end{teo}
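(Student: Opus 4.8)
The plan is to reformulate the statement as a closed-range question for one bounded self-adjoint operator and then to apply the spectral characterization of operators with closed range. Consider $\ad A:\be\to\be$, $\ad A(x)=Ax-xA$; since $A=A^*$, a direct computation with the trace form gives $(\ad A)^*=\ad A$. Now $\delta_A(x)=xA-Ax=-\ad A(x)$, so $(T\oA)_A=\delta_A(\beah)=\ad A(\beah)$, and since $\ad A$ sends $\beah$ into $\beh$ and $\beh$ into $\beah$, with $\ad A|_{\beah}$ corresponding to $\ad A|_{\beh}$ under $x\mapsto ix$, the subspaces $\ad A(\be)$, $\ad A(\beh)$, $\ad A(\beah)$ are simultaneously closed or not. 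Hence it suffices to show that $\ad A$ has closed range in $\be$ if and only if $\sigma(A)$ is finite.

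For the spectral input, recall the standard fact: a bounded self-adjoint operator $T$ on a Hilbert space has closed range if and only if $0$ is not an accumulation point of $\sigma(T)$ (off $\ker T$ it is then bounded below). So I must locate $\sigma(\ad A)$ near $0$. Writing $\ad A=L_A-R_A$ with $L_A,R_A$ the commuting self-adjoint operators of left and right multiplication by $A$ on $\be$, both with spectrum $\sigma(A)$, one gets $\sigma(\ad A)\subseteq\sigma(A)-\sigma(A)$. For the reverse inclusion $\sigma(A)-\sigma(A)\subseteq\sigma(\ad A)$ (the only part I actually need), take $\lambda,\mu\in\sigma(A)$ and unit approximate eigenvectors $\xi_k,\eta_k$, i.e. $\|(A-\lambda)\xi_k\|\to0$ and $\|(A-\mu)\eta_k\|\to0$; the rank-one operators $z_k=\langle\,\cdot\,,\eta_k\rangle\xi_k$ satisfy $\|z_k\|_2=1$ and $\|\ad A(z_k)-(\lambda-\mu)z_k\|_2\le\|(A-\lambda)\xi_k\|+\|(A-\mu)\eta_k\|\to0$, so $\lambda-\mu\in\sigma(\ad A)$.

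With this in hand the two implications follow quickly. If $\sigma(A)=\{\lambda_1,\dots,\lambda_n\}$ is finite, then $\sigma(\ad A)\subseteq\sigma(A)-\sigma(A)$ is finite, $0$ is isolated in it, and $\ad A$ has closed range; concretely, with $e_j$ the spectral projections of $A$ and $\rho=\min_{i\ne j}|\lambda_i-\lambda_j|>0$, the block decomposition $x=\sum_{i,j}e_ixe_j$ gives $\|xA-Ax\|_2^2=\sum_{i,j}(\lambda_i-\lambda_j)^2\|e_ixe_j\|_2^2\ge\rho^2\|x-P_A(x)\|_2^2$, which is exactly (\ref{acotaciondelta}) and exhibits $\delta_A|_{\f}$ as bounded below, hence of closed range. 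Conversely, if $\sigma(A)$ is infinite then, being an infinite compact subset of $\mathbb R$, it has an accumulation point $\lambda_0\in\sigma(A)$; choosing $\lambda_n\in\sigma(A)$ with $\lambda_n\ne\lambda_0$ and $\lambda_n\to\lambda_0$, the points $\lambda_n-\lambda_0\in\sigma(\ad A)$ are nonzero and tend to $0$, so $\ad A$ fails to have closed range. Equivalently, one can display the witnessing sequence directly: pick $\xi_n,\eta$ supported in disjoint (thin) spectral intervals of $A$ around $\lambda_n$ and $\lambda_0$, and skew-symmetrize, $x_n=\langle\,\cdot\,,\eta\rangle\xi_n-\langle\,\cdot\,,\xi_n\rangle\eta\in\f$, obtaining vectors with $\|x_n\|_2$ bounded below but $\|\delta_A(x_n)\|_2\to0$, which violates (\ref{acotaciondelta}) for every $C_A$. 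The main obstacle is precisely this construction in the stated generality: when $\lambda_0$ or $\lambda_n$ lies in the continuous spectrum there are no genuine eigenvectors, so one works with vectors from thin spectral subspaces and must control the error terms $(A-\lambda)\xi$, and must check $x_n\perp\g$ — which holds because $x_n$ is built from mutually orthogonal spectral subspaces of $A$. Everything else is the routine block computation and the standard closed-range criterion. (Fialkow's Theorem \ref{teofialkow} does not apply verbatim here, since $\tau_{AA}$ always has nontrivial kernel; what is relevant is its refinement to the restriction $\delta_A|_{\f}$, which is exactly what the estimate (\ref{acotaciondelta}) encodes.)
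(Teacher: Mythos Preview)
Your argument is correct and takes a genuinely different route from the paper. You recast the problem as a closed-range question for the self-adjoint operator $\ad A$ on the Hilbert space $\be$, compute $\sigma(\ad A)\supseteq\sigma(A)-\sigma(A)$ via rank-one approximate eigenvectors, and then invoke the standard criterion that a self-adjoint operator has closed range iff $0$ is not an accumulation point of its spectrum. This handles both directions uniformly: finite $\sigma(A)$ forces $\sigma(\ad A)$ finite, while an infinite compact $\sigma(A)\subset\mathbb R$ has an accumulation point $\lambda_0$, giving $\lambda_n-\lambda_0\to 0$ in $\sigma(\ad A)\setminus\{0\}$. The paper, by contrast, first splits $\h=\h_{pp}\oplus\h_c$ and argues separately: on the continuous part it observes that $\delta_{A_c}$ is injective (no nonzero compact commutes with an operator without eigenvalues) and then invokes Fialkow's theorem to reach a contradiction; on the pure point part it builds an explicit sequence $x_n$ (the $n\times n$ block with all entries $i/n$) to defeat the estimate (\ref{acotaciondelta}). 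Your approach is shorter and more conceptual, avoids the pure-point/continuous dichotomy, and sidesteps Fialkow entirely; the paper's approach has the virtue of producing a completely explicit witnessing sequence in the eigenvalue case. One small caveat on your supplementary ``equivalently'' construction: with $\eta$ \emph{fixed} near $\lambda_0$ and $\lambda_n\to\lambda_0$, the spectral supports of $\xi_n$ and $\eta$ will eventually overlap, so to keep $x_n\in\f$ you should let $\eta=\eta_n$ vary as well (supported in an interval around $\lambda_0$ disjoint from the one around $\lambda_n$); this does not affect your main spectral argument, which is already complete without it.
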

\begin{proof}
First note that if $(T\oA)_A\subset \be_h$ is closed, then $\{xA-Ax: x\in \be\}\subset \be$ is also closed. Indeed, since $A$ is self-adjoint,  the derivation $\delta_A$, which is clearly defined on $\be$, maps $\beah$ into $\be_h$, and $\be_h$ into $\beah$. Therefore if $\delta_A(x_n)\to y$ in $\be$, and one decomposes $x_n=x_n^{ah}+x_n^h$ in its hermitian and skew-hermitian parts, then both $\delta_A(x_n^{ah})\in\be_h$ and $\delta_A(x_n^h)\in\beah$ are convergent. The hypothesis that $\delta_A(\beah)=(T\oA)_A$ is closed clearly implies that also $\delta_A(\be_h)$ is closed, and our claim follows.
We may decompose $\h=\h_{pp}\oplus\h_c$ in two orthogonal subspaces which reduce $A$, such that $A_{pp}=A|_{\h_{pp}}:\h_{pp}\to \h_{pp}$ has a dense subset of eigenvalues in its spectrum, and $A_c=A|_{\h_c}:\h_c\to \h_c$ has no eigenvalues. Since this decomposition reduces $A$, then clearly $\delta_{A_{pp}}({\cal B}_2(\h_{pp}))$ and $\delta_{A_c}({\cal B}_2(\h_c))$ are closed, by a similar argument as above. 

Let us reason first with  $A_c$, usually called the continuous spectrum part. Note that $\delta_{A_c}$ is injective. If $x\in{\cal B}_2(\h_c)$ satisfies $\delta_{A_c}(x)=0$ then $x$ commutes with $A_c$, and thus the real and imaginary parts of $x$ commute with $A_c$. This means that there is a non zero compact self-adjoint  operator $y$ which commutes with $A_c$. This is clearly not possible: let $p$ be any spectral (finite rank) projection of $y$, then $p$ commutes with $A_c$, and therefore $pA_cp$, being a finite rank self-adjoint operator, would have an eigenvalue, and therefore $A_c$ would have an eigenvalue. It follows that $\delta_{A_C}$ is bounded below, and therefore by Fialkow's theorem \ref{teofialkow}, $\sigma_l(A_c)\cap \sigma_r(A_c)=\emptyset$, which is impossible because for self-adjoint operators, the left and right spectra coincide. 
 
Thus $\h_c=\{0\}$, and the spectrum of $A=A_{pp}+0_{\h_c}$ has a dense subset of eigenvalues. Suppose that there are infinitely many different non zero eigenvalues $\{\lambda_n: n\ge 1\}$, ordered such that $|\lambda_1|\ge |\lambda_2| \ge \dots $.
Let $\{e_n:n\ge 1\}$ be an orthonormal set in $\h$, consisting of the corresponding eigenvectors of $A$. These vectors span a subspace $\h_0$ which reduces $A$. If we denote by $A_0=A|_{\h_0}$, it is clear again that $\delta_{A_0}({\cal B}(\h_0))$ is closed. Thus we may suppose $\h=\h_0$.  We shall write operators in $\h$ as matrices with respect to this basis. 
Let us show  that $\delta_A(\h)$ is not closed. With these reductions, it is clear that ${\cal F}$ consists of diagonal matrices. Therefore  $P_A(x)$ consists of leaving the main diagonal of $x$ fixed, and replacing all non diagonal entries of $x$ with zeros. For each $n\ge 1$, consider the $n\times n$ matrix $b_n$ with $1/n$ in all entries, and $x_n$ the operator on $\h$ with matrix $ib_n$ in the main $n\times n$ corner block, and zeros elsewhere. Note that $x_n$ is  $i$ times a rank one projection, and therefore its $2$-norm is $1$. Also note that $\|P_A(x_n)\|_2=1/\sqrt{n}\to 0$. Therefore 
$$
\|x_n-P_A(x_n)\|_2\to 0.
$$
A straightforward matrix computation shows that $x_nA-Ax_n$ is zero but on the main $n\times n$ corner block, where it has the matrix with  $\frac{1}{n}(a_j-a_i)$ at the $i,j$-entry. Therefore
$$
\|x_nA-Ax_n\|_2^2 =\frac{1}{n^2}\sum_{i,j=1}^n a_j^2+a_i^2-2a_ja_i=\frac{2}{n^2}\{n\sum_{k=1}^na_k^2-\sum_{k=1}^na_k^2\}\le \frac{2}{n}\sum_{k=1}^na_k^2\le \frac{2}{n}\|A\|_2^2,
$$
and thus $\|x_nA-Ax_n\|_2\to 0$. It follows that $(T\oA)_A$ is not closed.

If the spectrum of $A$ is finite, then $A=\sum_{i=1}^n \lambda_i p_i$, for pairwise orthogonal self-adjoint projections $p_i$ which sum $1$. One can write operators in $\be$ as $n\times n$ matrices in terms of the decomposition $\h=\sum_{i=1}^n R(p_i)$. A straightforward computation shows that if $x\in\beah$ with matrix $(x_{i,j})$, then $\delta_A(x)$ is, in matrix form
$$
\delta_A(x)= \left( \begin{array}{lllll}
0 & (\lambda_2-\lambda_1)x_{1,2} & (\lambda_3-\lambda_1)x_{1,3} & \dots & (\lambda_n-\lambda_1)x_{1,n} \\
(\lambda_1-\lambda_2)x_{2,1} & 0 & (\lambda_3-\lambda_2)x_{2,3} & \dots & (\lambda_n-\lambda_2)x_{2,n} \\
\dots & \dots & \dots & \dots & \dots \\
(\lambda_1-\lambda_n)x_{n,1} & (\lambda_2-\lambda_n)x_{n,2} & (\lambda_2-\lambda_1)x_{1,2} & \dots & 0
\end{array} \right).
$$
Since $\lambda_i\ne\lambda_j$ if $i\ne j$, it follows that $\{xA-Ax: x\in \beah\}$ consists of operators in $\be_h$ whose $n\times n$ matrices have zeros on the diagonal, i.e.
$$
\{xA-Ax: x\in \beah\}=\{z\in\be_h: p_i zp_i=0 , \ i=1,\dots , n\},
$$
which is clearly closed in $\be_h$.
\end{proof}
\begin{rem}
If the spectrum of $A$ is finite, the optimal  constant $C_A$ can be computed. If $A=\sum_{i=1}^n p_i$ as above, the set $\{x\in \be: xA=Ax\}$ consists of   block diagonal  matrices. Thus
$$
P_A(x)=\sum_{i=1}^n p_i x p_i .
$$
Using the matrix form of $\delta_A(x)$, 
$$
\|\delta_A(x)\|_2^2=\sum_{i\ne j}\|(\lambda_j-\lambda_i)x_{i,j}\|_2\ge \inf_{i\ne j}|\lambda_j-\lambda_i|^2\sum_{i\ne j}\|x_{i,j}\|_2^2=\inf_{i\ne j}|\lambda_j-\lambda_i|^2 \|x-P_A(x)\|_2^2.
$$
Thus $C_A=\inf_{i\ne j}|\lambda_j-\lambda_i|^2$.
\end{rem}

The finite spectrum situation contains interesting cases. For instance, if $A=P$ is a projection with infinite rank and co-rank, the orbit $\oo$ equals the connected component of the restricted Hilbert-Schmidt Grassmannian  corresponding to the polarization $\h=R(P)\oplus R(P)^\perp$ (see \cite{belrattum,segalwilson}) with virtual dimension $0$ (i.e. the component containing $P$). From the above proposition it is clear that the finite spectrum condition is necessary for $\oA$ to be a submanifold of $A+\beah$ (or a differentiable manifold with the $2$-norm topology). In the rest of this section we shall prove that it is also sufficient. 

To establish the equivalence between the existence of the submanifold structure for $\oA\subset A+\beah$ and the finite spectrum condition, the following general result on homogeneous spaces is useful. A proof can be found in \cite{rae}.
\begin{lem}\label{lemaraeburn}
Let $G$ be a Banach-Lie group acting smoothly on a Banach space $X$. For a fixed
$x\in X$, denote by $\pi_{x}:G\to X$ the smooth map $\pi_{x}(g)=g\cdot
x$. Suppose that
\begin{enumerate}
\item
$\pi_{x}$ is an open mapping, when regarded as a map from $G$ onto the orbit
$\{g\cdot x: g\in G\}$ of $x$ (with the relative topology of $X$).
\item
The differential $d(\pi_{x})_1:(TG)_1\to X$ splits: its kernel and range are
closed complemented subspaces.
\end{enumerate}
Then the orbit $\{g\cdot x: g\in G\}$ is a smooth submanifold of  $X$, and the
map
$\pi_{x}:G\to \{g\cdot x: g\in G\}$ is a smooth submersion.
\end{lem}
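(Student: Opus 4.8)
The plan is to build a chart for the orbit $\mathcal O:=\{g\cdot x:g\in G\}$ near $x$ out of a slice of $G$ transverse to the isotropy group, and then to invoke the openness hypothesis (1) to see that this slice fills out a whole neighbourhood of $x$ in $\mathcal O$, so that $\mathcal O$ is locally exactly that slice. First I would fix the two splittings provided by (2): write $(TG)_1=K\oplus S$ with $K=\ker d(\pi_x)_1$ and $S$ a closed complement, and $X=R\oplus C$ with $R=\mathrm{ran}\,d(\pi_x)_1$ and $C$ a closed complement; let $P_R:X\to R$ and $P_C:X\to C$ be the associated projections. Then $T:=d(\pi_x)_1|_S:S\to R$ is a continuous linear bijection, hence a topological isomorphism by the open mapping theorem. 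Let $\exp$ denote any local chart of $G$ at $1$ with $d\exp_0=\mathrm{id}$, and set $F(s):=\pi_x(\exp s)=\exp(s)\cdot x$ for $s$ in a small ball $U_S\subseteq S$ about $0$. Then $F(0)=x$ and $dF_0=T$ regarded as a map into $X$, which is injective with closed complemented range $R$; by the immersion form of the inverse function theorem in Banach spaces, after shrinking $U_S$ the map $F$ is a smooth embedding, $N:=F(U_S)$ is a smooth submanifold of $X$ with $x\in N$ and $T_xN=R$, and there is a chart $(\mathcal W,\phi)$ of $X$ at $x$ under which $N\cap\mathcal W$ corresponds to $U_S\times\{0\}$ and $\mathcal W$ to an open subset of $U_S\times C$. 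Since $F=\pi_x\circ(\exp|_{U_S})$ we also have $N\subseteq\mathcal O$; moreover, because $s\mapsto P_R(F(s))$ has differential $T$ at $0$, the submanifold $N$ is a graph over $R$ near $x$ and $P_R|_N$ is a chart for $N$.

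Next I would identify $\mathcal O$ with $N$ near $x$, which is where (1) enters. The crucial point is that the isotropy subgroup $H:=\{g\in G:g\cdot x=x\}$ is a split Banach--Lie subgroup of $G$ with $\mathrm{Lie}(H)=K$. Granting this, $(s,h)\mapsto\exp(s)h$ is a local diffeomorphism of $S\times H$ onto a neighbourhood of $1$ in $G$, so $\mathcal U:=\exp(U_S)\cdot H$ is open in $G$; since $H$ fixes $x$ we have $\pi_x(\exp(s)h)=\exp(s)\cdot x$, hence $\pi_x(\mathcal U)=\pi_x(\exp U_S)=N$, and by (1) this set is open in $\mathcal O$. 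Therefore $N=\mathcal O\cap\mathcal V$ for some open $\mathcal V\subseteq X$, and intersecting $\mathcal W$ with $\mathcal V$ the chart $\phi$ exhibits $\mathcal O\cap\mathcal W=N\cap\mathcal W$ as a submanifold of $X$. Every point of $\mathcal O$ equals $g\cdot x$ for some $g$, and the action diffeomorphism $\ell_g:X\to X$ carries $x$ to $g\cdot x$ and $\mathcal O$ onto $\mathcal O$, so $\mathcal O$ is a submanifold of $X$ near each of its points. Finally $\pi_x:G\to\mathcal O$ is a submersion: near $1$, composing with the chart $P_R|_N$ of $N$ turns $\pi_x$ into $g\mapsto P_R(g\cdot x)$, whose differential at $1$ is $T\circ\mathrm{pr}_S$, surjective onto $R$ with split kernel $K$; and the relation $\pi_x\circ L_g=\ell_g\circ\pi_x$, with $L_g$ and $\ell_g$ diffeomorphisms, propagates this to every point of $G$.

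The main obstacle is exactly the claim that $H$ is a split Banach--Lie subgroup with $\mathrm{Lie}(H)=K$; this is the real content of the lemma, and it is here that openness cannot be dropped — it excludes the familiar pathology of an orbit winding densely in $X$, for which the differential conditions in (2) hold yet the orbit is not a submanifold. The inclusion $\mathrm{Lie}(H)\subseteq K$ is automatic, and equality follows from uniqueness of integral curves: for $v\in K$, both $t\mapsto\exp(tv)\cdot x$ and the constant curve at $x$ are integral curves through $x$ of the fundamental vector field of $v$, a $C^1$ vector field on $X$. To upgrade this to a submanifold statement one notes that $q:=P_R\circ\pi_x:G\to R$ is a genuine submersion near $1$ (since $dq_1=T\circ\mathrm{pr}_S$ is onto with split kernel $K$), so $M:=q^{-1}(0)$ is a Banach submanifold of $G$ near $1$ with $T_1M=K$ and $H\subseteq M$ locally; one must then show that the smooth map $P_C\circ\pi_x|_M:M\to C$, which vanishes to first order at $1$, in fact vanishes on a neighbourhood of $1$ in $M$, and this is the step forced by the openness of $\pi_x$. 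Carrying out this scheme carefully, as in \cite{rae}, completes the proof.
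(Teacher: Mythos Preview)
The paper does not give its own proof of this lemma; it simply states ``A proof can be found in \cite{rae}.'' So there is nothing in the paper to compare your argument against line by line. Your overall scheme --- build the transversal slice $N=F(U_S)$ as an embedded submanifold via the splitting in (2), then use openness (1) to see that $N$ is a full relative neighbourhood of $x$ in $\mathcal O$, and transport by $\ell_g$ --- is the standard one and is correct.

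You have, however, made the ``main obstacle'' harder than it is. You do \emph{not} need to show that $H$ is a split Banach--Lie subgroup, and the detour through $M=q^{-1}(0)$ and the unproved claim that $P_C\circ\pi_x|_M$ vanishes near $1$ is unnecessary. Your own integral-curve argument already gives everything required: for $v\in K$ you show $\exp(tv)\cdot x=x$, hence $\exp(U_K)\subseteq H$. Now simply take $\Phi:U_S\times U_K\to G$, $\Phi(s,k)=\exp(s)\exp(k)$; its differential at $(0,0)$ is the identity of $S\oplus K=(TG)_1$, so $W:=\Phi(U_S\times U_K)$ is an open neighbourhood of $1$ in $G$. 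Since $\exp(k)\in H$ for $k\in U_K$,
\[
\pi_x\big(\Phi(s,k)\big)=\exp(s)\cdot\big(\exp(k)\cdot x\big)=\exp(s)\cdot x=F(s),
\]
so $\pi_x(W)=F(U_S)=N$. Hypothesis (1) then gives directly that $N$ is open in $\mathcal O$, and your chart $(\mathcal W,\phi)$ finishes the submanifold statement exactly as you wrote. With this simplification your proof is complete and self-contained; the appeal to \cite{rae} in your final sentence can be dropped.
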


\begin{teo}
$\oA\subset A+\beah$ is a differentiable submanifold if and only if the spectrum of $A$ is finite.
\end{teo}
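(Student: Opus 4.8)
The plan is to combine Theorem \ref{partecontinuapartediscreta} (closedness of $(T\oA)_A$ is equivalent to finiteness of $\sigma(A)$) with the abstract submanifold criterion of Lemma \ref{lemaraeburn}, applied to $\pi_A:\udos\to\oA$. The direction ``$\oA$ submanifold $\Rightarrow$ $\sigma(A)$ finite'' is the soft one; the converse is where the work lies, since one must verify the two hypotheses of Lemma \ref{lemaraeburn}.

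For the forward direction I would argue as follows. Assume $\oA$ is a differentiable submanifold of the Hilbert space $A+\beah$. The action map $\udos\times(A+\beah)\to A+\beah$, $(u,\xi)\mapsto u\xi u^*$, is smooth and carries $\udos\times\oA$ into $\oA$, so, $\oA$ being embedded, $\pi_A$ is smooth as a map into $\oA$; since the action is transitive and the isotropy $\ga$ is a Banach--Lie subgroup, the orbit map is a submersion and $T_A\oA$ equals the image $(T\oA)_A=\{xA-Ax:x\in\beah\}$ of $d(\pi_A)_1$. A tangent space of a submanifold of a Banach space is closed, so $(T\oA)_A$ is closed in $\beah$, and Theorem \ref{partecontinuapartediscreta} forces $\sigma(A)$ to be finite. (Equivalently, $\delta_A|_{\f}:\f\to(T\oA)_A$ would then be a bounded bijection onto a closed subspace, hence an isomorphism, yielding inequality (\ref{acotaciondelta}), which fails when $\sigma(A)$ is infinite.)

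For the converse, suppose $\sigma(A)=\{\lambda_1,\dots,\lambda_n\}$ is finite, with spectral projections $p_1,\dots,p_n$, and I would check the hypotheses of Lemma \ref{lemaraeburn}. \emph{Splitting of $d(\pi_A)_1=\delta_A$:} the kernel $\g=\{x\in\beah:xA=Ax\}$ is closed (kernel of a bounded operator) and orthocomplemented by $\f=\g^\perp$; the range is $(T\oA)_A$, which is closed by Theorem \ref{partecontinuapartediscreta} and, by the explicit description in its proof, equals $\{z\in\be_h:p_izp_i=0,\ i=1,\dots,n\}$, a subspace complemented by the block-diagonal operators, with associated projection $z\mapsto\sum_i p_izp_i$. \emph{Openness of $\pi_A$ onto $\oA$:} I would produce a continuous local cross-section of $\pi_A$ near $A$. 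Given $B=uAu^*\in\oA$ close to $A$ in the $2$-norm, each $\lambda_i$ is isolated in $\sigma(B)=\sigma(A)$, so the Riesz projection $q_i(B)=\frac{1}{2\pi i}\oint_{\Gamma_i}(z-B)^{-1}\,dz$ is defined, $q_i(B)-p_i\in\be$ depends continuously on $B-A$ in the $2$-norm, and $q_i(B)\to p_i$. Set $S(B)=\sum_i q_i(B)p_i$: then $S(A)=1$, $S(B)-1\in\be$, $S(B)$ is invertible for $B$ near $A$, and $S(B)p_i=q_i(B)S(B)$. Let $v(B)$ be the unitary part of $S(B)$ in the polar decomposition; one checks $v(B)\in\udos$, $v(A)=1$, $v(B)$ depends continuously on $B$, $v(B)p_iv(B)^*=q_i(B)$, and hence $v(B)^*Bv(B)=\sum_i\lambda_ip_i=A$, i.e. $\pi_A(v(B))=B$. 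Thus $v$ is a continuous section on a relative neighbourhood of $A$ in $\oA$ with $v(A)=1$, which forces $\pi_A$ to be open at $1$; by $\udos$-homogeneity (via $\pi_A\circ L_u=C_u\circ\pi_A$, where $L_u$ is left translation on $\udos$ and $C_u(\xi)=u\xi u^*$) it is then open everywhere. Lemma \ref{lemaraeburn} now gives that $\oA$ is a smooth submanifold of $A+\beah$.

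The main obstacle is the openness hypothesis (condition 1 of Lemma \ref{lemaraeburn}) in the converse: constructing the cross-section and, above all, checking that it is continuous for the $2$-norm topology on $\oA$ — this is precisely where finiteness of $\sigma(A)$ is used, since only then are the eigenvalues isolated (so Riesz projections exist and $q_i(B)-p_i$ stays in $\be$) — together with verifying that $v(B)$ genuinely lands in $\udos$, i.e. that $v(B)-1\in\be$ and $v(B)$ is unitary. The forward direction is comparatively routine once one accepts the standard identification of the tangent space of the orbit with $(T\oA)_A$.
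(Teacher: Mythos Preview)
Your proposal is correct and follows the same overall architecture as the paper: invoke Theorem~\ref{partecontinuapartediscreta} for the forward direction, and verify the two hypotheses of Lemma~\ref{lemaraeburn} for the converse. The splitting condition is handled identically. The genuine difference lies in how you build the local cross-section for openness. The paper, given $b=uAu^*$ close to $A$, uses inequality~(\ref{acotaciondelta}) to show that $P_A(u^*)$ is invertible in $Gl_2(\h)$ and defines $\sigma(b)=u\,\Omega(P_A(u^*))$ (with $\Omega$ the unitary part of the polar decomposition), then checks independence of the choice of $u$ via the modular property $P_A(yxz)=yP_A(x)z$ for $y,z$ commuting with $A$. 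Your construction is intrinsic: you compute the Riesz projections $q_i(B)$ directly from $B$, form $S(B)=\sum_i q_i(B)p_i$, and take its unitary part. This avoids the well-definedness check, at the cost of having to verify that $B\mapsto q_i(B)$ is continuous in the $2$-norm; that follows from the resolvent identity $q_i(B)-q_i(B')=\frac{1}{2\pi i}\oint_{\Gamma_i}(z-B)^{-1}(B-B')(z-B')^{-1}\,dz$ together with the fact that $\|(z-B)^{-1}\|=\|(z-A)^{-1}\|$ for all $B\in\oA$ (unitary equivalence), so the resolvent bounds are uniform over the orbit. Both constructions land in $\udos$ by the same reason: a polar decomposition of an element of $Gl_2(\h)$ has unitary part in $\udos$. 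Your route is the classical ``direct rotation'' between spectral decompositions; the paper's route leans more heavily on the specific constant $C_A$ and the conditional-expectation-like projection $P_A$.
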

\begin{proof}
The necessary part is clear. Suppose that the spectrum of $A$ is finite, $A=\sum_{i=1}^n \lambda_i p_i$. We shall use Lemma \ref{lemaraeburn} above. Note that in our case $G=\udos$, $d(\pi_{x})_1=\delta_A$. Its kernel is complemented, its range is complemented by the previous theorem. Therefore it remains to prove that $\pi_A:\udos\to \oA$ is open, or equivalently, that it has a local continuous cross section defined on a neighborhood of $A\in\oA$. Since the range of $\delta_A$ is closed, there exists a constant $C_A$ as in (\ref{acotaciondelta}): $\|xA-Ax\|_2\ge C_A\|x-P_A(x)\|_A$, for $x\in\beah$. Note that $P_A$ can be extended to a $\| \ \|$-contractive idempotent map, which we shall still call $P_A$,
$$
P_A:\B(\h)\to \{x\in \B(\h): xA=Ax\} \subset \B(\h), \ \  P_A(x)=\sum_{i=1}^n p_i x p_i.
$$
Clearly, $P_A|_{\be}$  is the $Tr$-orthogonal projection onto the closed subspace $\{x\in \be: xA=Ax\}$. Also it is clear that the inequality (\ref{acotaciondelta}) is still valid for $x\in\be$. Moreover,  $P_A$  has the following modular property: if $y,z\in \{x\in \B(\h): xA=Ax\}$, then $P_A(yxz)=yP_A(x)z$. Consider the open ball $\B=\{b\in \oA: \|b-A\|_2<C_A\}$. We define the following map in ${\cal B}$:
$$
\sigma: {\cal B}\to \udos , \ \ \sigma(b)=u \Omega(P_A(u^*)), \hbox{ if } b=uAu^*,
$$
where $\Omega$ is the unitary part in the polar decomposition of an invertible operator in $Gl_2(\h)$ ($g=\Omega(g)|g|$). Several facts involving the well definition of $\sigma$ need to be checked. First note that $P_A(u)$ lies in $Gl_2(\h)$: since $b=uAu^*\in{\cal B}$, one has that 
$$
C_A\|u-P_A(u)\|_2\le \|uA-Au\|_2=\|uAu^*-A\|_2<C_A,
$$
i.e. $\|1-P_A(u)u^*\|=\|u-P_A(u)\|\le \|u-P_A(u)\|_2<1$, and thus $P_A(u)$ is invertible. Moreover, $P_A(u)-1=P_A(u-1)\in\be$, and therefore $P_A(u)\in GL_2(\h)$, and thus $\Omega(E(u)^*)\in \udos$. Next note that it does not depend on the unitary $u$ performing $b=uAu^*$: if also $b=u'Au'^*$, then $u'=uv$ for $vA=Av$, and thus 
$$
u'\Omega(P_A(u'^*))=uv \Omega(P_A(v^*u^*))=uv \Omega(v^*P_A(u^*))=u\Omega(P_A(u^*)).
$$
Let us prove that $\sigma$ is continuous. It suffices to show that it is continuous at $A$. Suppose that $u_nAu_n^*\to A$. Then  as above, $\|u_nA-Au_n\|_2\to 0$, and therefore $\|u_n-P_A(u_n)\|_2 \to 0$, or equivalently,
$$
\|1-u_n P_A(u_n^*)\|_2=\|1-P_A(u_n)u_n^*\|_2=\|u_n-P_A(u_n)\|_2\to 0.
$$
Therefore (since $\Omega$ is continuous), $\sigma(u_nAu_n^*)=u_n\Omega(P_A(u_n^*))=\Omega(u_n P_A(u_n^*))\to 1$.
Finally, $\sigma$ is a cross section: if $b=uAu^*$,
$$
\sigma(b)A\sigma(b)^*=u\Omega(P_A(u^*)) A \Omega(P_A(u^*))^* u^*=uAu^*,
$$
because the fact that $P_A(u^*)$ commutes with $A$ implies that also $\Omega(P_A(u^*))$ commutes with $A$.
\end{proof}

We finish this section by returning to the case of an arbitrary self-adjoint operator $A$. We shall prove that  the projection
$P_A$ verifies that $\|P_A(x)\|\le \|x\|$.
\begin{prop}
The projection $P_A$ is $\| \ \|$-contractive.
\end{prop}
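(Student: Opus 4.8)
The plan is to compute $P_A$ explicitly from the spectral data of $A$ and then to recognize it, restricted to each spectral block, as a finite average of unitary conjugations, each of which is an isometry for the operator norm.

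First I would identify $P_A$. Since $A=A^*$, its point spectrum is an at most countable set $\{\lambda_n\}$ with pairwise orthogonal eigenprojections $q_n=E_A(\{\lambda_n\})$. A direct computation with the spectral decomposition of $A$ shows that an operator $x\in\be$ commutes with $A$ precisely when $x=\sum_n q_nxq_n$ (the series converging in $\be$, since the summands are pairwise $Tr$-orthogonal and $\sum_n\|q_nxq_n\|_2^2\le\|x\|_2^2$); a standard orthogonality check then gives that the $Tr$-orthogonal projection $P_A\colon\beah\to\g=\{x\in\beah:xA=Ax\}$ is $P_A(x)=\sum_n q_nxq_n$. This same formula extends $P_A$ to all of $\B(\h)$: the vectors $q_nxq_n\xi$ lie in the pairwise orthogonal subspaces $q_n\h$, so $\sum_n\|q_nxq_n\xi\|^2\le\|x\|^2\|\xi\|^2$, the series $\sum_n q_nxq_n\xi$ converges in $\h$, and $P_A(x):=\lim_N\sum_{n\le N}q_nxq_n$ (limit in the strong operator topology) is a well-defined bounded operator. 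It is the contractivity of this extension that must be checked.

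Next I would prove the bound on each finite block. Given a finite index set $F$, put $q_F=\sum_{n\in F}q_n$; then $\{q_n\}_{n\in F}$ is a partition of the identity operator of the Hilbert space $q_F\h$, and on $\B(q_F\h)$ the pinching $y\mapsto\sum_{n\in F}q_nyq_n$ equals the finite average $2^{-|F|}\sum_\varepsilon u_\varepsilon y u_\varepsilon^*$ over the $2^{|F|}$ unitaries $u_\varepsilon=\sum_{n\in F}\varepsilon_n q_n$ of $q_F\h$ ($\varepsilon_n=\pm1$), hence is an operator-norm contraction. Applying this to $y=q_Fxq_F$ yields
$$
\Big\|\sum_{n\in F}q_nxq_n\Big\|=\Big\|\sum_{n\in F}q_n(q_Fxq_F)q_n\Big\|\le\|q_Fxq_F\|\le\|x\|.
$$
Finally, letting $F$ exhaust the full index set, $\sum_{n\in F}q_nxq_n\to P_A(x)$ strongly, and since the operator norm is strongly lower semicontinuous this gives $\|P_A(x)\|\le\|x\|$.

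I do not expect a genuine obstacle here; the argument is elementary once $P_A$ is written in the pinching form. The one point that requires attention is precisely that, when $A$ has a nontrivial continuous part, the eigenprojections $q_n$ do \emph{not} sum to $1$ — which is exactly why the middle step must first compress to $q_F\h$, where the $q_n$ ($n\in F$) genuinely form a resolution of the identity, before invoking the averaging trick; the remaining work is the routine bookkeeping of the strong-operator convergence and of the orthogonality identity used to identify $P_A$.
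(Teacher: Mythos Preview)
Your argument is correct and structurally parallel to the paper's, but the two proofs choose different approximating pinchings. The paper never writes down a closed formula for $P_A$; instead, for each finite Borel partition $\Pi=\{\Delta_1,\dots,\Delta_n\}$ of $\sigma(A)$ it forms $E_\Pi(x)=\sum_i p_ixp_i$ with $p_i=E_A(\Delta_i)$, asserts $\|E_\Pi(x)\|\le\|x\|$ as evident, extracts a weakly convergent subnet of $\{E_\Pi\}$ in ${\cal B}(\be)$, identifies the limit as $P_A$, and then passes the norm bound through weak operator convergence. Your route fixes the explicit formula $P_A(x)=\sum_n q_nxq_n$ from the start and approximates by finite partial sums over eigenprojections; the sign--averaging identity $2^{-|F|}\sum_\varepsilon u_\varepsilon(\cdot)u_\varepsilon^*$ supplies an honest reason for the finite contractivity that the paper leaves implicit, and you use strong rather than weak convergence for the limit. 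The trade--off is your identification step: the claim that $x\in\be$ commutes with $A$ iff $x=\sum_n q_nxq_n$ is not merely a ``direct computation'' when $A$ has a continuous part---it requires the fact (established earlier in the paper, in the proof that $(T\oA)_A$ is closed only for finite spectrum) that no nonzero Hilbert--Schmidt operator can commute with $A_c$. The paper's partition approach sidesteps this entirely, since its projections $p_i$ always sum to $1$; your approach is more explicit but leans on that extra lemma.
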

\begin{proof}
We shall prove this result by giving an alternate construction of $P_A$. Let $\Pi$ be a finite partition of the spectrum of $A$ by Borel sets $\{\Delta_1,\dots , \Delta_{n(\Pi)}\}$. Denote by $p_i$ the spectral projection of $A$ corresponding to the set $\Delta_i$, and by
$$
E_\Pi(x)=\sum_{i=1}^{n(\Pi)}p_i x p_i, \ \ x\in\be.
$$
Consider the partial order $\ge$ on finite partitions given by refinement. Then $\{E_\Pi\}$ is a net of contractions acting in the Hilbert space $\be$. Therefore it has a weak operator convergent subnet, which for simplicity we shall denote again by $\{E_\Pi\}$. Therefore there exists a contraction $F$ acting on $\be$ such that
$$
Tr(y^*E_\Pi(x))\to Tr(y^*F(x)) , \ \ \hbox{ for all } x,y\in\be.
$$
We claim that $F$ is the orthogonal projection onto $\g_p$, i.e. $F=P_A$. 
First note that if $x\in \beah$ commutes with $A$, then it commutes with its spectral projections and therefore 
$$
E_\Pi(x)=\sum_{i=1}^{n(\Pi)}p_i x p_i=x\sum_{i=1}^{n(\Pi)}p_i=x,
$$
therefore $F(x)=x$. Let $p$ be a spectral projection of $A$. Since the index set $\{\Pi\}$ of the convergent net $\{E_\Pi\}$ is co-final, there exists a partition $\Pi_0$ (an index of the net) which is finer than $\{p,1-p\}$. Therefore for any  partition $\Pi=\{p_1,\dots,p_{n(\Pi)}\}\ge \Pi_0$, the projections $p_i$ are either sub-projections of $p$ or $1-p$, and thus $pp_i=p_ip$ equals $p_i$ or $0$. Then 
$$
p E_\Pi(x)=\sum_{pp_i\ne 0}p_ixp_i=E_\Pi(x) p.
$$
It follows that for any $x$, and $\Pi\ge \Pi_0$, $A$ commutes with $E_\Pi(x)$. Then $A$ commutes with $F(x)$. It follows that $F$ is an idempotent operator acting in $\beah$, whose range is $\g_{A}$. Apparently, all the operators $E_\Pi$ are symmetric with respect to the trace inner product, therefore $F$ is symmetric. Then $F$ is the $Tr$-orthogonal projection onto $\g_{A}$, i.e. $F=P_A$. This description of $P_A$ allows us to prove that it is also $\| \ \|$-contractive. 
Indeed, note that for any fixed $x$, the net of operators $\{E_\Pi(x)\}$ converges to $P_A(x)$ in the weak operator topology: if $\xi,\eta\in\h$, denote by $\xi\otimes \eta$ the rank one operator given by $\xi\otimes \eta (\alpha)=<\alpha,\eta>\xi$,
\begin{eqnarray}
<E_\Pi(x)\xi,\eta> & =& Tr((E_\Pi(x)\xi)\otimes \eta)=Tr(E_\Pi(x) \xi\otimes \eta)\nonumber\\
 &=&-<\xi\otimes\eta, E_\pi(x)>\to -<\xi\otimes\eta, P_A(x)>=<P_A(x)\xi,\eta>.\nonumber
\end{eqnarray}
On the other hand, the operators $E_\Pi(x)$ clearly verify $\|E_\Pi(x)\|\le \|x\|$. Then $\|P_A(x)\|\le \|x\|$.
\end{proof}

\section{Open problem: geodesics joining given endpoints}
In this section we consider the problem of finding a minimal curve in $\oo$ joining two given endpoints. First let us remark that the answer is positive, at least locally, for the case $p=2$. In this case $\oo$ is a Riemann-Hilbert manifold, and therefore there exists a uniform radius $R>0$ such that any two elements $x_0,x_1\in\oo$ with $d_2(x_0,x_1)<R$ are joined by a unique minimal geodesic. 

For $p\ge 2$, it was shown  in \cite{odospe} that if $\oo=\oo_P=\{uPu^*:u\in \upe\}$, with $P$ an infinite self-adjoint projection of ${\cal B}({\cal H})$, then any two elements $P_0,P_1$ are joined by a minimal geodesic, which is unique if $\|P_0-P_1\|<1$.

Let us state the following partial answer to this question.

\begin{prop}
Suppose that  $\g$ is finite dimensional. If $x_0,x_1\in \oo$ satisfy $d_p(x_0,x_1)<\pi/4$, then there exists a unique minimal curve joining them, which is of the form $\delta(t)=e^{tz}\cdot x_0$, with $z\in \g_{x_0}^{\perp_p}$.
\end{prop}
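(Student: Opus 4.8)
The plan is to transfer the minimization to the group $\upe$, where the tools of Section~3 (isometric liftings, convexity of $d_p^p$) are available, and to use the finite dimensionality of $\g$ only to guarantee that a minimizer exists there. First I would prove that
$$d(x_0,x_1)=m:=\inf\{\,d_p(1,u):u\in\upe,\ u\cdot x_0=x_1\,\}.$$
For $m\le d(x_0,x_1)$: by Proposition~\ref{lift} every piecewise $C^1$ curve $\gamma$ in $\oo$ from $x_0$ to $x_1$ has an isometric lift $\beta\subset\upe$ with $\beta(0)=1$, $\beta(1)\cdot x_0=x_1$ and $L_p(\beta)=L(\gamma)$, so $m\le d_p(1,\beta(1))\le L(\gamma)$. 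For $d(x_0,x_1)\le m$: if $u\cdot x_0=x_1$ and $d_p(1,u)<\pi$, Theorem~\ref{minimalidadunitarios} gives $z\in\bpe_{ah}$ with $e^z=u$ and $\|z\|_p=d_p(1,u)$, and since the action of $\upe$ is isometric on tangent spaces, $t\mapsto e^{tz}\cdot x_0$ joins $x_0$ to $x_1$ with length at most $\|z\|_p$. In particular $m=d(x_0,x_1)<\pi/4$.

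Next I would show the infimum is attained. Choose $u_n$ with $u_n\cdot x_0=x_1$ and $d_p(1,u_n)\to m$, so $d_p(1,u_n)<\pi/4$ for large $n$. Fixing one such index $n_0$ and writing $u_n=u_{n_0}g_n$ with $g_n\in G_{x_0}$ (possible since $u_n\cdot x_0=u_{n_0}\cdot x_0$), invariance of $d_p$ gives $d_p(1,g_n)=d_p(u_{n_0},u_n)\le d_p(u_{n_0},1)+d_p(1,u_n)<\pi/2$, so $\{g_n\}$ is bounded in $G_{x_0}$. Here one uses that $G_{x_0}$ is a \emph{finite dimensional} Lie group, closed in the complete space $(\upe,d_p)$, which moreover is locally geodesically convex (as recalled at the start of Section~4, since $G_{x_0}$ is locally exponential); thus $d_p$ restricted to $G_{x_0}$ coincides near $1$ with its intrinsic rectifiable metric, $(G_{x_0},d_p)$ is a complete locally compact length space, and by Hopf--Rinow closed bounded subsets of $G_{x_0}$ are compact. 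Passing to a subsequence $g_n\to g$, the element $u:=u_{n_0}g$ satisfies $u\cdot x_0=x_1$ and $d_p(1,u)=m$ by continuity of $d_p$; since $m<\pi$, Theorem~\ref{minimalidadunitarios} lets me write $u=e^z$ with $z\in\bpe_{ah}$ the unique element of operator norm $<\pi$, and $\|z\|_p=m<\pi/4$.

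It remains to identify $\delta$. For every $y\in\g_{x_0}$ the curve $s\mapsto ue^{sy}=e^ze^{sy}$ stays in $\{v\in\upe:v\cdot x_0=x_1\}$, so $f(s)=d_p(1,e^ze^{sy})^p$ has a minimum at $s=0$; since $\|z\|<\pi$, $f$ is differentiable near $0$, and the computation in the proof of Theorem~\ref{teoconvexidad1}, applied to the curve $s\mapsto e^ze^{sy}$ (whose logarithm at $s=0$ is $z$), yields $f'(0)=(-1)^{p/2}p\,Tr(z^{p-1}y)$. Hence $Tr(z^{p-1}y)=0$ for every $y\in\g_{x_0}$, which by Lemma~\ref{perpep} means precisely that $z\in\g_{x_0}^{\perp_p}$ is a minimal lifting of $X:=(d\pi_{x_0})_1(z)$. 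Now the initial-value theorem of Section~4 applies: as $z\in\g_{x_0}^{\perp_p}$ and $\|z\|_p<\pi/4$, the curve $\delta(t)=e^{tz}\cdot x_0$ has minimal length $\|z\|_p=m$ on $[0,1]$ and is the unique curve of that length joining $x_0=\delta(0)$ to $x_1=e^z\cdot x_0=\delta(1)$; since $m=d(x_0,x_1)$, $\delta$ is the (unique) minimal curve joining $x_0$ and $x_1$, of the required form.

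The routine parts are the first and third steps, which only assemble results of Sections~3 and~4; the main obstacle is the compactness in the middle step. In an infinite dimensional Banach--Lie group bounded sets are not precompact, so one must genuinely exploit that $\g$ is finite dimensional; and because $G_{x_0}$ is not complemented and need not be compact, one has to be careful to run the Hopf--Rinow argument with the intrinsic metric of $G_{x_0}$, which agrees with $d_p$ near $1$ precisely because $G_{x_0}$ is locally exponential.
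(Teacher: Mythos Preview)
Your proof is correct and follows essentially the same strategy as the paper: lift to $\upe$, use the finite dimensionality of $\g_{x_0}$ to extract a convergent subsequence and obtain a nearest point $e^z$ in the fiber $u_1G_{x_0}$, then identify $z\in\g_{x_0}^{\perp_p}$ via the first-order condition $Tr(z^{p-1}y)=0$. You are somewhat more explicit than the paper in establishing the equality $d(x_0,x_1)=m$ and in invoking the initial-value theorem of Section~4 for the final minimality and uniqueness claims, whereas the paper handles the compactness step more tersely and deduces uniqueness of the minimizer $w_0$ directly from the strict convexity of $f_p$.
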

\begin{proof}
Since $d_p(x_0,x_1)<\pi/4$, there exists a smooth curve $\gamma(t)\in \oo$, $t\in[0,1]$ such that $\gamma(i)=x_i$, $i=0,1$, and $L(\gamma)<\pi/4$. Then by Proposition \ref{lift}, there exists a smooth isometric lift $\Gamma(t)\in\upe$ with $\Gamma(0)=1$, $\Gamma(1)=u_1$ and $L_p(\Gamma)=L(\gamma)$. Note that $u_1\cdot x_0=x_1$. Denote by $d$ the distance 
$$
d=d_p(1, u_1 G_{x_0})=\inf\{d_p(1,u): w\in  u_1 G_{x_0}\}.
$$   
Let $w_n$ be a sequence in $u_1 G_{x_0}$ such that $d_p(1,w_n)\to d$. Since $\g$ is finite dimensional, there exists a convergent subsequence, which we still denote by $w_n$, $w_n\to w_0$. We may also suppose that $d_p(1,w_n)<\pi/4$ for all $n$. Note that $d_p(1,w_0)\le d_p(1,u_1)<\pi/4$. In other words, $w_0$ achieves the distance between $q$ and $u_1G_{x_0}$. By the convexity property of $d_p$, it is unique: if $v_0$ is another element with $d=d_p(1,v_0)$, and $\mu(t)$ is the geodesic joining $w_0$ and $v_0$, since the map $f_p(t)=d_p(1,\mu(t))^p$ is strictly convex, it follows that $v_0=w_0$. Clearly there exists $z\in\bpe_{ah}$ such that $\|z\|<\pi/4$ and $\mu(t)=e^{tz}$ is the minimal curve in $\upe$ joining $1$ and $w_0$. Then it is apparent that $\delta(t)=e^{tz}\cdot x_0$ is the unique minimal curve joining $x_0$ and $x_1$ in $\oo$. As shown before, the fact that $w_0$ is a critical point of the distance function, implies that $z\in\g_{x_0}^{\perp_p}$.
\end{proof}

\bigskip

\noindent
Esteban Andruchow and Gabriel Larotonda\\
Instituto de Ciencias \\
Universidad Nacional de Gral. Sarmiento \\
J. M. Gutierrez 1150 \\
(1613) Los Polvorines \\
Argentina  \\
e-mails: eandruch@ungs.edu.ar, glaroton@ungs.edu.ar

\bigskip

\noindent
L\'azaro Recht \\
Departamento de Matem\'atica P y A \\
Universidad Sim\'on Bol\'\i var \\
Apartado 89000\\
Caracas 1080A \\
Venezuela  \\
e-mail: recht@usb.ve

\end{document}